\documentclass[11pt]{article}

\usepackage[margin=1in]{geometry}
\usepackage{amsmath,amssymb,amsthm,mathtools,bm}
\usepackage{enumitem}
\usepackage{booktabs,tabularx}
\usepackage{microtype}
\usepackage{xcolor}
\usepackage[colorlinks=true,linkcolor=blue!55!black,citecolor=blue!55!black,urlcolor=blue!55!black]{hyperref}

\newtheorem{theorem}{Theorem}[section]
\newtheorem{proposition}[theorem]{Proposition}
\newtheorem{lemma}[theorem]{Lemma}

\theoremstyle{definition}

\newtheorem{remark}[theorem]{Remark}

\newcommand{\R}{\mathbb R}
\newcommand{\N}{\mathbb N}
\newcommand{\E}{\mathbb E}
\newcommand{\Pp}{\mathbb P}
\newcommand{\cN}{\mathcal N}
\newcommand{\cP}{\mathcal P}
\newcommand{\cQ}{\mathcal Q}
\newcommand{\cH}{\mathcal H}
\newcommand{\cS}{\mathcal S}
\newcommand{\Var}{\operatorname{Var}}
\newcommand{\Cov}{\operatorname{Cov}}
\newcommand{\tr}{\operatorname{tr}}
\newcommand{\Aff}{\operatorname{Aff}}
\newcommand{\TV}{\operatorname{TV}}
\newcommand{\NonG}{\operatorname{NG}}
\newcommand{\indep}{\mathrel{\perp\!\!\!\perp}}

\newcommand{\dirf}{\rightarrow}
\newcommand{\dirr}{\leftarrow}
\newcommand{\dd}{\,\mathrm d}
\newcommand{\one}{\mathbf 1}
\newcommand{\eps}{\varepsilon}
\newcommand{\He}{\operatorname{He}}
\DeclarePairedDelimiter{\norm}{\lVert}{\rVert}
\DeclarePairedDelimiter{\abs}{\lvert}{\rvert}
\DeclarePairedDelimiter{\ip}{\langle}{\rangle}
\newcolumntype{L}[1]{>{\raggedright\arraybackslash}p{#1}}
\newcolumntype{Y}{>{\raggedright\arraybackslash}X}

\title{How Many Samples Are Needed to Determine Causal Direction?
Sharp Minimax Bounds for Bivariate LiNGAM}
\author{Jikai Jin}
\date{August 2026}

\begin{document}
\maketitle

\begin{abstract}
We study how many observations are needed to determine the causal direction
between two linearly related variables.  Classical LiNGAM theory shows that
independent non-Gaussian disturbances identify the direction, but does not
quantify the difficulty when the causal effect is weak or the disturbances
are nearly Gaussian.  Let $\beta$ bound the absolute structural coefficient
from below, let $\nu$ measure each standardized disturbance's distance from
Gaussianity, and let the disturbance scales lie in
$[\underline\sigma,\overline\sigma]$.  We prove the sharp local minimax law
\[
 N_2^\star(\beta,\nu,\delta)
 \asymp
 \frac{\log(1/\delta)}
 {d_\beta^2+\beta^2\nu^2},
 \qquad
 d_\beta=
 \left[\beta^2-
 \left(1-\frac{\underline\sigma^2}{\overline\sigma^2}\right)\right]_+.
\]
Previous theory established population identifiability or assumed a fixed
separation between the two directions. By contrast, we establish the sharp sample complexity as a joint function of edge
strength, distance from Gaussianity, and scale uncertainty, and characterize
when identification comes from non-Gaussian dependence or from covariance
alone. The proof was independently generated with GPT-5.6 Sol in Codex's Ultra mode during a two-hour session. The human author supplied the prompt and was responsible only forchecking the proof and revising and polishing the manuscript.
\end{abstract}

\tableofcontents

\section{Introduction}

When two quantities move together, which one causes the other?  Even for two
variables joined by a linear relation, the observational data support two
regressions,
\[
 Y=aX+\varepsilon
 \qquad\text{or}\qquad
 X=bY+\widetilde\varepsilon.
\]
Ordinary least squares makes the residual uncorrelated with the regressor in
either direction.  The linear non-Gaussian acyclic model (LiNGAM)
distinguishes the directions by asking for independence: in the causal
direction the regressor is independent of the structural disturbance,
whereas in the reverse direction the regressor and residual are both
mixtures of the original disturbances
\cite{ShimizuEtAl2006,ShimizuEtAl2011,HyvarinenSmith2013}.

This asymmetry disappears at the Gaussian boundary.  For an independent
standardized Gaussian pair, every orthogonal rotation is again independent,
so both OLS directions pass the independence test.  A classical
characterization theorem shows that, when both rotation coefficients are
nonzero, independence of the rotated coordinates forces both standardized
sources to be Gaussian \cite{KaganLinnikRao1973}.  Error-scale restrictions
provide a second route to identification: under equal error variances, for
example, the covariance matrix can determine the direction even in the Gaussian limit
\cite{PetersBuhlmann2014,LohBuhlmann2014}.

Classical LiNGAM theory therefore explains when the direction is identifiable
at the population level, but not how the required sample size diverges near
the boundary.  Stability estimates for Gaussian characterization and local
analyses along prescribed near-Gaussian paths quantify related phenomena
\cite{Gabovich1974,Gabovich1981,SokolMaathuisFalkeborg2014}, while uniform
recovery fails without a positive separation
\cite{GeninMayoWilson2024}.  Recent finite-sample LiNGAM bounds instead
depend on a fixed residual-dependence or population-objective gap
\cite{OhHanPark2025,LaplanteAmbroiseHumbert2026}.  We derive this gap
uniformly from the edge strength, source non-Gaussianity, and admissible error
scales, and prove the corresponding sharp minimax law.

Two signals determine the difficulty.  Let $\beta$ be the smallest allowed
absolute edge coefficient and let $\nu$ denote the prescribed
non-Gaussianity margin of the standardized sources.  Fitting the regression
backward rotates the two independent sources.  The rotation mixes them by
order $\beta$; uniformly over sources satisfying the margin $\nu$, the
resulting dependence signal is bounded below by a constant multiple of
$\beta\nu$.  The error scales contribute a separate covariance signal
\[
 d_\beta:=\bigl[\beta^2-(1-\rho)\bigr]_+,
 \qquad
 \rho:=\frac{\underline\sigma^2}{\overline\sigma^2}.
\]
Here $\rho$ is the ratio of the smallest to the largest allowed noise
variance.  The signal $d_\beta$ vanishes exactly when the forward and reverse
covariance classes overlap.

For fixed tail, scale, and coefficient bounds, and for sufficiently small
$\beta$ and $\nu$, the number of observations needed to recover the direction
with error probability at most $\delta$ is
\[
 N_2^\star(\beta,\nu,\delta)
 \asymp
 \frac{\log(1/\delta)}{d_\beta^2+\beta^2\nu^2}.
\]
A signal of size $s$ requires order $s^{-2}$ observations, while confidence
$1-\delta$ contributes the factor $\log(1/\delta)$.  The test uses the
stronger of the covariance signal $d_\beta$ and the non-Gaussian signal
$\beta\nu$; their squared maximum is equivalent, up to constants, to the
sum in the denominator.  When the covariance classes overlap, the rate is
governed entirely by $\beta\nu$.  Under equal error variances,
$d_\beta=\beta^2$, so covariance remains informative as the sources approach
Gaussianity.

We define the source margin by a Gaussian-weighted distance between
characteristic functions, and the result holds uniformly over a
nonparametric sub-Gaussian class that includes symmetric sources, sources
whose first four moments match those of a Gaussian, and laws with atoms.  The
key analytic result is a quantitative
rotation theorem showing uniformly that the wrong-direction dependence is
at least a constant multiple of $\beta\nu$.  A test that uses the covariance
comparison when $d_\beta$ is larger and a robust independence score otherwise
attains the displayed rate; a matching forward--reverse construction shows
that no test can improve the resulting exponent.  This bivariate law is a
first step toward expressing the sample complexity of general LiNGAM
directly in terms of edge strengths, source non-Gaussianity, and error-scale
uncertainty.

\section{Literature review and the precise open gap}
\label{sec:literature}

Before formalizing the statistical experiment, we locate the result relative
to the identification, ICA, and finite-sample LiNGAM literature and isolate
the margin-dependent gap addressed here.

\paragraph{Population identifiability and algorithms.}
Shimizu, Hoyer, Hyv\"arinen and Kerminen introduced LiNGAM and used ICA to
identify a linear acyclic SEM with independent non-Gaussian disturbances
\cite{ShimizuEtAl2006}.  Their finite-sample discussion is algorithmic and
empirical, not a uniform error bound.  DirectLiNGAM later replaced the ICA
search with a sequential exogenous-variable procedure; Lemma~1 and the main
correctness claim are explicitly stated for infinite sample size
\cite{ShimizuEtAl2011}.
Hyv\"arinen and Smith studied the bivariate direction directly through
likelihood ratios and skewness/kurtosis approximations
\cite[Theorems~1--2]{HyvarinenSmith2013}.  Their standardized pairwise model
is the closest classical formulation to ours, but its cumulant criteria do
not cover sources for which both skewness and excess kurtosis vanish.  These
works establish population criteria and useful algorithms, but do not provide
a uniform finite-sample minimax law as both the edge and non-Gaussianity
vanish.

The qualitative identification step ultimately rests on the
Darmois--Skitovich characterization: if two nondegenerate linear forms of
independent variables are independent, every source appearing in both forms
is Gaussian \cite{KaganLinnikRao1973,FeldmanGraczyk2010}.  Stability versions
of characterization theorems were developed by Gabovich
\cite{Gabovich1974,Gabovich1981}, but those bounds use different probability
metrics and do not identify the sharp testing exponent for the composite
LiNGAM classes studied here.

\paragraph{Why an explicit margin is necessary.}
Sokol, Maathuis and Falkeborg quantified near-Gaussian ICA along a particular
contaminated-Gaussian path.  They found the local $n^{-1/2}$ contamination
boundary and noted consequences for linear causal models
\cite[Theorems~4.3--4.5]{SokolMaathuisFalkeborg2014}.  Their result is
asymptotic, tied to a specified contamination family, and concerns
mixing-matrix identifiability; their conclusion explicitly lists
finite-sample rather than limiting bounds as an open direction.  Genin and
Mayo-Wilson proved that uniform consistency is impossible on unseparated
LiNGAM model classes; their two-variable construction sends an edge
coefficient to zero, and their journal treatment formalizes the obstruction
in Theorem~9 \cite{GeninMayoWilson2024}.  That impossibility is exactly why a
quantitative margin such as $\NonG_w\ge\nu$ and a fixed lower edge bound are
logically indispensable: pointwise non-Gaussianity and a merely nonzero edge
are not uniform statistical assumptions.

\paragraph{Related quantitative ICA and high-dimensional LiNGAM results.}
Auddy and Yuan obtained minimax and computational results for
high-dimensional ICA.  Their model class imposes an $(8+\epsilon)$-moment
bound and a fixed two-sided lower and upper bound on every source excess
kurtosis, and their Theorem~2.1 targets mixing-direction estimation
\cite{AuddyYuan2025}.  This is not causal direction over all sub-Gaussian laws
separated from Gaussian in characteristic-function distance.  Oh, Han and
Park derived a high-dimensional LiNGAM structure-recovery rate of order
$d_{\rm in}\log(p/d_{\rm in})$ \cite{OhHanPark2025}.  Their Assumption~4
introduces an oracle residual distance-covariance gap $\tau_1>0$, and
Theorem~5 gives a tail containing
$\exp\{-c\epsilon_0^4n/\lambda^{36}\}$ for
$\epsilon_0<\tau_1/2$. Their Corollary~7 calls the dimension dependence optimal
when $\tau_1$ and the covariance conditioning constant $\lambda$ are fixed. However,
its lower bound does not track the deterioration as a source-level
non-Gaussianity margin tends to zero.  Thus the headline dimension rate does
not resolve the local problem here.

Recent work gives other useful results without closing this gap.  Li et
al. introduced a LiNGAM-specific kernel independence test and proved
asymptotic null and alternative laws \cite{LiEtAl2026}; it does not give a
two-direction minimax lower bound or a uniform rate as a source-level
non-Gaussianity margin tends to zero.  Laplante, Ambroise and Humbert proved population
identification and empirical convergence for a Wasserstein-to-Gaussian
criterion \cite{LaplanteAmbroiseHumbert2026}.  Their causal-order error bound
is expressed through an oracle objective gap $\Gamma$. Specifically, their Theorem~36 has a leading
$d^{5/4}n^{-1/4}$ uniform-objective term divided by $\Gamma$. However, no
lower bound relates $\Gamma$ sharply to weak edges and a source-level
near-Gaussian parameter.

Table~\ref{tab:literature} summarizes the logical distinctions comparing these related works to our central result.
\begin{table}[t]
\centering
\footnotesize
\caption{Closest primary results and the remaining ingredients.}
\vspace{5pt}
\label{tab:literature}
\begin{tabularx}{\textwidth}{@{}L{0.18\textwidth}Y L{0.18\textwidth}Y@{}}
\toprule
Work & Target and guarantee & Separation parameter & Relationship to our theorem\\
\midrule
Shimizu et al. (2006, 2011) & Population identification and algorithms & Qualitative non-Gaussianity & No uniform finite-sample rate\\
Hyv\"arinen--Smith (2013) & Pairwise likelihood/cumulant scores & Density or selected cumulants & No uniform power/lower bound\\
Sokol et al. (2014) & Near-Gaussian ICA local asymptotics & Particular contamination path & No finite-sample direction theorem\\
Genin--Mayo-Wilson (2024) & Uniform-recovery impossibility & No positive separation & Qualitative obstruction, no rate\\
Auddy--Yuan (2025) & Finite-sample/minimax ICA estimation & Fixed nonzero kurtosis & Different target and source class\\
Oh--Han--Park (2025) & Finite-sample graph recovery & Oracle $\tau_1$ & No primitive-gap lower bound\\
Li et al. (2026) & Asymptotic independence test & No uniform power margin & No finite-sample minimax theorem\\
Laplante et al. (2026) & Finite-sample causal-order upper bound & Oracle $\Gamma$ & No matching minimax lower bound; polynomial confidence conversion\\
This theorem & Finite-sample bivariate direction minimax law & $(\beta,\nu,d_\beta)$ & Matching upper and lower bounds\\
\bottomrule
\end{tabularx}
\end{table}

\paragraph{The equal-variance scenario.}
Gaussian SEMs become identifiable under equal error variances
\cite{PetersBuhlmann2014,LohBuhlmann2014}.  This explains why our answer need
not diverge as $\nu\downarrow0$ when
$\underline\sigma=\overline\sigma$: covariance already contains directional
information.  Prior equal-variance results and non-Gaussian LiNGAM results,
however, do not determine the sharp transition when the permitted scale
interval, $\beta$, and $\nu$ vary jointly.

\paragraph{Our contribution.}
We develop a sharp local complexity theory for LiNGAM direction in terms of
the primitive model parameters: edge strength, source non-Gaussianity, and
uncertainty in the error scales.  The central advance is a quantitative
modulus of LiNGAM identifiability.  We prove that fitting the model in the
wrong direction creates an observable dependence of order $\beta\nu$,
uniformly over a nonparametric sub-Gaussian source class.  This converts the
qualitative Darmois--Skitovich characterization into a finite-sample
separation stated directly in terms of the source distributions, without
introducing an unspecified population gap.  Exact covariance geometry
reveals a second signal, $d_\beta$, and the matching upper and lower bounds
show that the full testing exponent is
\[
 d_\beta^2+\beta^2\nu^2.
\]
The resulting phase law explains when direction is learned from
non-Gaussianity, when it is already visible in second moments, and how the
two mechanisms meet near the Gaussian boundary.  Since parent--child
orientation is the elementary local decision underlying a general LiNGAM,
this primitive-parameter characterization provides a first step toward a
sample-complexity theory for general LiNGAM structure recovery.

\section{Problem formulation and main theorem}
\label{sec:problem}

We now formalize the source class and the two directional experiments before
stating the minimax theorem.
Fix constants
\[
 K>K_{\rm G}:=\sqrt{8/3},\qquad
 0<\underline\sigma\le\overline\sigma<\infty,
 \qquad 0<\overline a<\infty.
\tag{3.1}\label{eq:fixedconstants}
\]
For a real random variable $Z$, use the Orlicz norm
\[
 \norm{Z}_{\psi_2}:=\inf\left\{r>0:
   \E\exp(Z^2/r^2)\le2\right\}.
\tag{3.2}\label{eq:psi2}
\]
Let
\[
 w(t):=\pi^{-1/2}e^{-t^2},\qquad g(t):=e^{-t^2/2},
\tag{3.3}\label{eq:w-g}
\]
and, for a centered variance-one $Z$ with characteristic function $f_Z$, set
\[
 \NonG_w(Z):=\left(\int_\R
   \abs{f_Z(t)-g(t)}^2w(t)\dd t\right)^{1/2}.
\tag{3.4}\label{eq:NG}
\]
For $\nu>0$, define the source class
\[
 \cQ(K,\nu):=\left\{\mathcal L(Z):
   \E Z=0,\ \E Z^2=1,\ \norm Z_{\psi_2}\le K,
   \ \NonG_w(Z)\ge\nu\right\}.
\tag{3.5}\label{eq:Q}
\]

For $0<\beta\le\overline a$, the forward class $\cP_{\dirf}(\beta,\nu)$
consists of the laws of
\[
 X=\sigma_1Z_1,
 \qquad Y=aX+\sigma_2Z_2,
\tag{3.6}\label{eq:forward}
\]
where
\[
 \beta\le\abs a\le\overline a,\qquad
 \underline\sigma\le\sigma_1,\sigma_2\le\overline\sigma,
 \qquad Z_1\indep Z_2,\quad
 \mathcal L(Z_j)\in\cQ(K,\nu).
\tag{3.7}\label{eq:forwardconstraints}
\]
The reverse class $\cP_{\dirr}(\beta,\nu)$ consists of the laws of
\[
 Y=\tau_2U_2,
 \qquad X=bY+\tau_1U_1,
\tag{3.8}\label{eq:reverse}
\]
under the analogous conditions
\[
 \beta\le\abs b\le\overline a,\qquad
 \underline\sigma\le\tau_1,\tau_2\le\overline\sigma,
 \qquad U_1\indep U_2,\quad
 \mathcal L(U_j)\in\cQ(K,\nu).
\tag{3.9}\label{eq:reverseconstraints}
\]
No density, symmetry, or nonvanishing-cumulant assumption is imposed.

A decision rule at sample size $n$ is a measurable, possibly randomized map
$\widehat d_n:(\R^2)^n\to\{\dirf,\dirr\}$.  Its worst directional error is
\[
 \mathcal R_n(\widehat d_n;\beta,\nu)
 :=\max\left\{
 \sup_{P\in\cP_{\dirf}}P^{\otimes n}(\widehat d_n=\dirr),
 \sup_{P\in\cP_{\dirr}}P^{\otimes n}(\widehat d_n=\dirf)
 \right\},
\tag{3.10}\label{eq:riskrule}
\]
where the dependence of the classes on $(\beta,\nu)$ is suppressed.  Put
\[
 \mathcal R_{2,n}^\star(\beta,\nu)
 :=\inf_{\widehat d_n}\mathcal R_n(\widehat d_n;\beta,\nu),
\qquad
 N_2^\star(\beta,\nu,\delta)
 :=\inf\{n\in\N:\mathcal R_{2,n}^\star(\beta,\nu)\le\delta\},
\tag{3.11}\label{eq:Nstar}
\]
with $\inf\varnothing=\infty$.

Write
\[
 L:=\underline\sigma^2,\qquad H:=\overline\sigma^2,
 \qquad \rho:=L/H,\qquad
 d_\beta:=\bigl[\beta^2-(1-\rho)\bigr]_+.
\tag{3.12}\label{eq:dbeta}
\]

\begin{theorem}[Sharp bivariate LiNGAM sample complexity]
\label{thm:main}
There exist constants
\[
 0<\beta_0\le\min\{1/2,\overline a\},\qquad
 \nu_0>0,\qquad \delta_0\in(0,1/2),
 \qquad 0<c<C<\infty,
\]
depending only on the fixed constants in \eqref{eq:fixedconstants}, such that
for every $0<\beta\le\beta_0$, $0<\nu\le\nu_0$, and
$0<\delta\le\delta_0$,
\[
 c\,\frac{\log(1/\delta)}{d_\beta^2+\beta^2\nu^2}
 \le N_2^\star(\beta,\nu,\delta)
 \le
 C\,\frac{\log(1/\delta)}{d_\beta^2+\beta^2\nu^2}.
\tag{3.13}\label{eq:mainrate}
\]
\end{theorem}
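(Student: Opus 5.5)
The proof splits into an upper bound, obtained from an explicit two-branch test, and a lower bound, obtained from a two-point forward--reverse construction. Both rest on one observation: $\max\{d_\beta,\beta\nu\}^2\asymp d_\beta^2+\beta^2\nu^2$. It therefore suffices to build one test with error $\exp(-c n d_\beta^2)$ and another with error $\exp(-c n\beta^2\nu^2)$, and to run whichever signal is larger.

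\emph{Covariance branch.} I will first describe the forward covariance set $\Sigma_{\dirf}=\{(\sigma_1^2,\ a\sigma_1^2,\ a^2\sigma_1^2+\sigma_2^2)\}$ and the reverse set $\Sigma_{\dirr}$, and show that their distance is of order $d_\beta$. The mechanism is as follows. Under the forward model the residual variance of $X$ given $Y$ equals $\sigma_1^2\sigma_2^2/(a^2\sigma_1^2+\sigma_2^2)$. Comparing it with $\Var(X)=\sigma_1^2$ gives a ratio $\le 1/(1+a^2\rho)$, roughly $1-\beta^2$ for small $\beta$, whereas the reverse model requires this ratio to be at least about $\rho$. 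The two sets are then separated by an explicit polynomial functional, something like $\Var(Y)-\Var(X)$ suitably normalized against both residual variances, whose gap is $\gtrsim d_\beta$. Because the variables are sub-Gaussian, empirical second moments concentrate at rate $\exp(-cn\epsilon^2)$ through Bernstein's inequality for sub-exponential variables. Thresholding this statistic at the midpoint then has error $\exp(-cnd_\beta^2)$.

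\emph{Non-Gaussian branch.} I will regress in both directions with OLS and compute a characteristic-function independence score using the weight $w$, namely
\[
\int\!\!\int\abs{\phi_{R,S}(s,t)-\phi_R(s)\phi_S(t)}^2w(s)w(t)\dd s\dd t
\]
for the standardized regressor/residual pair $(R,S)$. In the true direction this score is exactly zero at the population level. For the wrong direction I need the \textbf{quantitative rotation theorem}: after standardization the pair is a rotation of $(Z_1,Z_2)$ by an angle $\theta\asymp\beta$, possibly composed with a diagonal rescaling, and the score is $\gtrsim\theta^2\nu^2$. The score is a square, so its square root, i.e.\ the dependence signal, is $\gtrsim\beta\nu$. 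The plan for this theorem is to expand the cross-cumulant identity $\log\phi(s\cos\theta+t\sin\theta)+\dots$ to first order in $\theta$. The leading term is $\theta\,st\,[\psi_1'(s)/s-\psi_2'(t)/t]$-type, where $\psi_j=\log f_{Z_j}+u^2/2$. I then lower-bound its $w$-weighted $L^2$ norm by a constant multiple of $\NonG_w(Z_1)+\NonG_w(Z_2)$, using the Gaussian weight to keep the characteristic functions away from zero near the origin and the $\psi_2$ bound to control derivatives and the $O(\theta^2)$ remainder uniformly. This is the step I expect to be the main obstacle. The worry is that the two sources could cancel each other, but the $s\leftrightarrow t$ structure and the mean-zero, variance-one normalization should rule this out. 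I would attempt it first via a Fourier/Hermite expansion of $\psi_j$, and fall back on a compactness argument over the tight sub-Gaussian class, with a continuity modulus, if the explicit route fails. Given the theorem, the empirical score built from bounded complex exponentials concentrates by McDiarmid's inequality together with a plug-in perturbation bound for the OLS step. This yields error $\exp(-cn\beta^2\nu^2)$, with no loss from squaring, because we compare the square roots of the two directional scores.

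\emph{Lower bound.} I will choose $P_{\dirf}\in\cP_{\dirf}$ and $P_{\dirr}\in\cP_{\dirr}$ with $\mathrm{KL}$ or $\chi^2$ divergence $\lesssim d_\beta^2+\beta^2\nu^2$. The standard two-point bound $\mathcal R\ge\frac14\exp(-n\,\mathrm{KL})$ then gives $n\gtrsim\log(1/\delta)/(d_\beta^2+\beta^2\nu^2)$ for $\delta\le\delta_0$. For the construction, I take sources $Z=$ Gaussian perturbed smoothly, $f_Z=g(1+\nu h)$ with $h$ a fixed high-order Hermite-type perturbation, so that $\NonG_w=\nu$ exactly and the $\psi_2$ norm stays below $K$; this is where $K>K_G$ is used. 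The forward law is then a linear image of $(Z_1,Z_2)$. I pick reverse parameters $(b,\tau_1,\tau_2)$ whose covariance is within $O(d_\beta)$ of the forward covariance, which is possible by the geometry of the covariance branch, and exactly equal when $d_\beta=0$. The reverse sources are taken to be the rotated forward sources, corrected to be exactly independent. The divergence then splits into a Gaussian covariance part of order $d_\beta^2$ and a non-Gaussian mixing part of order $(\theta\nu)^2\asymp\beta^2\nu^2$, the latter because only the $\nu$-perturbation changes under rotation and it changes by $O(\theta)$.
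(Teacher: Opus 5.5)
There is a genuine gap in the non-Gaussian branch, at the step you yourself flagged as the main obstacle. Your score is the $L^2(w\otimes w)$ norm of the defect $D_\theta$, which is the zeroth-order version of the paper's Gaussian Sobolev norm. For that norm, the rotation inequality $\norm{D_\theta}_{L^2(w\otimes w)}\gtrsim\abs{\theta}\,\NonG_w$ is false uniformly over $\cQ(K,\nu)$.

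The obstruction is the degree-two Hermite direction. A common change of variance in both sources is rotation invariant, so it produces no linearized mixed defect; in Lemma~\ref{lem:linearcoercivity} the Gram eigenvalue $\lambda_2$ is $0$. The only thing that excludes this direction is unit variance, $h''(0)=0$, and that is not a continuous constraint in $L^2(w)$.

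Concretely, let $Z_1,Z_2$ be i.i.d.\ copies of $Z$, where $Z$ equals $\sqrt{1-\eps}\,G$ with probability $1-p$ and $\pm\sigma_W$ with probability $p/2$ each. Take $p\sigma_W^2=p+\eps(1-p)$, $p=\eps/L$ and $L=\tfrac{K^2}{2}\log(1/\eps)$. For small $\eps$:
\begin{itemize}
\item $Z$ is centered with variance one;
\item $\norm{Z}_{\psi_2}\le K$, because the slack $2-(1-2/K^2)^{-1/2}>0$ coming from $K>K_{\rm G}$ absorbs $pe^{\sigma_W^2/K^2}\lesssim \eps^{1/2}/L$;
\item $\NonG_w(Z)\asymp\eps$.
\end{itemize}
Write $f_Z=g_{1-\eps}+p\rho$ with $g_{1-\eps}(t)=e^{-(1-\eps)t^2/2}$. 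Then $D_0=0$, and the $p$-free part of $\partial_\theta D_\theta$ vanishes identically because $\cN(0,(1-\eps)I_2)$ is rotation invariant. Every remaining term carries a factor $p$ and at most one derivative of $\rho$, with $\abs{\rho'(t)}\le\sigma_W+\abs{t}$. Hence
\[
\norm{D_\theta}_{L^2(w\otimes w)}\lesssim\abs{\theta}\,p(1+\sigma_W)\lesssim\abs{\theta}\,\eps/\sqrt{\log(1/\eps)}.
\]
Placing these sources in a forward model with $a=\beta$ gives a wrong-direction score of order at most $\beta^2\nu^2/\log(1/\nu)$. So your test loses at least a factor $\log(1/\nu)$ and cannot give the sharp upper bound. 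Neither the Hermite route nor the compactness fallback can prove an inequality that is false.

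The missing idea is to measure the defect in a norm strong enough to see the standardization. Theorem~\ref{thm:maxwell} uses the third-order Gaussian Sobolev norm $\mathsf S_3$, in which $h\mapsto(h(0),h'(0),h''(0))$ is continuous. The constraints $h_j(0)=h_j'(0)=h_j''(0)=0$ then bound the Hermite degrees $0,1,2$ of $q_j=h_j/g$ by the degrees $\ge3$, where rotation is coercive with least Gram eigenvalue $\ge 3c^2s^2$. This is how $\mathsf S_3$ detects the example above: there $p\rho''$ has size $p\sigma_W^2\asymp\eps$ rather than $o(\eps)$. You would still need the remainder bound $\norm{R_\theta}_{\mathsf S_3}\lesssim\abs{sc}\eta^{4/3}$, plus a compactness argument using Darmois--Skitovich and the Maxwell equation away from the Gaussian pair.

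Changing the norm also breaks your concentration step. The $\mathsf S_3$ features $u^{\alpha_1}v^{\alpha_2}(iS)^{\gamma_1}(iT)^{\gamma_2}e^{i(uS+vT)}$ are unbounded, so McDiarmid no longer applies. The paper instead uses sample splitting with the Hilbert-space median-of-means of Proposition~\ref{prop:robustmean}, which needs only sixth moments.

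The rest of your plan is sound.
\begin{itemize}
\item Covariance branch: the plain statistic $\Var(Y)-\Var(X)$ already has exact gap $2Hd_\beta$, so no residual-variance normalization is needed. Your heuristic ratio $1/(1+a^2\rho)$ is about $1-\rho\beta^2$, not $1-\beta^2$.
\item Lower bound: your plan matches Proposition~\ref{prop:hardpair}. Its cleanest form uses the same product source law in both models and splits $A_{\rm r}^{-1}A_{\rm f}$ into a rotation by $\arcsin\beta$, with Hellinger cost $\beta\eps$, plus a strain of size $d_\beta$.
\end{itemize}
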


\begin{remark}[Regimes]
If $\underline\sigma<\overline\sigma$, then $d_\beta=0$ for all sufficiently
small $\beta$, and the rate is
$\asymp(\beta^2\nu^2)^{-1}\log(1/\delta)$.  If the scale is known and common,
$\underline\sigma=\overline\sigma$, then $d_\beta=\beta^2$ and
\[
 N_2^\star\asymp
 \frac{\log(1/\delta)}{\beta^2(\beta^2+\nu^2)}.
\]
Thus the rate saturates at $\beta^{-4}\log(1/\delta)$ when
$\nu\lesssim\beta$.  At a nontrivial covariance threshold, the transition is
$d_\beta\asymp\beta\nu$.
\end{remark}

\begin{remark}[Scope of the constants]
The theorem is local in $(\beta,\nu)$.  The constants and the radii
$\beta_0,\nu_0$ depend only on the fixed nuisance bounds, but the compactness
part of Theorem~\ref{thm:maxwell} does not provide closed-form dependence on
$K$ or on the lower bound imposed on the absolute cosine of the rotation.
Thus ``sharp'' refers to the scaling in $(\beta,\nu,\delta)$ for fixed
nuisance bounds, not to explicit numerical constants uniform over varying
source or rotation bounds.  Away from the local region, if
$d_\beta^2+\beta^2\nu^2$ is bounded below, the confidence dependence reduces
to the usual order $\log(1/\delta)$.
\end{remark}

\section{Proof outline}
\label{sec:architecture}

The proof of Theorem~\ref{thm:main} is assembled as follows.
\begin{enumerate}[label=\textbf{Step \arabic*.},leftmargin=*]
\item Proposition~\ref{prop:nonempty-ident} verifies that the source class is
nonempty and that the two directional classes are disjoint for $\nu>0$.
\item Proposition~\ref{prop:covariance} computes the exact covariance
separation and gives an estimator with exponent $nd_\beta^2$.
\item Theorem~\ref{thm:maxwell} proves that a rotation through angle $\theta$
of two admissible non-Gaussian sources has independence defect at least
$a_*\abs{\sin\theta\cos\theta}\nu$ in a third-order Gaussian Sobolev norm.
\item Proposition~\ref{prop:population-gap} converts the wrong OLS direction
into such a rotation and obtains a population score gap $c\beta\nu$.
Propositions~\ref{prop:robustmean}--\ref{prop:score-estimator} estimate that
gap at $n^{-1/2}$ scale, giving exponent $n\beta^2\nu^2$.
\item Proposition~\ref{prop:hardpair} constructs a forward law and a reverse
law whose squared Hellinger distance is at most
$C(d_\beta^2+\beta^2\nu^2)$.  Binary testing then gives the matching lower
bound.
\end{enumerate}
The upper bound selects the covariance or independence branch according to
which known separation is larger.  Since
$\max\{d_\beta^2,\beta^2\nu^2\}\asymp
d_\beta^2+\beta^2\nu^2$, the two branches join without an extra logarithm.

\paragraph{Notation map.}
The three analytic spaces below have different roles; they should not be
confused with the source class $\cQ(K,\nu)$.
\begin{center}
\small
\begin{tabularx}{0.96\textwidth}{@{}L{0.23\textwidth}Y@{}}
\toprule
Symbol & Role\\
\midrule
$\cQ(K,\nu)$ & Probability laws of standardized sub-Gaussian sources with
characteristic-function margin at least $\nu$.\\
$\mathsf S_r(\R^m)$ & Gaussian-weighted Sobolev space in frequency variables;
the independence score is an $\mathsf S_3$ norm.\\
$\mathfrak Q^r(\R^m)$ & Unweighted Shubin space obtained after multiplying an
$\mathsf S_r$ function by the Gaussian factor $e^{-\norm{x}_2^2/2}$.\\
$\cH_r$ & Hermite coefficient graph norm used only for linear coercivity.\\
$f_j,h_j,q_j$ & Source characteristic function, Gaussian difference
$h_j=f_j-g$, and Gaussian quotient $q_j=h_j/g$ (used only in the Maxwell
section).\\
$\pi_\eps,r_\eps$ & Product source density in the lower bound and its density
ratio relative to the bivariate Gaussian.\\
$(\alpha,\gamma)$ & Polynomial and derivative multiindices in the score
feature; unrelated to structural coefficients.\\
$d_\beta$ & Scale-induced covariance separation; the other population signal
is $\beta\nu$.\\
\bottomrule
\end{tabularx}
\end{center}

\section{Nonemptiness and qualitative identifiability}
\label{sec:nonempty}

Before studying quantitative separation, we verify that the statistical
experiment is nonvacuous.  We first record the qualitative characterization
used to rule out simultaneous forward and reverse representations.  We then
construct admissible non-Gaussian sources and apply that characterization to
prove that the two directional classes are disjoint.

\begin{lemma}[Bivariate Darmois--Skitovich theorem]
\label{lem:DS}
Let $V_1,V_2$ be independent nondegenerate real random variables.  If
$a_1V_1+a_2V_2$ and $b_1V_1+b_2V_2$ are independent and
$a_jb_j\ne0$ for $j=1,2$, then both $V_1$ and $V_2$ are Gaussian.
\end{lemma}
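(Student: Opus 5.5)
We plan to prove this classical result through characteristic functions, following the standard route in \cite{KaganLinnikRao1973}. The key fact is Marcinkiewicz's lemma: if $e^{P}$ is a characteristic function with $P$ a polynomial, then $\deg P\le2$. We will then solve a functional equation for the second characteristic functions.

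\textbf{Step 1 (functional equation).} Let $\varphi_j$ be the characteristic function of $V_j$. Independence of $L_1=a_1V_1+a_2V_2$ and $L_2=b_1V_1+b_2V_2$, together with independence of $V_1,V_2$, gives
\[
 \varphi_1(a_1s+b_1t)\varphi_2(a_2s+b_2t)
 =\varphi_1(a_1s)\varphi_2(a_2s)\varphi_1(b_1t)\varphi_2(b_2t)
\]
for all $s,t\in\R$.

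\textbf{Step 2 (nonvanishing).} We first show that $\varphi_j$ never vanishes. Choose a neighbourhood of the origin on which all factors are nonzero. Then the identity, combined with $a_jb_j\ne0$, propagates nonvanishing outward: any zero of $\varphi_1$ or $\varphi_2$ at a point $u$ forces a zero at a point of strictly smaller modulus, by a geometric scaling argument. So $\psi_j=\log\varphi_j$ is a well-defined continuous function with $\psi_j(0)=0$, and
\[
 \psi_1(a_1s+b_1t)+\psi_2(a_2s+b_2t)=A(s)+B(t).
\]

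\textbf{Step 3 (showing the $\psi_j$ are polynomials).} Apply finite-difference operators to kill $A$ and $B$. Take a difference in $t$ with increment $h$ along a direction chosen so that $a_2s+b_2t$ is unchanged; this uses $b_2\ne0$ to pick the $s$-shift compensating the $t$-shift. This eliminates $\psi_2$. A further difference in $s$ then kills $A$, and a difference in $t$ kills $B$. The result is that $\psi_1$ satisfies $\Delta_{h_1}\Delta_{h_2}\Delta_{h_3}\psi_1\equiv0$ for all small increments. Since $\psi_1$ is continuous, a continuous solution of such a Fréchet-type equation is a polynomial of degree at most $2$. The nondegeneracy conditions $a_jb_j\ne0$ are exactly what allow each elimination. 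Symmetrically, $\psi_2$ is a polynomial.

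\textbf{Step 4 (identifying the laws).} Because $e^{\psi_j}$ is a characteristic function of degree at most $2$, each $V_j$ is Gaussian or degenerate. Nondegeneracy excludes the constant case, so both $V_1$ and $V_2$ are Gaussian. Marcinkiewicz's lemma is not even needed here, since the degree bound comes directly from the difference equation.

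\textbf{Main obstacle.} We expect the delicate step to be Step 2, establishing nonvanishing without moment assumptions. If the scaling argument becomes messy, the fallback is to cite the classical theorem directly: it is stated in \cite[Thm.~3.1.1]{KaganLinnikRao1973}, and in \cite{FeldmanGraczyk2010} for general real coefficients. Since the lemma is classical, a citation is acceptable, and we would keep the proof sketch brief.
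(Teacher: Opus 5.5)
The paper gives no proof of this lemma; it simply invokes the classical characterization of Kagan--Linnik--Rao and Feldman--Graczyk, which is your fallback. Your self-contained route is a genuinely different and, for two sources, more elementary argument. The classical proof works only in a neighbourhood of the origin, where logarithms exist, and obtains local polynomials of degree bounded by the number of sources. It then needs an analyticity argument and Marcinkiewicz's theorem to globalize and to cut the degree to two. Because you prove global nonvanishing first, you get $\Delta_{h_1}\Delta_{h_2}\Delta_{h_3}\psi_1\equiv0$ on all of $\R$, hence degree at most two directly. Then $|\varphi_j|\le1$ and $\varphi_j(-t)=\overline{\varphi_j(t)}$ force $\psi_j(t)=i\mu_jt-\sigma_j^2t^2/2$. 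So Marcinkiewicz is indeed unnecessary, although your opening sentence says the opposite.

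Two points need repair before this is a proof.

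\emph{Proportional forms in Step 3.} The conditions $a_jb_j\ne0$ are not what make the eliminations work. The compensating shift $(s,t)\mapsto(s-b_2h/a_2,\,t+h)$ uses $a_2\ne0$, not $b_2\ne0$. It moves the argument of $\psi_1$ by $(a_2b_1-a_1b_2)h/a_2$, so the step yields nothing when the forms are proportional. Treat that case separately. If $a_1b_2=a_2b_1$, then $L_2=(b_1/a_1)L_1$ is independent of $L_1$, so $L_1$ is almost surely constant. Hence $|\varphi_1(a_1t)|\,|\varphi_2(a_2t)|=1$ for all $t$, and both $V_j$ are degenerate, contrary to the hypothesis. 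Also, the $s$-difference removes $B$ and the $t$-difference removes $A$, not the reverse.

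\emph{Nonvanishing in Step 2.} As worded, the step fails in the original coordinates. Since $u=a_1s+b_1t=(a_1/a_2)(a_2s)+(b_1/b_2)(b_2t)$, if $|a_1/a_2|+|b_1/b_2|\le1$ then every $(s,t)$ with $a_1s+b_1t=u$ evaluates $\varphi_2$ at a point of modulus at least $|u|$. Rescale first: with $\chi_j(x)=\varphi_j(a_jx)$ and $c_j=b_j/a_j$,
\[
 \chi_1(s+c_1t)\chi_2(s+c_2t)=\chi_1(s)\chi_2(s)\chi_1(c_1t)\chi_2(c_2t).
\]
Let $\gamma=|c_2/c_1|$.
\begin{itemize}
\item If $\chi_1(u)=0$, take $s=\lambda u$ and $c_1t=(1-\lambda)u$ with $\lambda=\gamma/(1+\gamma)$.
\item If $\chi_2(w)=0$, take $s=\lambda w$ and $c_2t=(1-\lambda)w$ with $\lambda=1/(1+\gamma)$.
\end{itemize}
In both cases all four arguments on the right have modulus at most $\theta=\max\{1,\gamma\}/(1+\gamma)<1$ times the original. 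Zeros would then accumulate at $0$, contradicting continuity and $\chi_j(0)=1$. When you pass to logarithms, note also that the continuous logarithms of the two sides differ by a constant in $2\pi i\mathbb Z$ on the connected set $\R^2$. That constant vanishes at the origin.

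With these changes your argument is complete.
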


This is the classical two-source Darmois--Skitovich characterization
\cite{KaganLinnikRao1973,FeldmanGraczyk2010}.  We use it both here and in the
compactness argument of Section~\ref{sec:maxwell}; it requires neither
densities nor additional regularity.

\begin{proposition}[An admissible source path and exact direction identifiability]
\label{prop:nonempty-ident}
There is a constant $\nu_{\rm src}>0$, depending only on the fixed choice of
$K>K_{\rm G}$, such that $\cQ(K,\nu)\ne\varnothing$ for every
$0<\nu\le\nu_{\rm src}$.  Moreover,
\[
 \cP_{\dirf}(\beta,\nu)\cap\cP_{\dirr}(\beta,\nu)=\varnothing
 \qquad(\beta>0,\ \nu>0).
\tag{5.1}\label{eq:disjoint}
\]
\end{proposition}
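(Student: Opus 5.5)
The plan has two parts: nonemptiness of $\cQ(K,\nu)$ via an explicit one-parameter path, and disjointness via Lemma~\ref{lem:DS}.

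\textbf{Nonemptiness.} I will take the Rademacher-contaminated Gaussian family
\[
 Z_\eps:=\sqrt{1-\eps}\,G+\sqrt{\eps}\,R,
 \qquad G\sim\cN(0,1),\quad R\ \text{Rademacher},\quad G\indep R,\quad \eps\in[0,1].
\]
Each $Z_\eps$ is centered with $\E Z_\eps^2=1$. Its characteristic function is $f_\eps(t)=e^{-(1-\eps)t^2/2}\cos(\sqrt\eps\,t)$.

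\emph{Continuity of the margin.} The map $\eps\mapsto\NonG_w(Z_\eps)$ is continuous by dominated convergence, because $\abs{f_\eps-g}\le2$ and $w$ is integrable. It vanishes at $\eps=0$. At $\eps=1$ it is strictly positive, since $\cos t\ne e^{-t^2/2}$ on a set of positive measure.

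\emph{Continuity of the Orlicz norm.} $\norm{G}_{\psi_2}$ is the $r$ with $(1-2/r^2)^{-1/2}=2$, i.e.\ $r^2=8/3$, so $\norm G_{\psi_2}=K_{\rm G}$. The map $\eps\mapsto\norm{Z_\eps}_{\psi_2}$ is continuous at $\eps=0$. To see this, fix $r$ slightly above $K_{\rm G}$, say $r^2<K^2$. Then $\E\exp(Z_\eps^2/r^2)$ is continuous in $\eps$ by dominated convergence, using $Z_\eps^2\le(1+\eta)G^2+C_\eta\eps$. At $\eps=0$ this expectation is strictly below $2$, so it stays $\le2$ for small $\eps$.

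\emph{Choice of $\nu_{\rm src}$.} Hence there is $\eps_1>0$ with $\norm{Z_\eps}_{\psi_2}\le K$ for all $\eps\le\eps_1$. Set $\nu_{\rm src}:=\NonG_w(Z_{\eps_1})$. This is positive: $f_{\eps_1}/g=e^{\eps_1t^2/2}\cos(\sqrt{\eps_1}t)$ is not identically $1$, since its Taylor coefficient of $t^4$ is $\eps_1^2/8-\eps_1^2/24\ne0$. For $0<\nu\le\nu_{\rm src}$, the intermediate value theorem on $[0,\eps_1]$ gives some $\eps$ with $\NonG_w(Z_\eps)=\nu$. Simply using $Z_{\eps_1}$ also works. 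Either way $\cQ(K,\nu)\neq\varnothing$.

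\textbf{Disjointness.} Suppose a law $P$ lies in both $\cP_{\dirf}(\beta,\nu)$ and $\cP_{\dirr}(\beta,\nu)$. Take a forward representation of $P$ on some probability space, $X=\sigma_1Z_1$ and $Y=a\sigma_1Z_1+\sigma_2Z_2$. The reverse model says that $(X,Y)$ has the law of $(bY'+\tau_1U_1,\,Y')$ with $U_1\indep Y'$. Since $\tau_1U_1=X-bY$ is a measurable function of $(X,Y)$, the joint law of $(X-bY,\,Y)$ under $P$ is determined. Therefore $X-bY$ and $Y$ are independent under the forward representation too.

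In terms of the sources,
\[
 X-bY=(1-ab)\sigma_1Z_1-b\sigma_2Z_2,\qquad Y=a\sigma_1Z_1+\sigma_2Z_2 .
\]
The sources $Z_1,Z_2$ are independent and nondegenerate (variance one). The coefficients of $Y$ are nonzero because $a\ne0$. The coefficient of $Z_2$ in $X-bY$ is $-b\sigma_2\ne0$.

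The remaining case split is on the coefficient of $Z_1$ in $X-bY$:
\begin{itemize}
 \item If $1-ab\neq0$, all four coefficients are nonzero. Lemma~\ref{lem:DS} then forces $Z_1$ and $Z_2$ to be Gaussian. Then $f_{Z_j}=g$, so $\NonG_w(Z_j)=0<\nu$, contradicting $\mathcal L(Z_j)\in\cQ(K,\nu)$.
 \item If $ab=1$, then $X-bY=-b\sigma_2Z_2$. Independence of $Z_2$ from $Y=a\sigma_1Z_1+\sigma_2Z_2$ is impossible, because $\Cov(Z_2,Y)=\sigma_2\ne0$.
\end{itemize}

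I expect the only delicate point to be the continuity of the $\psi_2$ norm near the Gaussian endpoint. This is exactly where the hypothesis $K>K_{\rm G}$ enters.
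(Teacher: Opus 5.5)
Your proof is correct. The disjointness half is essentially the paper's argument. The paper reads off $b=a\sigma_1^2/(a^2\sigma_1^2+\sigma_2^2)$ from $\Cov(X-bY,Y)=0$, which gives $1-ab>0$ at once. You instead split off the case $ab=1$ and dispose of it with the same covariance observation. Your remark that independence of $X-bY$ and $Y$ is a property of the law of $(X,Y)$, and so transfers to the forward representation, makes explicit a step the paper leaves implicit.

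The nonemptiness half takes a genuinely different route. The paper uses the odd perturbation $p_\eps=\phi\{1+\eps h_\circ\}$, for which parity makes every requirement exact:
\begin{itemize}
 \item the mean and variance are those of $G$;
 \item $\E e^{Z_\eps^2/r^2}=\E e^{G^2/r^2}$, so $\norm{Z_\eps}_{\psi_2}=K_{\rm G}$ with no continuity argument, and $\nu_{\rm src}=A_\circ\eps_\circ$ is even a numerical constant;
 \item $\NonG_w(Z_\eps)=A_\circ\abs\eps$ is exactly linear.
\end{itemize}
That extra structure is there because the same path is reused in Proposition~\ref{prop:hardpair}. There, the two-sided bound $1+\eps h_\circ\in[1/2,3/2]$, the gradient bound of order $\abs\eps$, and $\nu\asymp\eps$ together produce $H^2\lesssim\beta^2\nu^2$.

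Your Rademacher-contaminated Gaussian, with a dominated-convergence bound on $\E\exp(Z_\eps^2/r^2)$, is softer and fully adequate for this proposition. You rightly note that the single law $Z_{\eps_1}$ serves every $\nu\le\nu_{\rm src}$, so the intermediate value theorem is unnecessary. However, it would not plug into the lower-bound argument as written. Its density ratio to $\phi$ tends to $0$ in the tails, and its margin is only of order $\eps^2$, since the first nonvanishing cumulant is the fourth one, equal to $-2\eps^2$.

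One small slip: the $t^4$ coefficient of $e^{\eps_1t^2/2}\cos(\sqrt{\eps_1}\,t)$ is $\eps_1^2/8-\eps_1^2/4+\eps_1^2/24=-\eps_1^2/12$; you dropped the cross term. It is still nonzero, so the conclusion stands. An even quicker argument is that $f_{\eps_1}$ vanishes at $t=\pi/(2\sqrt{\eps_1})$, whereas $g>0$.
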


\begin{proof}
Let $\phi$ be the standard Gaussian density and put
\[
 c_\circ:=\frac{e^{3/2}}2,
 \qquad h_\circ(x):=\sin x-c_\circ\sin(2x),
 \qquad M_\circ:=1+c_\circ,
 \qquad \eps_\circ:=\frac1{2M_\circ}.
\tag{5.2}\label{eq:sourceh}
\]
For $\abs\eps\le\eps_\circ$, define
\[
 p_\eps(x):=\phi(x)\{1+\eps h_\circ(x)\}.
\tag{5.3}\label{eq:sourcepath}
\]
Let $Z_\eps$ have density $p_\eps$.
Because $\abs{h_\circ}\le M_\circ$, the factor in braces lies in
$[1/2,3/2]$.  If $G\sim\cN(0,1)$, oddness of $h_\circ$ gives
\[
 \int_\R p_\eps(x)\dd x
 =1+\eps\E h_\circ(G)=1,
 \qquad
 \int_\R x^2p_\eps(x)\dd x
 =1+\eps\E[G^2h_\circ(G)]=1.
\]
Moreover,
\[
 \E[G\sin(kG)]=k e^{-k^2/2},
\]
and hence
\[
 \E[Gh_\circ(G)]
 =e^{-1/2}-c_\circ(2e^{-2})=0.
\]
Thus $p_\eps$ is centered and has variance one.  For every $r>\sqrt2$,
$x\mapsto e^{x^2/r^2}\phi(x)$ is even, so
\[
 \int e^{x^2/r^2}p_\eps(x)\dd x
 =\E e^{G^2/r^2}
 =\left(1-\frac2{r^2}\right)^{-1/2}.
\]
The last quantity is at most $2$ exactly when $r^2\ge8/3$; for
$r\le\sqrt2$, the integral is infinite because $p_\eps\ge\phi/2$.
Therefore \eqref{eq:psi2} gives
\[
 \norm{Z_\eps}_{\psi_2}=\norm G_{\psi_2}=\sqrt{8/3}=K_{\rm G}<K.
\tag{5.4}\label{eq:pathpsi}
\]

The characteristic function of $p_\eps$ is $g+\eps H_\circ$, where
\[
 H_\circ(t)=i\left[
 e^{-(t^2+1)/2}\sinh t
 -c_\circ e^{-(t^2+4)/2}\sinh(2t)
 \right].
\tag{5.5}\label{eq:Hcirc}
\]
Writing each sine as two complex exponentials gives this formula.  Moreover,
the choice of $c_\circ$ reduces it to
\[
 H_\circ(t)
 =i e^{-(t^2+1)/2}\sinh(t)\{1-\cosh(t)\},
\]
which is not identically zero.  Hence
\[
 A_\circ:=\left(\int\abs{H_\circ(t)}^2w(t)\dd t\right)^{1/2}>0,
 \qquad \NonG_w(Z_\eps)=A_\circ\abs\eps.
\tag{5.6}\label{eq:pathNG}
\]
Taking $\nu_{\rm src}:=A_\circ\eps_\circ$ and
$\eps=\nu/A_\circ$ proves nonemptiness.

For disjointness, suppose one law admits both representations.  In its
forward representation set
\[
 V:=\sigma_1Z_1,\qquad E:=\sigma_2Z_2,\qquad X=V,\qquad Y=aV+E.
\]
In any reverse representation, $Y\indep X-bY$; taking covariance and using
finite second moments forces
\[
 b=\frac{\Cov(X,Y)}{\Var(Y)}
 =\frac{a\sigma_1^2}{a^2\sigma_1^2+\sigma_2^2}.
\tag{5.7}\label{eq:reverseols}
\]
Thus $b\ne0$ and
\[
 1-ab=\frac{\sigma_2^2}{a^2\sigma_1^2+\sigma_2^2}>0.
\]
The independent reverse forms, expressed in the independent variables
$(V,E)$, are
\[
 Y=aV+E,
 \qquad X-bY=(1-ab)V-bE.
\tag{5.8}\label{eq:twoforms}
\]
All four coefficients in \eqref{eq:twoforms} are nonzero.
Lemma~\ref{lem:DS} therefore forces both $V$ and $E$ to be Gaussian.
After standardization, $Z_1,Z_2\sim\cN(0,1)$, contradicting
$\NonG_w(Z_j)\ge\nu>0$.  Lemma~\ref{lem:DS} applies to arbitrary probability laws,
so the argument does not assume densities or non-atomicity.
\end{proof}

\section{Exact covariance geometry}
\label{sec:covariance}

Section~\ref{sec:nonempty} established only that the two model classes are
disjoint.  We now ask how much of that separation is already visible in the
covariance matrix.  The answer is exact: covariance supplies the signal
$d_\beta$, and it supplies no uniform directional signal when $d_\beta=0$.

\begin{proposition}[Covariance overlap, separation, and testing]
\label{prop:covariance}
Assume $0<\beta\le\min\{1,\overline a\}$ and
$\cQ(K,\nu)\ne\varnothing$.  The sets of covariance matrices generated by the two
directional classes intersect if and only if
\[
 \beta^2\le1-\rho.
\tag{6.1}\label{eq:covintersect}
\]
For
\[
 \Delta(P):=\Var_P(Y)-\Var_P(X),
\]
one has
\[
 \inf_{P\in\cP_{\dirf}}\Delta(P)
 =H\{\beta^2-(1-\rho)\},
 \qquad
 \sup_{P\in\cP_{\dirr}}\Delta(P)
 =-H\{\beta^2-(1-\rho)\}.
\tag{6.2}\label{eq:Deltagap}
\]
In particular, when $d_\beta>0$, the two half-spaces are separated by a gap
$2Hd_\beta$.
Consequently, if $d_\beta>0$, the rule
\[
 \widehat d_n=
 \begin{cases}
  \dirf,&\widehat\Delta_n\ge0,\\
  \dirr,&\widehat\Delta_n<0,
 \end{cases}
 \qquad
 \widehat\Delta_n:=\frac1n\sum_{i=1}^n(Y_i^2-X_i^2)
\tag{6.3}\label{eq:hatDelta}
\]
has worst directional error at most
\[
 2e^{-cnd_\beta^2},
\tag{6.4}\label{eq:covrisk}
\]
where $c>0$ depends only on the fixed constants.
\end{proposition}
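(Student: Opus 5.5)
The plan is to compute the covariance matrices of both classes explicitly and reduce everything to elementary optimization over $(a,\sigma_1,\sigma_2)$, then to apply a sub-exponential concentration bound to $\widehat\Delta_n$.

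In the forward model \eqref{eq:forward} we have $\Var X=\sigma_1^2$, $\Var Y=a^2\sigma_1^2+\sigma_2^2$ and $\Cov(X,Y)=a\sigma_1^2$. Hence
\[
 \Delta=a^2\sigma_1^2+\sigma_2^2-\sigma_1^2\ge \beta^2\sigma_1^2-(\sigma_1^2-\sigma_2^2).
\]
For $\beta\le1$ the expression $(\beta^2-1)\sigma_1^2+\sigma_2^2$ is minimized at $\sigma_1^2=H$, $\sigma_2^2=L$, $\abs a=\beta$. Since $\beta\le\overline a$ and $\cQ(K,\nu)\neq\varnothing$, this choice is admissible, and it gives $\inf\Delta=H\beta^2-H+L=H\{\beta^2-(1-\rho)\}$. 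The reverse model is the mirror image with $X$ and $Y$ swapped, so it yields the negative of this value. This proves \eqref{eq:Deltagap}.

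For the intersection criterion \eqref{eq:covintersect}, the direction ``intersect $\Rightarrow\beta^2\le1-\rho$'' follows immediately from \eqref{eq:Deltagap}. Indeed, a common covariance matrix has a single value of $\Delta$ satisfying $\Delta\ge H\{\beta^2-(1-\rho)\}$ and $\Delta\le -H\{\beta^2-(1-\rho)\}$, which forces $\beta^2-(1-\rho)\le0$.

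For the converse I would construct an explicit common matrix. A convenient choice is a symmetric one with $\Var X=\Var Y=s$ and covariance $c$, where $c=a\sigma_1^2$, $s=\sigma_1^2=a^2\sigma_1^2+\sigma_2^2$. This forces $\sigma_2^2=\sigma_1^2(1-a^2)$. Take $\abs a=\beta$ and $\sigma_1^2=H$, so that $\sigma_2^2=H(1-\beta^2)\ge L$ precisely when $\beta^2\le1-\rho$, and $\sigma_2^2\le H$ holds automatically. The reverse representation with $b=a$, $\tau_2^2=H$ and $\tau_1^2=H(1-\beta^2)$ then produces the same matrix, by the symmetry of the matrix. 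The only care needed is to check admissibility, namely $\abs a=\beta\le\overline a$ and $\sigma_2>0$ (if $\beta=1$ then $\rho=0$ is impossible, so that case is excluded), and to note that the source class is nonempty so the laws exist.

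For the testing bound, note that under $P\in\cP_{\dirf}$ we have $\E\widehat\Delta_n=\Delta(P)\ge Hd_\beta>0$, so an error requires $\widehat\Delta_n-\E\widehat\Delta_n\le -Hd_\beta$. The summands $Y_i^2-X_i^2$ are sub-exponential. Indeed, $\norm{X}_{\psi_2}\le\overline\sigma K$ and $\norm{Y}_{\psi_2}\le(\overline a+1)\overline\sigma K$ by the triangle inequality for the Orlicz norm, so $\norm{Y^2-X^2}_{\psi_1}\le C_0$ with $C_0$ depending only on the fixed constants; centering costs at most a constant factor. Bernstein's inequality gives
\[
 \Pp(\cdot)\le 2\exp\Bigl(-c\,n\min\bigl\{H^2d_\beta^2/C_0^2,\;Hd_\beta/C_0\bigr\}\Bigr).
\]
Because $d_\beta\le\beta^2\le\overline a^2$ is bounded, the minimum is at least a constant multiple of $d_\beta^2$, which yields \eqref{eq:covrisk}. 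The reverse class is handled symmetrically.

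The main obstacle is mild and consists of bookkeeping rather than analysis. One must make the Bernstein constants depend only on \eqref{eq:fixedconstants}, which works because $d_\beta$ is bounded, and one must make sure the extremal parameters in the infimum and in the overlap construction are admissible at the boundary cases.
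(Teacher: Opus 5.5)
Your proof is correct. It uses the same extremal construction as the paper for the overlap ($\abs a=\abs b=\beta$, leading scale $H$, secondary scale $H(1-\beta^2)$) and the same sub-exponential Bernstein step. The difference is in how you reach the \emph{only if} half of \eqref{eq:covintersect}.

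The paper equates the entries of a forward and a reverse covariance matrix and reduces to the ratios $\alpha/q$ and $1-\alpha q$, where $\alpha=\abs a$ and $q=\abs b$. It then derives the exact feasibility condition \eqref{eq:covfeasible}, $1-\alpha q\ge\rho\max\{\alpha/q,q/\alpha\}$, from which $1-\beta^2\ge\rho$ follows. You instead establish \eqref{eq:Deltagap} first and observe that a common matrix would have a single value of $\Delta$ lying in both half-spaces, forcing $\beta^2-(1-\rho)\le0$.

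Your bound $(a^2-1)\sigma_1^2+\sigma_2^2\ge(\beta^2-1)\sigma_1^2+\sigma_2^2$ also removes the paper's case split between $\abs a\le1$ and $\abs a\ge1$.

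The paper's route gives a complete description of which coefficient pairs $(\abs a,\abs b)$ admit a common covariance matrix. For instance, at the threshold $\beta^2=1-\rho$ only $\abs a=\abs b=\beta$ works. Yours is shorter and makes clear that whenever the covariance classes are disjoint, the single linear functional $\Delta$ already separates them. That is exactly the statistic the test uses.
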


\begin{proof}
Write the forward source variances as $s=\sigma_1^2$, $t=\sigma_2^2$,
and the reverse source variances as $u=\tau_2^2$, $v=\tau_1^2$.  Equality of
a forward and reverse covariance matrix is equivalent to
\[
 u=a^2s+t,\qquad b=\frac{as}{u},\qquad v=\frac{st}{u}.
\tag{6.5}\label{eq:covequality}
\]
Indeed, these identities follow by equating respectively the $(2,2)$,
$(1,2)$, and $(1,1)$ entries; conversely they make every entry equal.
Set $\alpha=\abs a$ and $q=\abs b$.  Then \eqref{eq:covequality} becomes
\[
 u=\frac\alpha q s,\qquad
 v=(1-\alpha q)s,\qquad
 t=\frac\alpha q(1-\alpha q)s.
\tag{6.6}\label{eq:covratios}
\]
Relative to $s$, the four variances are
$1,r,c,rc$, where $r=\alpha/q$ and $c=1-\alpha q$.  Put
\[
 m_0:=\min\{1,r,c,rc\},\qquad
 M_0:=\max\{1,r,c,rc\}.
\]
A common scale $s>0$ places all four variances in $[L,H]$ precisely when
$[L/m_0,H/M_0]\ne\varnothing$, equivalently $M_0/m_0\le H/L$.  Since
$0<c<1$,
\[
 \frac{M_0}{m_0}
 =\frac{\max\{1,r\}}{c\min\{1,r\}}
 =\frac{\max\{r,r^{-1}\}}{c}.
\]
Thus feasibility is equivalent to
\[
 1-\alpha q\ge
 \rho\max\left\{\frac\alpha q,\frac q\alpha\right\}.
\tag{6.7}\label{eq:covfeasible}
\]
If $\alpha,q\ge\beta$, \eqref{eq:covfeasible} implies
$1-\beta^2\ge\rho$.  Conversely, when that inequality holds, take
\[
 \alpha=q=\beta,\qquad s=u=H,\qquad
 t=v=H(1-\beta^2)\in[L,H].
\]
This proves \eqref{eq:covintersect}.

Under a forward law,
\[
 \Delta=(a^2-1)s+t.
\]
For $\beta\le\abs a\le1$, the minimum over $(s,t)\in[L,H]^2$ is
$H(a^2-1)+L$, which is increasing in $a^2$ and is therefore minimized at
$\abs a=\beta$.  For $\abs a\ge1$, one has $\Delta\ge L$, while
$H(\beta^2-1)+L\le L$ because $\beta\le1$.  Hence
\[
 \inf\Delta=H\{\beta^2-(1-\rho)\}.
\]
Because the two covariance classes are obtained
from one another by swapping $X$ and $Y$, the reverse statement follows.  At
$d_\beta>0$, the boundary values $\abs a=\abs b=\beta$,
$s=u=H$, $t=v=L$ attain equality.  This proves \eqref{eq:Deltagap}.

Finally, define the sub-exponential Orlicz norm by
\[
 \norm{V}_{\psi_1}
 :=\inf\left\{r>0:\E\exp(\abs V/r)\le2\right\}.
\]
The model bounds give
$\norm X_{\psi_2}+\norm Y_{\psi_2}\le C_0$.  The implication
$\norm{U^2}_{\psi_1}\le C\norm U_{\psi_2}^2$, followed by the triangle
inequality and centering, yields
\[
 \norm{(Y^2-X^2)-\E(Y^2-X^2)}_{\psi_1}\le C_1.
\]
Bernstein's inequality gives
\[
 \Pp_P\bigl(\abs{\widehat\Delta_n-\Delta(P)}\ge Hd_\beta\bigr)
 \le2\exp\{-cn\min(d_\beta^2,d_\beta)\}
 =2e^{-cnd_\beta^2},
\]
where the last equality uses $0<d_\beta\le\beta^2\le1$ and absorbs $H$
into the fixed constant.
By \eqref{eq:Deltagap}, this event contains every sign error, proving
\eqref{eq:covrisk}.
\end{proof}

\section{A quantitative Maxwell--Darmois--Skitovich inequality}
\label{sec:maxwell}

Proposition~\ref{prop:covariance} settles the upper bound whenever covariance
provides the larger signal.  When the covariance classes overlap, direction
must instead be recovered from the dependence left by the wrong OLS
residual.  This section proves the quantitative rotation inequality needed
to lower-bound that dependence.

For $m\ge1$, let
\[
 W_m(t):=\pi^{-m/2}e^{-\norm{t}_2^2}
\]
and, for an integer $r\ge0$, define the Gaussian Sobolev norm
\[
 \norm F_{\mathsf S_r(\R^m)}^2
 :=\sum_{\abs\alpha+\abs\gamma\le r}
 \int_{\R^m}\abs{t^\alpha\partial^\gamma F(t)}^2W_m(t)\dd t.
\tag{7.1}\label{eq:Sr}
\]
For a centered variance-one pair $(S,T)$, write
\[
 \Phi(u,v):=\E e^{i(uS+vT)},\qquad
 D(u,v):=\Phi(u,v)-\Phi(u,0)\Phi(0,v).
\tag{7.2}\label{eq:defect}
\]
Thus $D=0$ if and only if $S\indep T$.

\begin{theorem}[Uniform Sobolev Maxwell inequality]
\label{thm:maxwell}
Fix $K<\infty$ and $c_*>0$.  There is
$a_*=a_*(K,c_*)>0$ such that the following holds.  Let $Z_1,Z_2$ be
independent, centered, variance-one variables with
$\norm{Z_j}_{\psi_2}\le K$, and let
\[
 S=cZ_1+sZ_2,\qquad T=-sZ_1+cZ_2,\qquad
 c^2+s^2=1,\qquad \abs c\ge c_*.
\tag{7.3}\label{eq:rotation}
\]
Then the defect \eqref{eq:defect} satisfies
\[
 \norm D_{\mathsf S_3(\R^2)}
 \ge a_*\abs{sc}
 \left\{\NonG_w(Z_1)^2+\NonG_w(Z_2)^2\right\}^{1/2}.
\tag{7.4}\label{eq:maxwell}
\]
\end{theorem}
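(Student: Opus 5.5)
We argue by a linearization-plus-compactness scheme adapted to the Gaussian-weighted norm. Write $f_j$ for the characteristic function of $Z_j$, $h_j=f_j-g$, and $q_j=h_j/g$. With this notation,
\[
 \Phi(u,v)=f_1(cu-sv)f_2(su+cv),\qquad
 \Phi(u,0)=f_1(cu)f_2(su),\qquad
 \Phi(0,v)=f_1(-sv)f_2(cv).
\]
Substituting $f_j=g(1+q_j)$ and using $g(cu-sv)g(su+cv)=g(u)g(v)=g(cu)g(su)g(sv)g(cv)$ (rotation invariance of the Gaussian), we obtain the factorization
\[
 D(u,v)=g(u)g(v)\bigl[(1+q_1(cu-sv))(1+q_2(su+cv))-(1+q_1(cu))(1+q_2(su))(1+q_1(-sv))(1+q_2(cv))\bigr].
\]
The bracket splits into a part linear in $(q_1,q_2)$ and higher-order products. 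The linear part is
\[
 \mathcal L(q_1,q_2)=\bigl[q_1(cu-sv)-q_1(cu)-q_1(-sv)\bigr]+\bigl[q_2(su+cv)-q_2(su)-q_2(cv)\bigr].
\]
This is a Cauchy-type defect: it vanishes exactly when each $q_j$ is additive, hence linear. Linear $q_j$ is excluded, up to quadratic terms, by the mean-zero, variance-one normalization, since $q_j(0)=q_j'(0)=q_j''(0)=0$.

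\textbf{Step 1 (linear coercivity).} Multiplying by the Gaussian factor moves us into the unweighted Shubin space $\mathfrak Q^3(\R^2)$, where we expand in Hermite functions; this is the graph norm $\cH_3$. The map $q\mapsto q(cu-sv)-q(cu)-q(-sv)$ acts diagonally on Hermite degree up to rotation-type mixing. For a homogeneous polynomial of degree $k\ge2$ in the direction variable, the defect has norm at least $\asymp\abs{sc}$ times the coefficient, because $(cu-sv)^k-(cu)^k-(-sv)^k$ contains the cross term $k(cu)^{k-1}(-sv)$. We expect an estimate of the form
\[
 \norm{g\otimes g\cdot\mathcal L}_{\mathsf S_3}\ge a_1\abs{sc}\,(\norm{h_1}_{w}+\norm{h_2}_{w}),
\]
uniformly for $\abs c\ge c_*$. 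Here the three derivatives/weights in $\mathsf S_3$ are what compensate for the loss of one power of $\abs{s}$ per Hermite level. The $\abs c\ge c_*$ bound keeps the first-coordinate direction nondegenerate, and the $s\to0$ behaviour gives exactly the factor $\abs{s}$, since the defect is $\partial_s$ of a smooth family vanishing at $s=0$.

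\textbf{Step 2 (small-perturbation regime).} When $\NonG_w(Z_1)^2+\NonG_w(Z_2)^2\le\eta$ for a small $\eta$, we bound the nonlinear remainder by $C\abs{sc}\,\eta^{1/2}(\norm{h_1}+\norm{h_2})$, using sub-Gaussian bounds on derivatives of $f_j$ (from $\norm{Z_j}_{\psi_2}\le K$) to control products in $\mathsf S_3$. The key point is that every quadratic term also vanishes at $s=0$, so it too carries a factor $\abs{s}$. Choosing $\eta$ small then absorbs the remainder into the linear coercivity bound.

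\textbf{Step 3 (compactness regime).} When the non-Gaussianity is at least $\eta$, we argue by contradiction. Suppose there is a sequence with $\norm D_{\mathsf S_3}/\abs{sc}\to0$. The $\psi_2$ bound gives tightness and uniform moment bounds, so we may pass to limits in $\mathsf S_3$. If $s_n\to s\ne0$, the limit law has $D=0$ with a nondegenerate rotation, so Lemma~\ref{lem:DS} makes both limit sources Gaussian. This contradicts $\NonG_w\ge\eta$, which passes to the limit because weighted $L^2$ convergence of characteristic functions follows from weak convergence. If $s_n\to0$, we divide by $s_n$ and obtain in the limit $\partial_sD\vert_{s=0}=0$. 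This is the linearized equation, which forces $u\,f_1'(u)f_2(v)\cdots$ to satisfy a Cauchy-type relation implying $f_j$ Gaussian, again a contradiction.

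The main obstacle is Step 1 together with the $s\to0$ limit in Step 3. The difficulty is proving that the linearized Cauchy defect is coercive with exactly one factor $\abs{s}$ and a constant uniform over all Hermite degrees. Large Hermite degree could leak mass beyond what three Sobolev orders control, so we would first try to handle this by a direct spectral computation on Hermite coefficients.
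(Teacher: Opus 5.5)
Your architecture matches the paper's: Hermite coercivity of the linear part, a nonlinear remainder carrying the factor $\abs{sc}$, and compactness using Darmois--Skitovich at mixed angles and the Maxwell equation at unmixed angles. However, two steps do not go through as you describe them.

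\textbf{Step 2 is missing the bridging idea.} Individual quadratic terms do not vanish at $s=0$. For example, the joint term $h_1(u)h_2(v)$ is cancelled by the marginal term $h_1(cu)h_2(cv)g(su)g(sv)$. So the factor $\abs s$ is obtained by bounding $\partial_\theta R_\theta$ in $\mathsf S_3$ uniformly along the angle segment and integrating from the unmixed angle. The angular derivative plus three word letters can put four operators on one slot. The product estimates therefore only give $\norm{\partial_\theta R_\theta}_{\mathsf S_3}\le C(\norm{h_1}_{\mathsf S_4}+\norm{h_2}_{\mathsf S_4})^2$. This is where $\abs c\ge c_*$ enters, because the slot $F(c\,\cdot)$ must stay in $L^2$. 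It does not enter the linear step, which holds for every $\theta$.

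Your linear lower bound is in the weak norm $\norm{h_j}_{L^2(w)}$; even an $\mathsf S_3$ lower bound would not match $\mathsf S_4$. Sub-Gaussianity alone makes $\norm{h_j}_{\mathsf S_4}$ bounded, not small relative to $\norm{h_j}_{L^2(w)}$. So your claimed bound $C\abs{sc}\,\eta^{1/2}(\norm{h_1}+\norm{h_2})$ has no mechanism behind it. The paper closes this with the Hermite-coefficient interpolation $\norm h_{\mathsf S_4}\le C\norm h_{L^2(w)}^{2/3}\norm h_{\mathsf S_{12}}^{1/3}$ and the uniform bound $\norm{h_j}_{\mathsf S_{12}}\le M(K)$. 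Together these give $\norm{R_\theta}_{\mathsf S_3}\le C\abs{sc}\,\eta^{4/3}$ with $\eta^2=\sum_j\norm{h_j}_{L^2(w)}^2$, and only then can the remainder be absorbed.

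\textbf{Step 1 is unproved, and its mechanism is misdiagnosed.} There is no loss of a power of $\abs s$ per Hermite level. For each degree $m\ge3$, the Gram matrix of the mixed (two-coordinate) parts of the two rotated degree-$m$ Hermite polynomials has least eigenvalue at least $3c^2s^2$, uniformly in $m$. So large degrees need no Sobolev compensation.

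Coercivity actually fails at degrees $\le2$. Degrees $0$ and $1$ produce no mixed part. At degree two, $q_1=q_2=t^2$ gives $-2csuv+2csuv=0$, so your claim that $\mathcal L=0$ forces each $q_j$ to be additive is false. These low modes are controlled only through the traces $h_j(0)=h_j'(0)=h_j''(0)=0$. These yield $\norm{q_j^{\le2}}_{\mathcal H_3}\le C\norm{q_j^{>2}}_{\mathcal H_3}$ because evaluation of the second derivative at $0$ is continuous on $\mathcal H_3$ but not at order zero. That is why third order appears.

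Step 3 matches the paper, with two additions needed. The endpoint identity $uf_1(u)f_2'(v)-vf_1'(u)f_2(v)=0$ yields $f_j=g$ only near $0$, so you need entireness of $f_j$ (from sub-Gaussianity) to conclude on all of $\R$. Also, taking the limit of $D_{\theta_n}/s_n$ requires joint continuity of $\partial_\theta D_\theta$ in the sources. That in turn requires convergence in $\mathsf S_4$, which is available because the class is compact in every $\mathsf S_r$.
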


We give all analytic details because the factor $\abs{sc}$ and uniformity
over arbitrary sub-Gaussian source laws are essential for the final rate.

\subsection{Proof strategy}

The standardized wrong-direction OLS pair derived in
Section~\ref{sec:popgap} is, up to swapping the sources and applying coordinate
reflections, an orthogonal rotation
\[
 S=cZ_1+sZ_2,
 \qquad
 T=-sZ_1+cZ_2.
\]
The analytic task in this section is to quantify the dependence
created by this rotation.  Write $D_\theta$ for the defect associated with
$c=\cos\theta$ and $s=\sin\theta$, and set
\[
 \eta
 :=
 \left\{
 \NonG_w(Z_1)^2+\NonG_w(Z_2)^2
 \right\}^{1/2}.
\]
The target scale is $\abs{sc}\eta$: the mixing contributes $\abs{sc}$, while
the departure of the sources from the rotation-invariant Gaussian pair
contributes $\eta$.

The origin of this product is clearest with a local path argument.  Specifically, let
$\varepsilon\mapsto f_{j,\varepsilon}=g+\varepsilon r_j$ be a local path of
characteristic functions of centered, variance-one sources through $g(t)=e^{-t^2/2}$, and
let $\theta$ vary near an unmixed angle
$\theta_0\in\pi\mathbb Z$.  Let $D_{\varepsilon,\theta}$ be the defect \eqref{eq:defect} of the pair
obtained by rotating independent sources with characteristic functions
$f_{1,\varepsilon}$ and $f_{2,\varepsilon}$ through the angle $\theta$.
Then
\[
 D_{0,\theta}=0
 \quad\text{for every }\theta,
 \qquad
 D_{\varepsilon,\theta_0}=0
 \quad\text{for every }\varepsilon.
\]
The first identity follows from rotational invariance of the Gaussian pair,
and the second from independence in the absence of mixing.  Consequently,
in the two-variable Taylor expansion in
$(\varepsilon,\theta-\theta_0)$, every term depending on only one of these
variables vanishes.  The first potentially nonzero term is
\[
 \varepsilon(\theta-\theta_0)\mathcal A(r_1,r_2),
 \qquad
 \mathcal A(r_1,r_2)
 :=\left.\partial_\varepsilon\partial_\theta
 D_{\varepsilon,\theta}\right|_{(0,\theta_0)}.
\]
Because every source along the path is standardized,
\[
 r_j(0)=r_j'(0)=r_j''(0)=0.
\]
The local problem is to show that $\mathcal A(r_1,r_2)\ne0$ whenever
$(r_1,r_2)\ne(0,0)$ subject to these three constraints.

To answer this question, we write $f_j=g+h_j$ and decompose the defect exactly as
\[
 D_\theta=D_{\mathrm{lin},\theta}+R_\theta,
\]
where the first term is linear in $(h_1,h_2)$ and every term in $R_\theta$
contains at least two perturbation factors.  The Hermite argument in
Lemma~\ref{lem:linearcoercivity} proves
\[
 \norm{D_{\mathrm{lin},\theta}}_{\mathsf S_3}
 \ge a_0\abs{sc}\eta.
\]
To prevent the full defect from being much smaller, it remains to show that
$R_\theta$ cannot cancel this linear signal.  Proposition~\ref{prop:nonlinear}
gives
\[
 \norm{R_\theta}_{\mathsf S_3}
 \le C\abs{sc}\eta^{4/3}.
\]
The Sobolev interpolation used below bounds the intermediate
$\mathsf S_4$ norm between the $L^2(w)$ and $\mathsf S_{12}$ norms.  Here the
higher power of $\eta$ comes from Sobolev interpolation and the factor
$\abs{sc}$ from integrating the angular derivative from an unmixed angle.
Consequently,
\[
 \norm{D_\theta}_{\mathsf S_3}
 \ge
 \abs{sc}\eta\bigl(a_0-C\eta^{1/3}\bigr)
 \ge \frac{a_0}{2}\abs{sc}\eta
\]
whenever $\eta\le\eta_{\mathrm{loc}}$, where
$\eta_{\mathrm{loc}}>0$ is chosen so that
$C\eta_{\mathrm{loc}}^{1/3}\le a_0/2$.  This proves the desired bound
near the Gaussian pair.

Note that this perturbative argument cannot cover the whole source class.
When $\eta\ge\eta_{\mathrm{loc}}$, the nonlinear remainder is no longer
guaranteed to be smaller than the linear term, so we instead use a
compactness argument.  The uniform sub-Gaussian bound makes the admissible
characteristic functions compact in $\mathsf S_4(\R)$, the source topology
used in the angular-continuity argument below.  In fact, the proof establishes
compactness in every fixed $\mathsf S_r(\R)$.

Because $D_\theta$ vanishes automatically when $sc=0$, for $sc\ne0$ we
divide out this known angular zero and set
\[
 \overline E(f_1,f_2,\theta):=\frac{D_\theta}{sc}.
\]
At an unmixed angle $\theta_0$, define its endpoint value as
$\partial_\theta D_{\theta_0}$.  Joint angular $C^1$ regularity ensures that
$D_\theta/(sc)$ converges to this endpoint value as
$\theta\to\theta_0$, and hence that $\overline E$ is continuous as an
$\mathsf S_3$-valued function.

This extension has no non-Gaussian zero.  At an interior angle, meaning
$sc\ne0$, a zero of
$\overline E$ would imply $D_\theta=0$, so the two rotated coordinates would
be independent; Darmois--Skitovich would then force both sources to be
Gaussian.  At an unmixed endpoint, a zero of
$\partial_\theta D_{\theta_0}$ gives the differential identity
\eqref{eq:Maxwellequation}; its only standardized sub-Gaussian
characteristic-function solution is the Gaussian pair.  Therefore
\[
 \frac{\norm{\overline E(f_1,f_2,\theta)}_{\mathsf S_3}}{\eta}
\]
is a continuous positive function on the compact region
$\eta\ge\eta_{\mathrm{loc}}$ and $\abs{\cos\theta}\ge c_*$.  It consequently
has a positive minimum there, completing the bound away from the Gaussian
pair and hence the proof on the full source class.

Gaussian conjugation and the product and angular-regularity lemmas provide
the norm estimates and endpoint continuity used above.  Once
Section~\ref{sec:popgap} gives $\abs{sc}\gtrsim\beta$ and
$\eta\gtrsim\nu$ for the wrong OLS direction, the resulting population gap is shown to be
at least a constant multiple of $\beta\nu$, producing the
$\beta^2\nu^2$ testing signal.

\subsection{Gaussian conjugation}

The proof strategy separates a local Hermite estimate from a nonlinear
remainder estimate.  Both are expressed in the weighted norm
$\mathsf S_r$.  We first remove that Gaussian weight so that ordinary
multiplication and differentiation estimates can be used in the later
product bounds.

For $m\ge1$, define the integer Shubin norm
\[
 \norm F_{\mathfrak Q^r(\R^m)}^2
 :=\sum_{\abs\alpha+\abs\gamma\le r}
 \norm{x^\alpha\partial^\gamma F}_{L^2(\dd x)}^2,
 \qquad \gamma_m(x):=e^{-\norm{x}_2^2/2}.
\tag{7.5}\label{eq:Shubin}
\]
The next lemma removes the Gaussian weight and replaces $\mathsf S_r$ by
equivalent Shubin and word\footnote{A word of length $k$ is an ordered composition
$A_1\cdots A_k$, where each $A_\ell$ is either a coordinate multiplication
operator $X_j$, defined by $(X_jF)(x)=x_jF(x)$, or a differentiation operator
$\partial_j$.  The empty word has length zero and denotes the identity
operator; $\abs A$ denotes the length of the word $A$.} norms.

\begin{lemma}[Gaussian conjugation]
\label{lem:conjugation}
Multiplication by $\gamma_m$ is an isomorphism from $\mathsf S_r(\R^m)$ onto
$\mathfrak Q^r(\R^m)$.  More precisely, writing $G=\gamma_mF$,
\[
 \norm F_{\mathsf S_r}
 \asymp_{m,r}\norm G_{\mathfrak Q^r}
 \asymp_{m,r}
 \left(
   \sum_{\substack{A\ \mathrm{word}\\\abs A\le r}}
   \norm{AG}_{L^2}^2
 \right)^{1/2}.
\tag{7.6}\label{eq:conjugation}
\]
\end{lemma}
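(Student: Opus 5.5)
We prove the two equivalences in \eqref{eq:conjugation} separately. The plan is to show first that each $\mathfrak Q^r$ seminorm of $G$ is controlled by $\mathsf S_r$ seminorms of $F$, and conversely. The word-norm equivalence is then a purely algebraic statement about reordering noncommuting operators.

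\textbf{Shubin norm versus word norm.} Each monomial $x^\alpha\partial^\gamma$ is itself a word of length $\abs\alpha+\abs\gamma$, so the Shubin norm is bounded by the word norm. For the converse, the commutation relation $[\partial_j,X_k]=\delta_{jk}$ lets us reorder any word $A$ of length $k\le r$ into normal-ordered form. The result is $x^\alpha\partial^\gamma$ with $\abs\alpha+\abs\gamma=k$, plus a linear combination of normal-ordered monomials of strictly smaller total length. The coefficients depend only on $m$ and $r$. Summing over the finitely many words and using the triangle inequality gives the comparison.

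\textbf{$\mathsf S_r$ versus Shubin norm.} We will compare the two norms directly by moving the Gaussian weight through the derivatives. Since $W_m=\pi^{-m/2}\gamma_m^2$, we have
\[
 \int\abs{t^\alpha\partial^\gamma F}^2W_m\dd t=\pi^{-m/2}\norm{\gamma_m\,x^\alpha\partial^\gamma F}_{L^2}^2 .
\]
Substituting $F=\gamma_m^{-1}G$, Leibniz's rule gives
\[
 \gamma_m\partial^\gamma(\gamma_m^{-1}G)=(\partial+x)^\gamma G,
\]
where $(\partial+x)^\gamma$ denotes $\prod_j(\partial_j+X_j)^{\gamma_j}$; this follows from $\gamma_m\partial_j\gamma_m^{-1}=\partial_j+X_j$. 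Consequently $\gamma_m x^\alpha\partial^\gamma F=x^\alpha(\partial+x)^\gamma G$. Expanded, this is a sum of at most $2^{\abs\gamma}$ words of length $\abs\alpha+\abs\gamma$ applied to $G$. Hence $\norm F_{\mathsf S_r}\lesssim$ the word norm of $G$, which is $\lesssim\norm G_{\mathfrak Q^r}$.

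For the reverse direction we use the inverse conjugation $\partial_j=\gamma_m(\partial_j-X_j)\gamma_m^{-1}$ applied to $F$, where we write $F=\gamma_m^{-1}G$. This gives
\[
 x^\alpha\partial^\gamma G=\gamma_m\,x^\alpha(\partial-x)^\gamma F .
\]
Expanding $(\partial-x)^\gamma$ produces words in $X_j,\partial_j$ acting on $F$. These must be normal-ordered, via the same commutator argument, into monomials $x^{\alpha'}\partial^{\gamma'}F$ with $\abs{\alpha'}+\abs{\gamma'}\le r$. Their $L^2(\gamma_m^2\dd x)$ norms are exactly the $\mathsf S_r$ terms. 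This yields $\norm G_{\mathfrak Q^r}\lesssim\norm F_{\mathsf S_r}$.

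\textbf{Functional-analytic points.} It remains to check that the maps are onto and well defined. Multiplication by $\gamma_m$ is a bijection on measurable functions, and the identities above hold for distributional derivatives. Both norms can be defined distributionally: $F\in\mathsf S_r$ means $F\in L^2_{\rm loc}$ with the weighted distributional derivatives finite, and similarly for $G\in\mathfrak Q^r$. Conjugation by the smooth nonvanishing function $\gamma_m$ preserves the distributional identities, so finiteness of one norm gives finiteness of the other, and the map is an isomorphism onto.

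\textbf{Main obstacle.} The only delicate point is justifying these manipulations without a priori decay. I would handle it by density. First prove the equivalences for $C_c^\infty$ functions. Then show $C_c^\infty$ is dense in $\mathfrak Q^r$, using standard cutoff-and-mollify arguments; the polynomial weights are harmless because the commutators of cutoffs with $\partial$ produce bounded factors. Finally transfer density to $\mathsf S_r$ via the bounded map just established. Everything else is bookkeeping of finitely many commutator constants depending on $(m,r)$.
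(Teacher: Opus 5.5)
Your proposal is correct and follows the paper's proof: you use the same conjugation identities $\gamma_m\partial_j\gamma_m^{-1}=\partial_j+X_j$ and its inverse to turn each $\mathsf S_r$ term into words applied to $G$ and back, and the same commutator $[\partial_j,X_k]=\delta_{jk}$ to normal-order words into Shubin monomials. The only difference is in how the function spaces are handled. The paper proves the estimates on $C_c^\infty$ and extends to graph-norm completions, whereas you work on distributional maximal domains, where the identities already hold exactly, so your additional density step is harmless but not needed for the equivalence.
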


\begin{proof}
First let $F\in C_c^\infty(\R^m)$ and set $G=\gamma_mF$.  For every
$\abs\alpha+\abs\gamma\le r$,
\[
 \norm{x^\alpha\partial^\gamma F}_{L^2(W_m)}
 =\pi^{-m/4}\norm{\gamma_mx^\alpha\partial^\gamma F}_2.
\]
The identities
\[
 \partial_j(\gamma_mF)=\gamma_m(\partial_j-x_j)F,
 \qquad
 \gamma_m\partial_jF=(\partial_j+x_j)(\gamma_mF).
\tag{7.7}\label{eq:conjugationid}
\]
iterated at most $r$ times express each function on the right of the first
display as a finite linear combination of $AG$, where $A$ is a word of
length at most $r$ in $X_j,\partial_j$.  The inverse identities express each
such $AG$ as a finite linear combination of
$\gamma_mx^\alpha\partial^\gamma F$ with
$\abs\alpha+\abs\gamma\le r$.  Hence
\[
 \norm F_{\mathsf S_r}
 \asymp_{m,r}
 \left(\sum_{\abs A\le r}\norm{AG}_2^2\right)^{1/2}.
\]
Repeated use of
$[\partial_j,X_k]=\one_{j=k}I$ expresses every word as a finite linear
combination of ordered monomials $x^\alpha\partial^\gamma$ of no greater
order; conversely, each ordered monomial is a word.  This proves the second
equivalence on $C_c^\infty$.  Since multiplication by $\gamma_m$ is a
bijection of $C_c^\infty$ and both inequalities hold in both directions, it
extends uniquely to an isomorphism between the corresponding graph-norm
completions, proving \eqref{eq:conjugation}.
\end{proof}

In one dimension, Lemma~\ref{lem:conjugation} and ordinary Sobolev embedding
give
\[
 \max_{A:\,\abs A\le3}\norm{AF}_{L^\infty}
 \le C\norm F_{\mathfrak Q^4},
\tag{7.8}\label{eq:wordembedding}
\]
where $A$ ranges over words in $X,\partial$.  Indeed, both $AF$ and
$\partial(AF)$ are finite sums of words of length at most four, and
$H^1(\R)\hookrightarrow L^\infty(\R)$.

\subsection{Linearized Hermite coercivity}

Gaussian conjugation has converted the weighted Sobolev norm into a graph
norm for multiplication and differentiation.  We now quantify the part of
the defect that is linear in the source
perturbations.  The next lemma shows that standardization prevents
cancellation in this linearization: a rotation with mixing strength
$\abs{sc}$ creates an $\mathsf S_3$ signal of size at least $\abs{sc}\eta$.
This is the leading term to be compared with the nonlinear remainder in the
next subsection.

\begin{lemma}[Linearized Hermite coercivity]
\label{lem:linearcoercivity}
Let $Z_1,Z_2$ be independent, centered, variance-one real random variables
with $\norm{Z_j}_{\psi_2}<\infty$, and set
\[
 f_j(t):=\E e^{itZ_j},\qquad
 h_j:=f_j-g,\qquad
 q_j:=\frac{h_j}{g}.
\]
Set
\[
 \eta^2:=\sum_{j=1}^2\norm{h_j}_{L^2(w)}^2.
\]
For $\theta\in\R$, put $c=\cos\theta$, $s=\sin\theta$, and define
\begin{align*}
 \mathcal L_{c,s}(q_1,q_2)(u,v)
 :={}&q_1(cu-sv)-q_1(cu)-q_1(-sv)\\
 &+q_2(su+cv)-q_2(su)-q_2(cv),\\
 D_{\mathrm{lin},\theta}(u,v)
 :={}&g(u)g(v)\mathcal L_{c,s}(q_1,q_2)(u,v).
\end{align*}
This is exactly the part of the rotated independence defect that is linear
in $(h_1,h_2)$.  There is a universal numerical constant $a_0>0$ such that
\[
 \norm{D_{\mathrm{lin},\theta}}_{\mathsf S_3(\R^2)}
 \ge a_0\abs{cs}\eta
 \qquad\text{for every }\theta\in\R.
\]
\end{lemma}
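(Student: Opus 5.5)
The plan is to bound the $\mathsf S_3(\R^2)$ norm from below by its zeroth-order term and reduce everything to a rotation-invariant Gaussian $L^2$ space. There the linear operator $\mathcal L_{c,s}$ acts diagonally on Hermite degrees.

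\textbf{Reduction to a rotation-invariant Gaussian space.} Since $\norm{D_{\mathrm{lin},\theta}}_{\mathsf S_3}\ge\norm{D_{\mathrm{lin},\theta}}_{L^2(W_2)}$ and $g(u)^2g(v)^2W_2(u,v)=\pi^{-1}e^{-2(u^2+v^2)}$, we have
\[
 \norm{D_{\mathrm{lin},\theta}}_{L^2(W_2)}=\norm{\mathcal L_{c,s}(q_1,q_2)}_{L^2(\mu_2)},
\]
where $\mu_m$ is the rotation-invariant Gaussian measure with density $\pi^{-m/2}e^{-2\norm{x}_2^2}$. Likewise $\norm{h_j}_{L^2(w)}=\norm{q_j}_{L^2(\mu_1)}$, so $\eta^2=\sum_j\norm{q_j}_{L^2(\mu_1)}^2$. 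Sub-Gaussianity makes $f_j$ entire with Gaussian growth. The generating identity $e^{itx+t^2/2}=\sum_k\He_k(x)(it)^k/k!$ then gives
\[
 q_j(t)=\sum_{k\ge3}\frac{(it)^k}{k!}\,\E\He_k(Z_j),
\]
so standardization says exactly that $q_j$ has no Taylor terms of order $0,1,2$. I would not try to use the extra derivatives in $\mathsf S_3$; if the $L^2$ bound below works, the stated inequality follows at once.

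\textbf{Hermite decomposition of ridge functions.} Expand $q_j=\sum_{k\ge0}a_{j,k}P_k$, where $P_k$ are the orthonormal Hermite polynomials for $\mu_1$. A term $q(e\cdot x)$ with unit vector $e$ expands into the polynomials $P_k(e\cdot x)$. These satisfy Mehler's identity
\[
 \ip{P_k(e\cdot x),P_l(e'\cdot x)}_{L^2(\mu_2)}=\one_{k=l}(e\cdot e')^k .
\]
Hence $\mathcal L_{c,s}(q_1,q_2)$ splits orthogonally by degree $k$. Its degree-$k$ piece is a combination of six ridge functions, with directions $(c,-s),(1,0),(0,1)$ carrying coefficient $a_{1,k}$ and $(s,c),(1,0),(0,1)$ carrying $a_{2,k}$, with signs $+,-,-$. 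Its squared norm is therefore an explicit Hermitian quadratic form $Q_k(c,s)$ in $(a_{1,k},a_{2,k})$, whose entries are built from the pairwise cosines, all of which lie in $\{0,\pm1,\pm c,\pm s\}$. Two checks are needed:
\begin{itemize}
 \item $Q_k$ vanishes identically when $cs=0$, since at $c=\pm1$ each ridge term reduces, up to reflection, to one of the subtracted terms.
 \item For each $k\ge3$, $Q_k\ge\kappa_k\,c^2s^2\,(\abs{a_{1,k}}^2+\abs{a_{2,k}}^2)$.
\end{itemize}
The ridge functions at degree $k$ are $P_k(ce_1-se_2)$, $P_k(e_1)$, $P_k(e_2)$ and their partners. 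For $k\ge3$ and $cs\ne0$ these are linearly independent, and the defect from independence is of order $\min(\abs c,\abs s)^{2}\gtrsim c^2s^2$ near the endpoints. For large $k$ the cross Gram entries $c^k,s^k$ are small, so $\kappa_k$ stays bounded below uniformly. This yields $\inf_{k\ge3}\kappa_k>0$ by combining the explicit large-$k$ estimate with a finite check for small $k$.

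\textbf{Main obstacle: degrees $0,1,2$.} For $k\le2$ the ridge pieces are degenerate: $(e\cdot x)^2$ lies in the span of $x_1^2,x_2^2,x_1x_2$, so $Q_k$ can vanish although $a_{j,k}\ne0$. Moreover $q_j$ generally has nonzero low Hermite coefficients even though its low Taylor coefficients vanish, and those coefficients contribute to $\eta$. Here I would use the constraints $q_j(0)=q_j'(0)=q_j''(0)=0$. These give three linear relations expressing $a_{j,0},a_{j,1},a_{j,2}$ as $\sum_{k\ge3}\lambda_{m,k}a_{j,k}$ with explicit coefficients $\lambda_{m,k}$, namely the values and derivatives of $P_k$ at $0$. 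The aim is to show
\[
 \sum_{k\le2}\abs{a_{j,k}}^2\le C\sum_{k\ge3}\abs{a_{j,k}}^2 .
\]
Cauchy--Schwarz would give this if $\sum_k\abs{\lambda_{m,k}}^2<\infty$. This square-summability is the delicate point, because $\abs{P_k(0)}\sim k^{-1/4}$ is not square-summable.

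If the $L^2$ route fails there, I would use the derivative terms of $\mathsf S_3$ instead. The weights $t^\alpha\partial^\gamma$ with $\abs\alpha+\abs\gamma\le3$ add factors of roughly $k^{3/2}$ to the degree-$k$ coefficients, which makes the coefficient sequences square-summable. This is presumably why the statement uses the $\mathsf S_3$ norm through the Hermite graph norm $\cH_3$. Combining the high-degree coercivity with this control of the low modes gives
\[
 \norm{\mathcal L_{c,s}(q_1,q_2)}^2\ge a_0^2c^2s^2\eta^2 .
\]
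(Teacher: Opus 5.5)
Your overall architecture matches the paper's: a Hermite expansion of $q_j$ for the weight $e^{-2t^2}$, a degree-wise $2\times2$ Gram bound for degrees $m\ge3$, and control of degrees $0,1,2$ through the traces $h_j(0)=h_j'(0)=h_j''(0)=0$, which only works with $\cH_3$ weights. However, the step where $\mathcal L_{c,s}(q_1,q_2)$ ``splits orthogonally by degree'' is false.

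\textbf{The marginal terms are dilations, not unit-direction ridge functions.} The four subtracted terms $q_1(cu)$, $q_1(-sv)$, $q_2(su)$, $q_2(cv)$ have directions $(c,0),(0,-s),(s,0),(0,c)$, of length $\abs c$ or $\abs s$, not $(1,0),(0,1)$. Mehler's identity needs unit vectors, and dilation does not preserve Hermite degree. For example, $\He_2(cx)=c^2\He_2(x)+(c^2-1)$, and in general $\He_k(cx)$ has components in degrees $k,k-2,k-4,\dots$. Hence the degree-$k$ component of $q_1(cu)$ involves $a_{1,k},a_{1,k+2},\dots$, and there is no $2\times2$ form $Q_k$ in $(a_{1,k},a_{2,k})$. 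With the directions you list, even your first check fails: at $s=0$ the term $-a_{1,k}P_k(v)$ has no partner. The true reason $\mathcal L_{c,s}$ vanishes there is $q_1(-sv)=q_1(0)=0$.

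The missing idea is the paper's projection $\Pi_{\rm mix}$ onto the closed span of $e_k\otimes e_\ell$ with $k,\ell\ge1$. It is a contraction, so it gives a lower bound. It annihilates all four one-variable terms exactly, leaving only the two genuinely rotated ridge terms. Their degree-$m$ mixed parts have Gram matrix with diagonal $1-c^{2m}-s^{2m}$ and off-diagonal $-2(sc)^m$ for even $m$ and $0$ for odd $m$. The least eigenvalue is therefore at least $3c^2s^2$ for every $m\ge3$. This replaces your heuristic ``linear independence plus a finite check'' with a uniform explicit bound.

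\textbf{The final combination does not close in the norm you start from.} You correctly note that $\sum_{k\le2}\abs{a_{j,k}}^2\le C\sum_{k\ge3}\abs{a_{j,k}}^2$ cannot be obtained in $L^2$, because point evaluations are unbounded there. With the weights it does hold: the right side becomes $C\sum_{k\ge3}(1+k)^3\abs{a_{j,k}}^2$. But then the high-degree coercivity must be proved with the same weights:
\[
 \norm{D_{\mathrm{lin},\theta}}_{\mathsf S_3(\R^2)}^2\gtrsim\sum_{m\ge3}(1+m)^3\,\boldsymbol a_m^*G_m\boldsymbol a_m,
 \qquad \boldsymbol a_m=(a_{1,m},a_{2,m})^\top .
\]
A bound by $c^2s^2\sum_j\norm{q_j^{>2}}_{L^2}^2$ cannot absorb a low-mode estimate stated in the $\cH_3$ norm, since $\norm{q_j^{>2}}_{\cH_3}$ may be much larger than $\norm{q_j^{>2}}_{L^2}$. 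So your opening reduction to the zeroth-order term must be dropped.

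In its place you need three facts. First, the two-variable equivalence $\norm{g(u)g(v)Q}_{\mathsf S_3(\R^2)}^2\asymp\sum_{k,\ell}(1+k+\ell)^3\abs{\widehat Q_{k,\ell}}^2$, obtained via Gaussian conjugation and creation/annihilation operators. Second, that $\Pi_{\rm mix}$ is diagonal in the Hermite basis and hence contractive in this norm. Third, that the rotated degree-$m$ piece has total degree exactly $m$. With these, the low-mode bound gives $c^2s^2\sum_j\norm{q_j^{>2}}_{\cH_3}^2\gtrsim c^2s^2\eta^2$, as in the paper.
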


\emph{Proof idea.}
Expand each quotient $q_j$ in Hermite polynomials.  Under rotation, a
one-dimensional term of degree $m$ becomes a sum of bivariate Hermite terms
whose two coordinate degrees add to $m$.  Removing the terms that depend on
only one coordinate leaves a $2\times2$ Gram matrix for the two source
coefficients.  Its least eigenvalue is at least $3c^2s^2$ for $m\ge3$.
The mass, mean, and variance identities control the remaining degrees
$0,1,2$.

\begin{proof}
Fix $\theta$ and abbreviate $c=\cos\theta$, $s=\sin\theta$.  The full defect
is
\begin{equation}
 D_\theta(u,v)
 :=f_1(cu-sv)f_2(su+cv)
 -f_1(cu)f_2(su)f_1(-sv)f_2(cv).
\label{eq:JM}
\end{equation}
The terms linear in $(h_1,h_2)$ in the first product are
\[
 g(u)g(v)\{q_1(cu-sv)+q_2(su+cv)\},
\]
whereas those in the product of marginals are
\[
 g(u)g(v)\{q_1(cu)+q_2(su)+q_1(-sv)+q_2(cv)\}.
\]
Their difference is the stated $D_{\mathrm{lin},\theta}$.
Put
\[
 \dd\mu(t):=\sqrt{\frac{2}{\pi}}e^{-2t^2}\dd t,
 \qquad
 e_m(t):=\frac{\He_m(2t)}{\sqrt{m!}},\quad m\ge0.
\]
We use the probabilists' Hermite polynomials, characterized by
\[
 \sum_{m=0}^{\infty}\He_m(x)\frac{z^m}{m!}=e^{xz-z^2/2}.
\]
Then $(e_m)_{m\ge0}$ is an orthonormal basis of $L^2(\mu)$, and
\[
 \norm{h_j}_{L^2(w)}^2
 =2^{-1/2}\norm{q_j}_{L^2(\mu)}^2.
\]
Write $q_j=\sum_{m\ge0}\alpha_{j,m}e_m$ and, for
$q=\sum_{m\ge0}\alpha_me_m$, set
\[
 \norm q_{\cH_3}^2
 :=\sum_{m\ge0}(1+m)^3\abs{\alpha_m}^2.
\]
To compare the Hermite coefficients with the $\mathsf S_3$ norm, define
operators on smooth functions by
\[
 (\mathsf Xq)(t):=tq(t),
 \qquad
 (\mathsf Yq)(t):=q'(t)-tq(t).
\]
Let $\mathcal W_3(q)^2$ be the sum of the squared $L^2(\mu)$ norms of
$Aq$ over all words $A$ of length at most three in $\mathsf X,\mathsf Y$.
Since
\[
 g\,\mathsf Xq=t(gq),
 \qquad
 g\,\mathsf Yq=\partial_t(gq),
 \qquad
 [\mathsf Y,\mathsf X]=I,
\]
the ordered monomials defining $\mathsf S_3$ and these word operators span
one another with universal coefficients.  Hence
\[
 \norm{gq}_{\mathsf S_3(\R)}\asymp\mathcal W_3(q).
\]
The pair $(\mathsf X,\mathsf Y)$ is an invertible linear combination of
\[
 \mathfrak a_-:=\tfrac12\partial_t,
 \qquad
 \mathfrak a_+:=2t-\tfrac12\partial_t,
\]
and
\[
 \mathfrak a_-e_m=\sqrt m\,e_{m-1},\qquad
 \mathfrak a_+e_m=\sqrt{m+1}\,e_{m+1}.
\]
For every finite Hermite sum, these identities imply
\[
 \mathcal W_3(q)^2
 \le C\sum_{m\ge0}(1+m)^3\abs{\alpha_m}^2.
\]
Conversely,
\[
 \norm{\mathfrak a_-^3q}_{L^2(\mu)}^2
 =\sum_{m\ge3}m(m-1)(m-2)\abs{\alpha_m}^2,
\]
so the empty-word term gives
\[
 \sum_{m\ge0}(1+m)^3\abs{\alpha_m}^2
 \le C\{\norm q_{L^2(\mu)}^2
          +\norm{\mathfrak a_-^3q}_{L^2(\mu)}^2\}
 \le C\mathcal W_3(q)^2.
\]
Define the common maximal graph domain
\[
 \mathcal D_3
 :=\left\{q\in L^2(\mu):
 Aq\in L^2(\mu)\ \text{distributionally for every word }
 A\ \text{with }\abs A\le3\right\}.
\]
Let $q_N$ be the $N$th Hermite truncation of $q$.  If $q\in\cH_3$, the
upper estimate makes $(Aq_N)_N$ Cauchy in $L^2(\mu)$ for every word
$\abs A\le3$.  Since $q_N\to q$ in $L^2(\mu)$, the corresponding limits
equal $Aq$ distributionally; hence $q\in\mathcal D_3$.  Conversely, if
$q\in\mathcal D_3$, then $\mathfrak a_-^3q\in L^2(\mu)$ because
$\mathfrak a_-$ is a linear combination of $\mathsf X$ and $\mathsf Y$.
Writing $q=\sum_{m\ge0}\alpha_me_m$, the distributional adjoint identity
gives
\[
 \left\langle\mathfrak a_-^3q,e_k\right\rangle
 =\sqrt{(k+1)(k+2)(k+3)}\,\alpha_{k+3}.
\]
Parseval and the empty-word term therefore imply
$\sum_m(1+m)^3\abs{\alpha_m}^2<\infty$, so $q\in\cH_3$.
Thus $\mathcal D_3=\cH_3$ with equivalent graph norms.  Therefore
\[
 \norm{gq}_{\mathsf S_3(\R)}\asymp\norm q_{\cH_3}.
\]
Finally, $\abs{f_j^{(b)}(t)}\le\E\abs{Z_j}^b$ for $0\le b\le3$.
Gaussian integrability and the finite sub-Gaussian moments give
$h_j\in\mathsf S_3$ and hence $q_j\in\cH_3$.

For
\[
 Q=\sum_{k,\ell\ge0}\widehat Q_{k,\ell}e_k\otimes e_\ell,
 \qquad
 \norm Q_{\cH_3^{(2)}}^2
 :=\sum_{k,\ell\ge0}(1+k+\ell)^3
   \abs{\widehat Q_{k,\ell}}^2,
\]
the tensorized equivalence is
\[
 \norm{g(u)g(v)Q(u,v)}_{\mathsf S_3(\R^2)}
 \asymp\norm Q_{\cH_3^{(2)}}.
\]

Unit mass, centering, and unit variance give
\begin{equation}
 h_j(0)=h_j'(0)=h_j''(0)=0.
\label{eq:traces}
\end{equation}
Let $\Pi_{\rm mix}$ be the orthogonal projection onto the span of
$e_k\otimes e_\ell$ with $k,\ell\ge1$.  It removes all one-coordinate terms
from $\mathcal L_{c,s}$.  The Hermite addition formulas are
\begin{align*}
 e_m(cu-sv)
 &=\sum_{k=0}^m\binom{m}{k}^{1/2}
   c^k(-s)^{m-k}e_k(u)e_{m-k}(v),\\
 e_m(su+cv)
 &=\sum_{k=0}^m\binom{m}{k}^{1/2}
   s^kc^{m-k}e_k(u)e_{m-k}(v).
\end{align*}
Set
\[
 v_{1,m}:=\Pi_{\rm mix}e_m(cu-sv),\qquad
 v_{2,m}:=\Pi_{\rm mix}e_m(su+cv).
\]
Set $G_0:=0_{2\times2}$.  For $m\ge1$, their Gram matrix is
\[
 G_m=
 \begin{pmatrix}A_m&B_m\\B_m&A_m\end{pmatrix},\qquad
 A_m=1-c^{2m}-s^{2m},\qquad
 B_m=\begin{cases}
 0,&m\text{ odd},\\
 -2(sc)^m,&m\text{ even},
 \end{cases}
\]
because
\[
 \langle v_{1,m},v_{2,m}\rangle
 =(sc)^m\sum_{k=1}^{m-1}\binom{m}{k}(-1)^{m-k}.
\]
Thus the least eigenvalue is
\[
 \lambda_m=
 \begin{cases}
 1-c^{2m}-s^{2m},&m\text{ odd},\\[1mm]
 1-(\abs c^m+\abs s^m)^2,&m\text{ even}.
 \end{cases}
\]
The matrices $G_m$ are positive semidefinite, $\lambda_1=\lambda_2=0$,
and, for every $m\ge3$,
\[
 \lambda_m\ge3c^2s^2.
\]
Indeed, with $x=c^2$, $y=s^2$, and $x+y=1$, odd $m\ge3$ give
$1-x^m-y^m\ge1-x^3-y^3=3xy$.  For even $m=2k\ge4$,
\[
 1-(x^k+y^k)^2
 \ge1-(x^2+y^2)^2
 =4xy(1-xy)\ge3xy.
\]

It remains to control degrees zero, one, and two.  Decompose
\[
 q_j=q_j^{\le2}+q_j^{>2},\qquad
 q_j^{\le2}\in K_2:=\operatorname{span}\{e_0,e_1,e_2\},\qquad
 q_j^{>2}\perp K_2,
\]
and define
\[
 \mathcal Eq:=\bigl((gq)(0),(gq)'(0),(gq)''(0)\bigr).
\]
In the basis $(e_0,e_1,e_2)$, the restriction of $\mathcal E$ to $K_2$
has matrix
\[
 \begin{pmatrix}
 1&0&-1/\sqrt2\\
 0&2&0\\
 -1&0&9/\sqrt2
 \end{pmatrix},
\]
whose determinant is $8\sqrt2$.  The evaluation map $\mathcal E$ is
continuous on $\cH_3$
by the graph-norm equivalence and $H^3(-1,1)\hookrightarrow C^2([-1,1])$.
Since \eqref{eq:traces} implies $\mathcal Eq_j=0$,
\[
 \mathcal Eq_j^{\le2}=-\mathcal Eq_j^{>2}.
\]
Boundedness of $(\mathcal E|_{K_2})^{-1}$ and continuity of
$\mathcal E:\cH_3\to\mathbb C^3$ therefore give
\begin{equation}
 \norm{q_j^{\le2}}_{\cH_3}
 \le C\norm{\mathcal Eq_j^{\le2}}_{\mathbb C^3}
 =C\norm{\mathcal Eq_j^{>2}}_{\mathbb C^3}
 \le C\norm{q_j^{>2}}_{\cH_3}.
\label{eq:lowcontrol}
\end{equation}

The projection $\Pi_{\rm mix}$ is a contraction on $\cH_3^{(2)}$, and
different total Hermite degrees are orthogonal.  Hence, with
$\boldsymbol\alpha_m=(\alpha_{1,m},\alpha_{2,m})^\top$,
\begin{align*}
 \norm{D_{\mathrm{lin},\theta}}_{\mathsf S_3}^2
 &\gtrsim
 \sum_{m\ge0}(1+m)^3
 \boldsymbol\alpha_m^*G_m\boldsymbol\alpha_m\gtrsim c^2s^2
 \sum_{j=1}^2\norm{q_j^{>2}}_{\cH_3}^2
 \gtrsim c^2s^2
 \sum_{j=1}^2\norm{q_j}_{L^2(\mu)}^2
 \asymp c^2s^2\eta^2.
\end{align*}
Here the penultimate inequality follows from \eqref{eq:lowcontrol}.
Taking square roots proves the claim, uniformly in $\theta$.
\end{proof}

\subsection{Uniform product estimates and the nonlinear remainder}

Lemma~\ref{lem:linearcoercivity} controls the linear term, but the theorem
concerns the full defect, so we also need to bound the terms containing two or
more source perturbations while retaining their vanishing factor
$\abs{sc}$.  The next two lemmas control products evaluated at scaled or
rotated arguments; Proposition~\ref{prop:nonlinear} then applies those
bounds to the exact nonlinear remainder.  We call each one-dimensional
factor in such a product a \emph{slot}.

\begin{lemma}[Two scaled slots]
\label{lem:scaledslots}
Fix $c_*>0$.  For $\theta\in\R$, write
$c_\theta=\cos\theta$, $s_\theta=\sin\theta$.  If
$\abs{c_\theta}\ge c_*$ and $F,G\in\mathfrak Q^4(\R)$, then, for
$P_\theta(x)=F(c_\theta x)G(s_\theta x)$,
\[
 \norm{P_\theta}_{\mathfrak Q^3}
 +\norm{\partial_\theta P_\theta}_{\mathfrak Q^3}
 \le C(c_*)\norm F_{\mathfrak Q^4}\norm G_{\mathfrak Q^4}.
\tag{7.24}\label{eq:scaledslots}
\]
The same statement holds with $s_\theta x$ replaced by $-s_\theta x$ and with the slots
interchanged.
\end{lemma}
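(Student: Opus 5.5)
We work in the word form of the $\mathfrak Q^3$ norm from Lemma~\ref{lem:conjugation}: it suffices to bound $\norm{AP_\theta}_{L^2}$ and $\norm{A\,\partial_\theta P_\theta}_{L^2}$ for every word $A$ in $X,\partial$ of length at most three. The structural fact is that $\abs{c_\theta}\ge c_*$ lets the $F$ slot carry both the $L^2$ integrability and all the weights, while the $G$ slot, whose scale $s_\theta$ may vanish, only has to be bounded pointwise. Pointwise control of $G$ and its low words comes from the embedding \eqref{eq:wordembedding}.

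\textbf{The bound on $P_\theta$.} Expanding $AP_\theta$ by the Leibniz rule with the chain rule $\partial[F(c x)]=cF'(cx)$, every term has the form
\[
 x^{p}\,c^{k}s^{l}\,F^{(k)}(cx)\,G^{(l)}(sx),
 \qquad p+k+l\le3 .
\]
We place all powers of $x$ on the $F$ slot by writing $x^p=c^{-p}(cx)^p$, at a cost of $c_*^{-3}$. Then
\[
 \norm{x^pc^ks^lF^{(k)}(c\,\cdot)G^{(l)}(s\,\cdot)}_{L^2}
 \le c_*^{-p}\,\norm{G^{(l)}}_{L^\infty}\,
 \norm{(cx)^pF^{(k)}(cx)}_{L^2(\dd x)}.
\]
The substitution $y=cx$ gives
\[
 \norm{(cx)^pF^{(k)}(cx)}_{L^2(\dd x)}=\abs c^{-1/2}\norm{y^pF^{(k)}}_{L^2}\le c_*^{-1/2}\norm F_{\mathfrak Q^3}.
\]
Since $l\le3$, \eqref{eq:wordembedding} gives $\norm{G^{(l)}}_{L^\infty}\le C\norm G_{\mathfrak Q^4}$. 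This yields the bound on $\norm{P_\theta}_{\mathfrak Q^3}$, using only $\mathfrak Q^3$ regularity of $F$.

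\textbf{The $\theta$-derivative.} We compute
\[
 \partial_\theta P_\theta=-s\,xF'(cx)G(sx)+c\,F(cx)\,xG'(sx).
\]
The first term is $P$-type with $F$ replaced by $-(s/c)\,yF'(y)$. Since $\abs{s/c}\le c_*^{-1}$ and $\norm{yF'}_{\mathfrak Q^3}\le C\norm F_{\mathfrak Q^4}$, the previous step bounds it. The second term contains $xG'(sx)$, which is the main obstacle. Writing $x=c^{-1}(cx)$ would move the extra factor $x$ onto $F$. After up to three further words act, however, this yields $\norm{y^4F}_{L^2}$-type terms, still within $\mathfrak Q^4$. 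So we use $c\,x\,F(cx)G'(sx)=(cx)F(cx)\,G'(sx)$, which is $P$-type with $\widetilde F(y)=yF(y)$ and $\widetilde G=G'$. The previous step needs $\widetilde F\in\mathfrak Q^3$, which holds because $\norm{yF}_{\mathfrak Q^3}\le C\norm F_{\mathfrak Q^4}$. It also needs $L^\infty$ bounds on $\widetilde G^{(l)}=G^{(l+1)}$ for $l\le3$, that is, on $G^{(4)}$. This is one derivative more than \eqref{eq:wordembedding} supplies from $\mathfrak Q^4$, so some care is required.

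\textbf{Removing the extra derivative of $G$.} We avoid the sup norm in the top-order terms. In any term where $G$ carries four derivatives, all three word letters have become derivatives hitting $G$, so no power of $x$ remains beyond the one placed on $F$. That term is
\[
 c\,s^3\,(cx)F(cx)\,G^{(4)}(sx),
\]
and we estimate it as $\norm{yF}_{L^\infty}\cdot\abs{s}^3\norm{G^{(4)}(s\,\cdot)}_{L^2}$. The factor $\norm{yF}_{L^\infty}\le C\norm F_{\mathfrak Q^4}$ follows from \eqref{eq:wordembedding}. For the second factor,
\[
 \abs s^{3}\norm{G^{(4)}(s\,\cdot)}_{L^2}=\abs s^{5/2}\norm{G^{(4)}}_{L^2}\le\norm G_{\mathfrak Q^4},
\]
which is harmless even as $s\to0$ because of the positive power $\abs s^{5/2}$. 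The same splitting, putting $L^2$ on whichever slot carries the top word and $L^\infty$ on the other, handles every remaining term with total word order four on one slot. Each factor of $s$ that comes from the chain rule on the $G$ slot compensates the Jacobian $\abs s^{-1/2}$.

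\textbf{Variants.} Replacing $s_\theta x$ by $-s_\theta x$ amounts to replacing $G$ by $G(-\cdot)$, which has the same $\mathfrak Q^4$ norm. Interchanging the slots only reorders the product, and the argument above never used which factor is written first, so the same bounds hold.
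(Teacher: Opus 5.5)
Your bound on $P_\theta$ is correct, and so is your treatment of the second summand $(cx)F(cx)\,G'(sx)$ of $\partial_\theta P_\theta$. There you isolate the single term with $s^3$ and four derivatives on the $G$ slot, and bound it with $L^\infty$ on the $F$ slot and $L^2$ on the $G$ slot, gaining $\abs{s}^{5/2}$; this is exactly the device in the paper's proof.

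The gap is in the first summand $-s\,xF'(cx)G(sx)$. You rewrite $sx=(s/c)(cx)$, which puts both the factor $y$ and the derivative on the $F$ slot, and then invoke $\norm{yF'}_{\mathfrak Q^3}\le C\norm{F}_{\mathfrak Q^4}$. That inequality is false. The function $yF'$ already uses two word letters, so its $\mathfrak Q^3$ norm contains length-five quantities such as $\norm{y^4F'}_{L^2}$. This is precisely what the word $X^3$ produces under your convention of putting every power of $x$ on the $F$ slot. Concretely, for the normalized Hermite functions $\varphi_m$, \eqref{eq:ShubinHermite} gives $\norm{y\varphi_m'}_{\mathfrak Q^3}\asymp m^{5/2}$ while $\norm{\varphi_m}_{\mathfrak Q^4}\asymp m^{2}$. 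The check you made for the second summand (that $y^4F$ is still a length-four word) does not transfer, because here the $F$ slot starts with two letters rather than one.

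The repair is to leave that factor on the $G$ slot. The paper does this by writing $\partial_\theta P_\theta=-F'(z)[yG(y)]+[zF(z)]G'(y)$ with $z=cx$, $y=sx$, so that each summand carries exactly one letter in each slot. Treat the first summand as $P$-type with $\widetilde F=F'$ and $\widetilde G=yG$, and apply at most three further letters, placing multiplications on the $F$ slot as in your scheme.
\begin{itemize}
\item The $F$ slot then carries $y^pF^{(k+1)}$ with $p+k+1\le4$, which is fine in $L^2$.
\item The $G$ slot carries $(yG)^{(l)}$, a word of length $l+1$. For $l\le2$ it is bounded in $L^\infty$ by \eqref{eq:wordembedding}.
\item The remaining case $l=3$ forces $p=k=0$ and gives, up to sign, $s^3F'(cx)\,(yG)'''(sx)$. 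This term is bounded by $\norm{F'}_{L^\infty}\abs{s}^{5/2}\norm{(yG)'''}_{L^2}\le C\norm F_{\mathfrak Q^4}\norm G_{\mathfrak Q^4}$, the same estimate you used for $G^{(4)}$.
\end{itemize}
With this change the argument closes. You should also add the routine density step from Schwartz functions to general $F,G\in\mathfrak Q^4$, as the paper does.
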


\begin{proof}
First take $F,G$ in the Schwartz class $\cS(\R)$; the general case follows by smooth
cutoff and mollification, because Schwartz functions are dense in
$\mathfrak Q^4$.  Abbreviate $c=c_\theta$, $s=s_\theta$ and put
$z=cx,y=sx$.  Expanding a word in $X,\partial_x$ by the identities
\[
 x=cz+sy,\qquad \partial_x=c\partial_z+s\partial_y
\tag{7.25}\label{eq:slotidentities}
\]
shows that a word of length at most three is a finite linear combination,
with coefficients bounded by a numerical constant, of terms
$(AF)(cx)(BG)(sx)$ with $\abs A+\abs B\le3$.  By
\eqref{eq:wordembedding} and $\abs c\ge c_*$,
\[
 \norm{(AF)(c\,\cdot)(BG)(s\,\cdot)}_2
 \le \norm{(AF)(c\,\cdot)}_2\norm{BG}_\infty
 \le C(c_*)\norm F_{\mathfrak Q^4}\norm G_{\mathfrak Q^4}.
\tag{7.26}\label{eq:slotbasic}
\]

Since $\dot c=-s$ and $\dot s=c$,
\[
 \partial_\theta P_\theta
 =-F'(z)[yG(y)]+[zF(z)]G'(y).
\tag{7.27}\label{eq:slotangular}
\]
Applying a word of length at most three to \eqref{eq:slotangular} yields
terms $(AF)(cx)(BG)(sx)$ with
$0\le\abs A,\abs B\le4$ and $\abs A+\abs B\le5$.  If $\abs B\le3$, then
\[
 \norm{(AF)(c\,\cdot)(BG)(s\,\cdot)}_2
 \le C(c_*)\norm F_{\mathfrak Q^4}\norm G_{\mathfrak Q^4},
\]
by the same $L^2$--$L^\infty$ argument as in \eqref{eq:slotbasic}.  Each
summand in \eqref{eq:slotangular} already contains one word letter in each
slot.  Hence, if $\abs B=4$, then $\abs A\ge1$; because
$\abs A+\abs B\le5$, necessarily $\abs A=1$.  All three additional
operators must then contribute their second-slot terms in
\eqref{eq:slotidentities}, so the coefficient contains $s^3$.  For
$s\ne0$,
\[
 \abs s^3\norm{(AF)(c\,\cdot)(BG)(s\,\cdot)}_2
 \le \abs s^3\norm{AF}_\infty\norm{(BG)(s\,\cdot)}_2
 \le C\abs s^{5/2}\norm F_{\mathfrak Q^4}\norm G_{\mathfrak Q^4}.
\tag{7.28}\label{eq:slotsmall}
\]
At $s=0$ the corresponding coefficients are exactly zero.  Summing the
finitely many terms proves the estimate for Schwartz functions.  Density in
$\mathfrak Q^4$ and the same bilinear bounds extend both $P_\theta$ and its
displayed derivative to general $F,G\in\mathfrak Q^4$.
\end{proof}

\begin{lemma}[Orthogonally rotated slots]
\label{lem:rotatedslots}
For $\theta\in\R$, write $c_\theta=\cos\theta$ and
$s_\theta=\sin\theta$.  If
\[
 K_\theta(u,v)=F(c_\theta u-s_\theta v)
 G(s_\theta u+c_\theta v),\qquad F,G\in\mathfrak Q^4(\R),
\]
then
\[
 \norm{K_\theta}_{\mathfrak Q^3(\R^2)}
 +\norm{\partial_\theta K_\theta}_{\mathfrak Q^3(\R^2)}
 \le C\norm F_{\mathfrak Q^4}\norm G_{\mathfrak Q^4}.
\tag{7.29}\label{eq:rotatedslots}
\]
\end{lemma}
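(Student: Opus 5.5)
The plan mirrors the proof of Lemma~\ref{lem:scaledslots}, with a rotation of the plane replacing the two scalings. First I take $F,G\in\cS(\R)$ and pass to general $F,G\in\mathfrak Q^4(\R)$ at the end, by density together with the bilinear bounds obtained below. I introduce the rotated coordinates
\[
 z=c u-s v,\qquad y=s u+c v,
\]
where $c=c_\theta$, $s=s_\theta$. Since the map $(u,v)\mapsto(z,y)$ is orthogonal with unit Jacobian, $L^2(\R^2)$ norms are preserved under this change of variables. In the new coordinates $K_\theta(u,v)=F(z)G(y)$ is a pure tensor product.

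The word operators transform linearly with bounded coefficients:
\[
 u=c z+s y,\qquad v=-s z+c y,\qquad \partial_u=c\partial_z+s\partial_y,\qquad \partial_v=-s\partial_z+c\partial_y.
\]
So a word of length at most three in $X_1,X_2,\partial_1,\partial_2$ applied to $K_\theta$ expands into a finite combination of terms $(AF)(z)(BG)(y)$. Here $A,B$ are one-dimensional words with $\abs A+\abs B\le3$, and the coefficients are bounded by numerical constants. Fubini in $(z,y)$ then gives
\[
 \norm{(AF)(z)(BG)(y)}_{L^2(\R^2)}=\norm{AF}_{L^2(\R)}\norm{BG}_{L^2(\R)}\le C\norm F_{\mathfrak Q^3}\norm G_{\mathfrak Q^3}.
\]
With Lemma~\ref{lem:conjugation}'s equivalence between word norms and Shubin norms, this bounds $\norm{K_\theta}_{\mathfrak Q^3}$. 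No constant $c_*$ is needed, unlike the scaled case, because the change of variables is an isometry rather than a dilation.

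For the angular derivative, I differentiate using $\dot c=-s$ and $\dot s=c$:
\[
 \partial_\theta z=-(s u+c v)=-y,\qquad \partial_\theta y=c u-s v=z.
\]
Hence
\[
 \partial_\theta K_\theta=-F'(z)\,[yG(y)]+[zF(z)]\,G'(y).
\]
This is a sum of two tensor products in which each slot already carries one letter. Applying a word of length at most three and expanding as before produces terms $(AF)(z)(BG)(y)$ with $\abs A,\abs B\le4$, since each slot starts with one letter and at most three more are added. The same Fubini identity bounds each term by $\norm{AF}_2\norm{BG}_2\le\norm F_{\mathfrak Q^4}\norm G_{\mathfrak Q^4}$. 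Because the $L^2$ factorization is exact, the $L^\infty$ embedding \eqref{eq:wordembedding} is not even needed here.

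The main point requiring care is the density and extension step. I have to justify that $\theta\mapsto K_\theta$ is differentiable in $\mathfrak Q^3(\R^2)$ and that its derivative equals the displayed formula for general $F,G\in\mathfrak Q^4$. I would establish this for Schwartz functions first. I would then approximate $F,G$ in $\mathfrak Q^4$ and use the bilinear bound, which is uniform in $\theta$, to get locally uniform convergence of both $K_\theta$ and $\partial_\theta K_\theta$. Writing $K_{\theta}-K_{\theta_0}=\int_{\theta_0}^{\theta}\partial_\vartheta K_\vartheta\dd\vartheta$ and passing to the limit in this integral identity then identifies the derivative.
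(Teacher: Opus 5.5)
Your proposal is correct and follows the paper's proof essentially step for step. Both arguments pass to the orthogonal coordinates so that $K_\theta$ becomes a pure tensor product, re-express the words with bounded coefficients, and factor the $L^2(\R^2)$ norms by Fubini. Both also use the same angular-derivative formula, in which each slot already carries one letter, so every tensor factor has word length at most four. Your explicit density and extension step to general $F,G\in\mathfrak Q^4$ is extra care that the paper defers to Lemma~\ref{lem:angularregularity}.
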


\begin{proof}
Abbreviate $c=c_\theta$, $s=s_\theta$ and use the orthogonal coordinates
$x=cu-sv,y=su+cv$.  Under this change, each multiplication or derivative
operator in $(u,v)$ is a linear combination, with coefficients bounded by
one, of the corresponding operators in $(x,y)$.  Expanding the finitely many
words of length at most three and applying Fubini therefore gives
\[
 \norm{K_\theta}_{\mathfrak Q^3(\R^2)}
 \le C\norm F_{\mathfrak Q^3}\norm G_{\mathfrak Q^3}.
\]
Moreover,
\[
 \partial_\theta K_\theta=-F'(x)[yG(y)]+[xF(x)]G'(y).
\]
After at most three further word letters are applied, each tensor factor has
word length at most four.  Its $L^2(\R^2)$ norm factors into the product of
the one-dimensional norms, proving \eqref{eq:rotatedslots}.
\end{proof}

We shall also use, directly from Fubini's theorem and the word norm,
\[
 \norm{U(u)V(v)}_{\mathfrak Q^3(\R^2)}
 \le C\norm U_{\mathfrak Q^3(\R)}\norm V_{\mathfrak Q^3(\R)}.
\tag{7.30}\label{eq:tensorq}
\]

The estimates above also supply the strong angular regularity needed when we
integrate an angular derivative or take a limit at an unmixed angle.

\begin{lemma}[Joint angular regularity of product maps]
\label{lem:angularregularity}
Fix $c_*>0$, and let $I$ be a connected component of
$\{\theta:\abs{\cos\theta}\ge c_*\}$.  The maps
\begin{align*}
 (F,G,\theta)&\longmapsto F(\cos\theta\,\cdot)G(\sin\theta\,\cdot)
 &&\text{from }\mathfrak Q^4(\R)^2\times I
   \text{ to }\mathfrak Q^3(\R),\\
 (F,G,\theta)&\longmapsto
 F(\cos\theta\,u-\sin\theta\,v)
 G(\sin\theta\,u+\cos\theta\,v)
 &&\text{from }\mathfrak Q^4(\R)^2\times I
   \text{ to }\mathfrak Q^3(\R^2)
\end{align*}
are jointly continuous.  Their angular derivatives, given respectively by
\eqref{eq:slotangular} and the derivative displayed in the proof of
Lemma~\ref{lem:rotatedslots}, are jointly continuous into the same target
spaces.  Consequently, for fixed $F,G$, both angular paths are strongly
$C^1$.  The same conclusions hold after a sign change, an interchange of
the two slots, or a tensor product of two one-dimensional scaled-slot maps.
\end{lemma}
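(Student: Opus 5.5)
The plan is the standard "bounded multilinear map plus density" argument, carried out in three steps.

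\textbf{Step 1: reduce to continuity in the angle on a dense set.} Write $\Psi(F,G,\theta)$ for either product map and $\Psi'(F,G,\theta)$ for its angular derivative, given by \eqref{eq:slotangular} or the analogous rotated formula. For fixed $\theta$ each map is bilinear in $(F,G)$. Lemmas~\ref{lem:scaledslots} and~\ref{lem:rotatedslots} give the bound
\[
\norm{\Psi(F,G,\theta)}_{\mathfrak Q^3}+\norm{\Psi'(F,G,\theta)}_{\mathfrak Q^3}\le C(c_*)\norm F_{\mathfrak Q^4}\norm G_{\mathfrak Q^4},
\]
uniformly for $\theta\in I$. Bilinearity then yields
\[
\norm{\Psi(F,G,\theta)-\Psi(F_0,G_0,\theta)}\le C\bigl(\norm{F-F_0}\norm G+\norm{F_0}\norm{G-G_0}\bigr),
\]
and the same holds for $\Psi'$. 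So joint continuity at $(F_0,G_0,\theta_0)$ follows once we know that, for each fixed pair in a dense subclass, $\theta\mapsto\Psi(F,G,\theta)$ and $\theta\mapsto\Psi'(F,G,\theta)$ are continuous into $\mathfrak Q^3$. We argue by $3\eps$: approximate $(F_0,G_0)$ by Schwartz functions $(\tilde F,\tilde G)$, use the uniform bound to control the approximation error for $\theta$ near $\theta_0$, and use continuity of the Schwartz path.

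\textbf{Step 2: continuity for Schwartz data.} Take $F,G\in\cS(\R)$. Applying a word of length at most three gives a finite sum of terms $(AF)(cx)(BG)(sx)$ with coefficients polynomial in $(c,s)$. Each such term is a smooth function of $(\theta,x)$. Because $\abs{c}\ge c_*$ on $I$, it is dominated by $C_N(1+\abs x)^{-N}$ uniformly in $\theta$: the factor $AF(cx)$ decays rapidly since $c$ is bounded away from zero, and $BG$ is bounded. Dominated convergence therefore gives $L^2$ continuity in $\theta$. The same reasoning applies to $\Psi'$, since its formula involves only the Schwartz functions $F'$, $yG$, $zF$, $G'$. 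In the rotated case the decay is automatic because $(u,v)\mapsto(x,y)$ is orthogonal.

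\textbf{Step 3: $C^1$ and the remaining variants.} Strong $C^1$ follows by writing, for Schwartz data,
\[
\Psi(\theta_1)-\Psi(\theta_0)=\int_{\theta_0}^{\theta_1}\Psi'(\theta)\dd\theta
\]
as a pointwise identity that holds as a Bochner integral in $\mathfrak Q^3$, using the continuity from Step~2. Both sides are bounded bilinear in $(F,G)$ uniformly on $I$, so the identity extends by density to general $F,G\in\mathfrak Q^4$. Continuity of the integrand then gives strong differentiability with derivative $\Psi'$. Sign changes and slot interchanges are covered by the same lemmas. For tensor products, \eqref{eq:tensorq} reduces everything to the one-dimensional statements, with the Leibniz rule supplying the derivative.

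\textbf{Main obstacle.} The delicate point is the derivative term in Lemma~\ref{lem:scaledslots} in which four letters fall on the $G$ slot evaluated at $sx$. There the $L^2$ norm of $(BG)(s\,\cdot)$ scales like $\abs s^{-1/2}$, so uniform domination as $s\to0$ fails. I would handle this term separately. Its coefficient is $s^3$, and the bound \eqref{eq:slotsmall} shows it is $O(\abs s^{5/2})$ uniformly over bounded $(F,G)$ in $\mathfrak Q^4$. So it tends to $0$ in $\mathfrak Q^3$ as $s\to0$, which is its value at $s=0$. Away from $s=0$, dominated convergence applies as in Step~2.
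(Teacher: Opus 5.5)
Your proposal is correct and follows essentially the same route as the paper: you first get continuity in $\theta$ for Schwartz data by dominated convergence (decay from the first slot since $\abs{\cos\theta}\ge c_*$, boundedness of the second), then extend to $\mathfrak Q^4$ using the uniform bilinear bounds of Lemmas~\ref{lem:scaledslots}--\ref{lem:rotatedslots}, and pass the fundamental-theorem-of-calculus identity to the limit to obtain strong $C^1$. The ``main obstacle'' you flag does not actually arise: for Schwartz $G$ every $BG$ is bounded, so the domination in Step~2 is already uniform as $s\to0$, and for general data the $\abs s^{5/2}$ estimate is already built into the uniform bound you invoke in Step~1.
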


\begin{proof}
First let $F,G$ belong to the Schwartz class $\cS(\R)$.  For the scaled-slot map, every word of order at
most three applied to the map or its angular derivative is a finite sum of
terms of the form
\[
 x^r F^{(a)}(\cos\theta\,x)G^{(b)}(\sin\theta\,x),
\]
with fixed finite ranges of $r,a,b$.  Since
$\abs{\cos\theta}\ge c_*$ on $I$, the first factor has uniform Schwartz
decay in $x$, while every derivative of the second factor is bounded.
Dominated convergence in each word norm proves continuity in $\theta$ for
the map and its displayed derivative.  For the rotated-slot map, the same
conclusion follows after the orthogonal change of variables used in
Lemma~\ref{lem:rotatedslots}; the relevant Schwartz seminorms are uniform in
$\theta$.

Now take $F,G\in\mathfrak Q^4$ and choose Schwartz approximations
$F_n\to F$, $G_n\to G$ in $\mathfrak Q^4$.  Bilinearity and
Lemmas~\ref{lem:scaledslots}--\ref{lem:rotatedslots} give, uniformly over
$\theta\in I$, bounds of the form
\[
 C\bigl(\norm{F_n-F}_{\mathfrak Q^4}\norm{G_n}_{\mathfrak Q^4}
 +\norm F_{\mathfrak Q^4}\norm{G_n-G}_{\mathfrak Q^4}\bigr)
\]
for both the maps and their angular derivatives.  For these approximations,
write
\begin{align*}
 P_{\theta,n}(x)
 &:=F_n(\cos\theta\,x)G_n(\sin\theta\,x),\\
 K_{\theta,n}(u,v)
 &:=F_n(\cos\theta\,u-\sin\theta\,v)
   G_n(\sin\theta\,u+\cos\theta\,v).
\end{align*}
The maps and their derivatives converge uniformly in the target Banach
spaces.  Passing to the limit in the identities
\[
 P_{\theta_2,n}-P_{\theta_1,n}
 =\int_{\theta_1}^{\theta_2}\partial_tP_{t,n}\dd t
\]
and in the analogous identity for $K_{\theta,n}$ gives the same identities
for the limits; continuity of the limiting derivatives
then proves strong $C^1$ regularity.  The bilinear estimates also give joint
continuity in $(F,G)$, hence in $(F,G,\theta)$.  Sign changes and slot
interchange are isometries, and the tensor-product conclusion follows from
\eqref{eq:tensorq}.
\end{proof}

\begin{lemma}[Joint angular regularity of the defect]
\label{lem:defectregularity}
Fix $c_*>0$ and let $I$ be a connected component of
$\{\theta:\abs{\cos\theta}\ge c_*\}$.  For
$f_1,f_2\in\mathsf S_4(\R)$, define $D_\theta$ by \eqref{eq:JM}.  Then
\[
 (f_1,f_2,\theta)\longmapsto D_\theta,
 \qquad
 (f_1,f_2,\theta)\longmapsto\partial_\theta D_\theta
\]
are jointly continuous from $\mathsf S_4(\R)^2\times I$ into
$\mathsf S_3(\R^2)$.  In particular,
$\theta\mapsto D_\theta$ is strongly $C^1$.
\end{lemma}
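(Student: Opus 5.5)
The plan is to reduce everything to Lemma~\ref{lem:angularregularity} through Gaussian conjugation. Multiplication by $\gamma_1$ (resp.\ $\gamma_2$) is an isomorphism $\mathsf S_4(\R)\to\mathfrak Q^4(\R)$ and $\mathsf S_3(\R^2)\to\mathfrak Q^3(\R^2)$ (Lemma~\ref{lem:conjugation}). So it suffices to prove joint continuity of $\gamma_2D_\theta$ and $\gamma_2\partial_\theta D_\theta$ into $\mathfrak Q^3(\R^2)$ as functions of $(F_1,F_2,\theta)\in\mathfrak Q^4(\R)^2\times I$, where $F_j=\gamma_1f_j$.

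\textbf{First term.} Since $c^2+s^2=1$, the factorization $\gamma_2(u,v)=\gamma_1(cu-sv)\gamma_1(su+cv)$ is exact. Hence
\[
 \gamma_2(u,v)f_1(cu-sv)f_2(su+cv)=F_1(cu-sv)F_2(su+cv),
\]
which is exactly the rotated-slot map of Lemma~\ref{lem:angularregularity} with $(F,G)=(F_1,F_2)$.

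\textbf{Second term.} For the product of marginals, split $\gamma_2(u,v)=\gamma_1(u)\gamma_1(v)$ and write $\gamma_1(u)=\gamma_1(cu)\gamma_1(su)$. Then
\[
 \gamma_1(u)f_1(cu)f_2(su)=F_1(cu)F_2(su),
\]
and similarly
\[
 \gamma_1(v)f_1(-sv)f_2(cv)=F_1(-sv)F_2(cv).
\]
The first of these is the scaled-slot map with $|\cos\theta|\ge c_*$ in the first slot. The second is the sign-changed, slot-interchanged version, whose large coefficient $\cos\theta$ still sits on the $F_2$ slot, so Lemma~\ref{lem:scaledslots}'s hypothesis applies after the interchange. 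Their tensor product in $(u,v)$ is then covered by the tensor-product clause of Lemma~\ref{lem:angularregularity} together with \eqref{eq:tensorq}.

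\textbf{Derivative and regularity.} The angular derivative of the second term is, by the Leibniz rule, the sum of two terms. In each, one tensor factor is differentiated and the other is not; each is again a tensor product of jointly continuous $\mathfrak Q^3(\R)$-valued maps and hence jointly continuous. Strong $C^1$ regularity then follows as in Lemma~\ref{lem:angularregularity}. We identify the strong derivative with the pointwise one via the integral identity
\[
 D_{\theta_2}-D_{\theta_1}=\int_{\theta_1}^{\theta_2}\partial_tD_t\dd t,
\]
which is inherited from the component maps.

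\textbf{Main obstacle.} The delicate step is the bookkeeping that each scaled-slot factor has its cosine on a slot, with every sign, interchange and isometry checked. A second point needing care is that the tensor bound \eqref{eq:tensorq}, combined with the $\mathfrak Q^3(\R)$ continuity of each factor, yields joint continuity of the product map. This follows from bilinearity and the estimate
\[
 \|U\otimes V-U'\otimes V'\|\le C\bigl(\|U-U'\|\|V\|+\|U'\|\|V-V'\|\bigr).
\]
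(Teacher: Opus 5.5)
Your proposal is correct and follows essentially the same route as the paper. You conjugate by $\gamma_2$ and factor the Gaussian weight exactly, so that $\gamma_2 J_\theta$ is the rotated-slot map and $\gamma_2 M_\theta$ is a tensor product of two scaled-slot maps (the second sign-changed and slot-interchanged, with $\cos\theta$ on the $F_2$ slot), and then you invoke Lemma~\ref{lem:angularregularity}, \eqref{eq:tensorq}, and Lemma~\ref{lem:conjugation}; your Leibniz-rule and bilinear-estimate bookkeeping just spells out what the paper packages into the tensor-product clause of Lemma~\ref{lem:angularregularity}.
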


\begin{proof}
Abbreviate $c=\cos\theta$, $s=\sin\theta$, and write
\begin{align*}
 J_\theta(u,v)&:=f_1(cu-sv)f_2(su+cv),\\
 M_\theta(u,v)&:=f_1(cu)f_2(su)f_1(-sv)f_2(cv),
\end{align*}
so that $D_\theta=J_\theta-M_\theta$.  Set $F_j:=gf_j$.
Gaussian conjugation and
$g(u)g(v)=g(cu-sv)g(su+cv)$ give
\[
 \gamma_2J_\theta
 =F_1(cu-sv)F_2(su+cv).
\]
Likewise, $g(u)=g(cu)g(su)$ and $g(v)=g(-sv)g(cv)$ give
\[
 \gamma_2M_\theta
 =\{F_1(cu)F_2(su)\}\{F_1(-sv)F_2(cv)\}.
\]
Lemma~\ref{lem:conjugation} maps $f_j$ continuously from
$\mathsf S_4$ to $F_j\in\mathfrak Q^4$.  The first display has the asserted
regularity by Lemma~\ref{lem:angularregularity}.  For the second display,
apply the scaled-slot part of that lemma to each brace and then use the
continuous tensor-product map \eqref{eq:tensorq}.  A final application of
Lemma~\ref{lem:conjugation} transfers both conclusions to
$\mathsf S_3(\R^2)$.
\end{proof}

\begin{proposition}[Angle-factored nonlinear remainder]
\label{prop:nonlinear}
Fix $c_*>0$ and $M<\infty$.  Suppose
\[
 h_1,h_2\in\mathsf S_{12}(\R),\qquad h_1(0)=h_2(0)=0,
 \qquad \max_j\norm{h_j}_{\mathsf S_{12}}\le M.
\tag{7.31}\label{eq:nonlinearhyp}
\]
Set $f_j:=g+h_j$, $c=\cos\theta$, $s=\sin\theta$, and define
\begin{align*}
 J_\theta(u,v)&:=f_1(cu-sv)f_2(su+cv),\\
 M_\theta(u,v)&:=f_1(cu)f_2(su)f_1(-sv)f_2(cv),\\
 D_\theta&:=J_\theta-M_\theta.
\end{align*}
Let $D_{\mathrm{lin},\theta}$ be the part linear in $(h_1,h_2)$, set
$R_\theta:=D_\theta-D_{\mathrm{lin},\theta}$, and put
\[
 \eta^2:=\norm{h_1}_{L^2(w)}^2+\norm{h_2}_{L^2(w)}^2.
\]
If $\abs c\ge c_*$, then
\[
 \norm{\partial_\theta R_\theta}_{\mathsf S_3}
 \le C(c_*,M)
 \left(\norm{h_1}_{\mathsf S_4}+\norm{h_2}_{\mathsf S_4}\right)^2.
\tag{7.32}\label{eq:Rderivative}
\]
Moreover,
\[
 \norm{R_\theta}_{\mathsf S_3}
 \le C(c_*,M)\abs{sc}\eta^{4/3}.
\tag{7.33}\label{eq:Rbound}
\]
\end{proposition}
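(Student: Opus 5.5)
The plan is to prove \eqref{eq:Rderivative} first, by Gaussian conjugation and the slot lemmas, and then obtain \eqref{eq:Rbound} by integrating in $\theta$ from an unmixed angle and interpolating.

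\textbf{Step 1 (multilinear expansion).} Set $F_j:=gf_j=g^2+H_j$ with $H_j:=gh_j$. Since $\gamma_1=g$, Lemma~\ref{lem:conjugation} gives
\[
\norm{H_j}_{\mathfrak Q^r}\asymp\norm{h_j}_{\mathsf S_r},
\qquad
\norm{g^2}_{\mathfrak Q^r}<\infty .
\]
As in the proof of Lemma~\ref{lem:defectregularity}, we have $\gamma_2J_\theta=F_1(cu-sv)F_2(su+cv)$ and $\gamma_2M_\theta=\{F_1(cu)F_2(su)\}\{F_1(-sv)F_2(cv)\}$.

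Expand every $F_j$ as $g^2+H_j$.
\begin{itemize}
\item The terms with no $H$ give $\gamma_2g(u)g(v)$ in both $J$ and $M$, so they cancel.
\item The terms with exactly one $H$ are precisely $\gamma_2D_{\mathrm{lin},\theta}$.
\end{itemize}
Hence $\gamma_2R_\theta$ is a finite sum, with numerical coefficients, of terms of two kinds. The first kind is a rotated-slot product $H_1(cu-sv)H_2(su+cv)$. The second kind is a tensor product of two scaled-slot products $\{A(cu)B(su)\}\{C(-sv)E(cv)\}$, where each slot function is $g^2$ or some $H_j$, and at least two of the four slots are $H$'s.

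\textbf{Step 2 (derivative bound).} Apply $\partial_\theta$ by the product rule. Each term is then controlled in $\mathfrak Q^3$ by Lemma~\ref{lem:rotatedslots}, or by Lemma~\ref{lem:scaledslots} combined with \eqref{eq:tensorq}. This uses $\abs c\ge c_*$ for the scaled slots: the first slot in each brace carries $c$, and the slot-interchange clause covers the remaining arrangements.

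The resulting bound is the product of the $\mathfrak Q^4$ norms of the slots. Every term contains at least two $H$ factors. For each, we keep $\norm{H_j}_{\mathfrak Q^4}\lesssim\norm{h_j}_{\mathsf S_4}$. The remaining $H$ factors are bounded by $\norm{h_j}_{\mathsf S_4}\le\norm{h_j}_{\mathsf S_{12}}\le M$, and the $g^2$ factors by constants. Transferring back to $\mathsf S_3(\R^2)$ via Lemma~\ref{lem:conjugation} gives \eqref{eq:Rderivative}. The same argument without $\partial_\theta$ bounds $R_\theta$ itself. Lemma~\ref{lem:angularregularity} supplies the strong $C^1$ regularity needed in the next step.

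\textbf{Step 3 (angular factor).} Let $I$ be the component of $\{\abs{\cos}\ge c_*\}$ containing $\theta$; it contains an unmixed angle $\theta_0\in\{0,\pi\}+2\pi\mathbb Z$.

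We check that $R_{\theta_0}=0$. At $s=0$, $c=\pm1$, the normalization $f_j(0)=1$ (from $h_j(0)=0$) gives $M_{\theta_0}=J_{\theta_0}$. Likewise each line of $\mathcal L_{\pm1,0}$ vanishes because $q_j(0)=0$. Hence $D_{\theta_0}=D_{\mathrm{lin},\theta_0}=0$.

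By strong $C^1$ regularity,
\[
\norm{R_\theta}_{\mathsf S_3}\le\abs{\theta-\theta_0}\sup_I\norm{\partial_\theta R}_{\mathsf S_3}.
\]
Since $\abs{\theta-\theta_0}\le\tfrac\pi2\abs s\le\tfrac{\pi}{2c_*}\abs{sc}$, this yields
\[
\norm{R_\theta}_{\mathsf S_3}\le C(c_*,M)\abs{sc}\bigl(\textstyle\sum_j\norm{h_j}_{\mathsf S_4}\bigr)^2 .
\]

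\textbf{Step 4 (interpolation).} Extend the graph-norm equivalence from the proof of Lemma~\ref{lem:linearcoercivity} to general $r$, which is the same ladder-operator argument. This gives $\norm{h}_{\mathsf S_r}^2\asymp_r\sum_m(1+m)^r\abs{\alpha_m}^2$, where $\alpha_m$ are the Hermite coefficients of $h/g$ in $L^2(\mu)$; for $r=0$ this is $\norm h_{L^2(w)}^2$ up to a constant. Writing $(1+m)^4=\bigl((1+m)^0\bigr)^{2/3}\bigl((1+m)^{12}\bigr)^{1/3}$ and applying Hölder gives
\[
\norm{h_j}_{\mathsf S_4}\lesssim\norm{h_j}_{L^2(w)}^{2/3}M^{1/3}.
\]
Squaring, the squared sum in Step 3 is at most $C M^{2/3}\eta^{4/3}$, which proves \eqref{eq:Rbound}.

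\textbf{Main obstacle.} I expect the hardest part to be the bookkeeping in Step 1. One must check that every surviving term of $\gamma_2R_\theta$ has one of the exact slot shapes covered by Lemmas~\ref{lem:scaledslots}--\ref{lem:rotatedslots}, with the $\cos$-scaled slot placed where the lemma needs it. In particular, the terms of $M_\theta$ in which the two $H$'s sit in different braces must be handled through the tensor estimate \eqref{eq:tensorq}. One must also confirm that the derivative bound holds uniformly on the whole segment from $\theta_0$ to $\theta$ inside $I$.
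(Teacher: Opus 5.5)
Your proposal is correct and follows essentially the same route as the paper. Both arguments use the conjugated multilinear expansion into rotated-slot and tensorized scaled-slot terms, each with at least two perturbation factors; the slot lemmas for the derivative bound; vanishing at the unmixed angle with integration along the angle component; and Hermite--H\"older interpolation between $L^2(w)$ and $\mathsf S_{12}$. The only differences are cosmetic: you integrate before interpolating, you bound $\abs{\theta-\theta_0}\le\frac\pi2\abs{s}$ by Jordan's inequality instead of the paper's $\arccos c_*/\sqrt{1-c_*^2}$ factor, and you interpolate via the $L^2(\mu)$ Hermite coefficients of $h/g$ rather than the Hermite-function coefficients of $gh$.
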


\begin{proof}
Put
\[
 x_1=cu,\quad x_2=su,\quad x_3=-sv,\quad x_4=cv,
 \qquad j(1)=j(3)=1,\quad j(2)=j(4)=2.
\tag{7.34}\label{eq:xslots}
\]
Rotational invariance gives
$g(cu-sv)g(su+cv)=g(u)g(v)$ and
$\prod_{k=1}^4g(x_k)=g(u)g(v)$.  Substitute $f_j=g+h_j$ into
$J_\theta$ and $M_\theta$, expand both products, and subtract their constant
and linear terms.  This gives
\begin{align}
 R_\theta(u,v)
={}&h_1(cu-sv)h_2(su+cv)-\sum_{\substack{A\subset\{1,2,3,4\}\\\abs A\ge2}}
 \left\{\prod_{k\in A}h_{j(k)}(x_k)
 \prod_{k\notin A}g(x_k)\right\}.
\tag{7.35}\label{eq:Rexpansion}
\end{align}
Every summand contains at least two $h$ factors.

Let $\widetilde h_j=gh_j$ and $\Gamma=g^2=e^{-t^2}$.  Since
$\gamma_2(u,v)=g(u)g(v)$, conjugating the joint quadratic term gives
\[
 \gamma_2h_1(cu-sv)h_2(su+cv)
 =\widetilde h_1(cu-sv)\widetilde h_2(su+cv),
\tag{7.36}\label{eq:jointconjugated}
\]
whereas a marginal term indexed by $A$ becomes
\[
 \gamma_2
 \prod_{k\in A}h_{j(k)}(x_k)\prod_{k\notin A}g(x_k)
 =\prod_{k\in A}\widetilde h_{j(k)}(x_k)
  \prod_{k\notin A}\Gamma(x_k).
\tag{7.37}\label{eq:margconjugated}
\]
The right side of \eqref{eq:margconjugated} is a tensor product of a two-slot
function of $u$ and a two-slot function of $v$.  Lemma~\ref{lem:conjugation}
gives
\[
 \norm{\widetilde h_j}_{\mathfrak Q^4}
 \le C\norm{h_j}_{\mathsf S_4},
\tag{7.38}\label{eq:htilde}
\]
while $\Gamma$ has bounded Shubin norm of every fixed order.  Apply
Lemma~\ref{lem:rotatedslots} to \eqref{eq:jointconjugated}; apply
Lemma~\ref{lem:scaledslots} to the two coordinate factors in
\eqref{eq:margconjugated}, followed by \eqref{eq:tensorq}.  Define
\[
 U_J:=\gamma_2h_1(cu-sv)h_2(su+cv),
\qquad
 U_A:=\gamma_2
 \prod_{k\in A}h_{j(k)}(x_k)\prod_{k\notin A}g(x_k).
\]
For $U\in\{U_J\}\cup\{U_A:\abs A\ge2\}$, let $k(U)$ denote the number of
$h_j$ factors in $U$.  The preceding estimates give
\[
 \norm{\partial_\theta U}_{\mathfrak Q^3}
 \le C\left(\norm{h_1}_{\mathsf S_4}
              +\norm{h_2}_{\mathsf S_4}\right)^{k(U)}.
\tag{7.39}\label{eq:termdegree}
\]
Put $H_4=\norm{h_1}_{\mathsf S_4}+\norm{h_2}_{\mathsf S_4}$.  Since
$H_4\le2M$ and $k(U)\in\{2,3,4\}$,
\[
 H_4^{k(U)}\le\max\{1,(2M)^2\}H_4^2.
\]
Summing the joint term and the eleven marginal terms and conjugating back
proves \eqref{eq:Rderivative}.

Let $(\varphi_m)_{m\ge0}$ be the normalized Hermite-function basis of
$L^2(\R)$ and write
\[
 \widehat F_m:=\langle F,\varphi_m\rangle_{L^2}.
\]
The associated creation and annihilation identities, by the same calculation
as in Lemma~\ref{lem:linearcoercivity}, give
\[
 \norm F_{\mathfrak Q^r}^2\asymp
 \sum_{m\ge0}(1+m)^r\abs{\widehat F_m}^2.
\tag{7.40}\label{eq:ShubinHermite}
\]
If $a_m:=\langle gh_j,\varphi_m\rangle_{L^2}$, H\"older's
inequality with exponents $3/2$ and $3$ gives
\[
 \sum_m(1+m)^4\abs{a_m}^2
 \le
 \left(\sum_m\abs{a_m}^2\right)^{2/3}
 \left(\sum_m(1+m)^{12}\abs{a_m}^2\right)^{1/3}.
\]
Equation~\eqref{eq:ShubinHermite}, Gaussian conjugation, and taking square
roots therefore give
\[
 \norm{h_j}_{\mathsf S_4}
 \le C\norm{h_j}_{L^2(w)}^{2/3}
       \norm{h_j}_{\mathsf S_{12}}^{1/3}.
 \tag{7.41}\label{eq:interpolation}
\]
Writing $\eta_j=\norm{h_j}_{L^2(w)}$ and using
\eqref{eq:nonlinearhyp},
\[
 H_4
 \le CM^{1/3}(\eta_1^{2/3}+\eta_2^{2/3})
 \le C(M)(\eta_1^2+\eta_2^2)^{1/3}
 =C(M)\eta^{2/3},
 \qquad
 \norm{\partial_\theta R_\theta}_{\mathsf S_3}
 \le C(c_*,M)\eta^{4/3}.
\tag{7.42}\label{eq:Rderivativeeta}
\]

Let $\theta_0\in\pi\mathbb Z$ be the multiple of $\pi$ in the same connected
component of $\{t:\abs{\cos t}\ge c_*\}$ as $\theta$.  At $\theta_0$, the
two coordinates are the original independent coordinates up to sign, so
$D_{\theta_0}=D_{\mathrm{lin},\theta_0}=R_{\theta_0}=0$; here
$h_j(0)=0$ is used for the linear term.  The segment from $\theta_0$ to
$\theta$ stays in that component.  If $c_*=1$, then $\theta=\theta_0$.
If $0<c_*<1$, writing $x:=\abs{\theta-\theta_0}\le\arccos c_*<\pi/2$ gives
\[
 \abs{\sin\theta}=\sin x,
 \qquad
 \frac{x}{\sin x}\le
 \frac{\arccos c_*}{\sqrt{1-c_*^2}}.
\]
Thus, in either case,
\[
 \abs{\theta-\theta_0}\le C(c_*)\abs{\sin\theta}.
\tag{7.43}\label{eq:anglelength}
\]
By \eqref{eq:Rexpansion}, Lemma~\ref{lem:angularregularity}, and Gaussian
conjugation, $t\mapsto R_t$ is strongly $C^1$ from this angle component into
$\mathsf S_3$.  Hence
\[
 R_\theta=\int_{\theta_0}^{\theta}\partial_tR_t\dd t
 \quad\text{in }\mathsf S_3.
\]
Combining this identity with \eqref{eq:Rderivativeeta} and
\eqref{eq:anglelength} gives
\[
 \norm{R_\theta}_{\mathsf S_3}
 \le C\abs s\eta^{4/3}
 \le C(c_*)\abs{sc}\eta^{4/3},
\]
which is \eqref{eq:Rbound}.
\end{proof}

\subsection{Local conclusion and compact complement}

The linear coercivity and remainder estimates are useful only when
the sources are sufficiently close to Gaussian.  We now combine them to
define that local region, then treat its complement by compactness.  The
latter step uses Darmois--Skitovich at mixed angles and the Maxwell
equation at unmixed angles to exclude every possible zero.

\begin{proof}[Proof of Theorem~\ref{thm:maxwell}]
If $c_*>1$, the assertion is vacuous.  If $c_*=1$, then $\abs c=1$ forces
$s=0$, so both sides of \eqref{eq:maxwell} vanish.  Hence assume
$0<c_*<1$.  Let $f_j$ be
the characteristic function of $Z_j$, set $h_j=f_j-g$, let
$G\sim\cN(0,1)$, and put
\[
 \eta^2:=\NonG_w(Z_1)^2+\NonG_w(Z_2)^2.
\]
For integers $p,\ell\ge0$ with $p+\ell\le12$,
\[
 \abs{\partial^\ell h_j(t)}
 \le\E\abs{Z_j}^{\ell}+\E\abs G^{\ell}\le C_{K,\ell}.
\]
Multiplication by $\abs t^p$, squaring, and integration against $W_1$ give
\[
 \sup_j\norm{h_j}_{\mathsf S_{12}}\le M(K).
\tag{7.44}\label{eq:sourceS12}
\]
Lemma~\ref{lem:linearcoercivity} and Proposition~\ref{prop:nonlinear} imply
\[
 \norm{D_\theta}_{\mathsf S_3}
 \ge\abs{sc}\eta\{a_0-C\eta^{1/3}\}.
\tag{7.45}\label{eq:localD}
\]
Define
\[
 \eta_{\rm loc}
 :=\min\left\{1,
 \left(\frac{a_0}{2C(K,c_*)}\right)^3\right\},
\]
where $C(K,c_*)$ is the constant in \eqref{eq:localD}.  Then
\[
 \norm{D_\theta}_{\mathsf S_3}
 \ge\frac{a_0}{2}\abs{sc}\eta
 \qquad\text{whenever }\eta\le\eta_{\rm loc}.
\tag{7.46}\label{eq:localbound}
\]

We now prove a uniform bound on the complement.  Let
$\mathfrak F_K$ be the set of characteristic functions of centered,
variance-one variables satisfying $\norm Z_{\psi_2}\le K$.  We claim that this set is
compact in $\mathsf S_{r_0}(\R)$ for every fixed integer $r_0\ge0$.

To prove this, take an arbitrary sequence $(f_n)\subset\mathfrak F_K$ and
choose $Z_n$ with characteristic function $f_n$.  Fix $a>K$.  Since
\[
 C_{a,q}:=\sup_{x\ge0}x^q e^{-x^2/(2a^2)}<\infty,
\]
for $\abs z>R$ one has
\[
 \abs z^q
 \le C_{a,q}e^{-R^2/(2a^2)}e^{z^2/a^2}.
\]
Consequently, for every $q\ge0$,
\[
 \sup_n\E\!\left[
 \abs{Z_n}^q\one_{\{\abs{Z_n}>R\}}\right]
 \le2C_{a,q}e^{-R^2/(2a^2)}\longrightarrow0.
\]
Thus the laws of $Z_n$ are tight and the families $(\abs{Z_n}^q)_n$ are
uniformly integrable.  After passing to a subsequence, write
$Z_n\Rightarrow Z$ and let $f$ be the characteristic function of $Z$.
For $0\le q\le r_0$,
\[
 f_n^{(q)}(t)=\E[(iZ_n)^qe^{itZ_n}]
 \longrightarrow \E[(iZ)^qe^{itZ}]=f^{(q)}(t)
\]
pointwise.  Moreover,
$\abs{f_n^{(q)}(t)-f^{(q)}(t)}\le2C_{K,q}$.  Hence, for every
$p+q\le r_0$, dominated convergence gives
\[
 \int_\R\abs t^{2p}
 \abs{f_n^{(q)}(t)-f^{(q)}(t)}^2W_1(t)\dd t\longrightarrow0.
\]
Thus $f_n\to f$ strongly in $\mathsf S_{r_0}$.  Uniform integrability passes
the mean and variance to the limit.  Finally, for every $a>K$, the definition
of $\norm{Z_n}_{\psi_2}\le K$ gives
$\E e^{Z_n^2/a^2}\le2$.  Portmanteau gives
$\E e^{Z^2/a^2}\le2$, and monotone convergence as $a\downarrow K$ yields
$\E e^{Z^2/K^2}\le2$.  Hence the limit remains in $\mathfrak F_K$.

To include the unmixed angles in the compact parameter set, we extend the
normalized defect $D_\theta/(sc)$ continuously to $s=0$.
On each connected component of $\{\theta:\abs{\cos\theta}\ge c_*\}$, let
$\theta_0\in\pi\mathbb Z$ be its zero of $\sin\theta$.  For $sc\ne0$, set
$\overline E(f_1,f_2,\theta):=D_\theta/(sc)$; at $\theta=\theta_0$, set it
equal to $\partial_\theta D_{\theta_0}$.  These definitions agree
continuously.  Indeed, $D_{\theta_0}=0$ and the Banach-space fundamental
theorem of calculus gives
\[
 \frac{D_\theta}{sc}
 =\frac{\theta-\theta_0}{sc}
 \int_0^1
 \partial_\theta D_{\theta_0+r(\theta-\theta_0)}\dd r.
\tag{7.47}\label{eq:Eextension}
\]
Lemma~\ref{lem:defectregularity} makes the integrand in
\eqref{eq:Eextension} jointly continuous in the sources, the angle, and
$r$, with values in $\mathsf S_3(\R^2)$.  Since
\[
 \frac{\theta-\theta_0}{\sin\theta\cos\theta}\longrightarrow1
 \qquad(\theta\to\theta_0),
\]
the right side of \eqref{eq:Eextension} converges in $\mathsf S_3$ to
$\partial_\theta D_{\theta_0}$, jointly in the sources.  Thus
$\overline E$ is jointly continuous, including at the endpoint.

At $s=0,c=1$, the endpoint value obtained by differentiating
\eqref{eq:JM} is
\[
 \mathcal G(f_1,f_2)(u,v)
 :=u f_1(u)f_2'(v)-v f_1'(u)f_2(v).
\tag{7.48}\label{eq:Maxwellequation}
\]
At $s=0,c=-1$ it is $\mathcal G(f_1,f_2)(-u,-v)$; hence all endpoints differ
only by an isometric coordinate reflection.

This continuous extension has no zero with $\eta>0$.  At an interior angle, a zero
would give $D_\theta=0$, so the two nontrivial rotated forms are independent;
Lemma~\ref{lem:DS} forces both sources to be Gaussian.  At an endpoint,
suppose \eqref{eq:Maxwellequation} vanishes.  Since $f_j(0)=1$, both
characteristic functions are nonzero on some interval
$(-\delta,\delta)$.  Define
\[
 A_j(t):=\frac{f_j'(t)}{tf_j(t)},
 \qquad 0<\abs t<\delta.
\]
For nonzero $u,v$ in this interval, \eqref{eq:Maxwellequation} gives
\[
 A_1(u)=A_2(v).
\tag{7.49}\label{eq:Maxwellratio}
\]
Fixing first $v$ and then $u$ shows that both $A_j$ equal one constant.
Since $f_j(0)=1$, $f_j'(0)=0$, and $f_j''(0)=-1$,
\[
 \lim_{t\to0}A_j(t)=-1.
\]
Thus $f_j'(t)=-tf_j(t)$ on $(-\delta,\delta)$, and the initial condition
$f_j(0)=1$ gives $f_j(t)=e^{-t^2/2}$ there.  For every compact set of
$z\in\mathbb C$, sub-Gaussianity supplies an integrable bound for
$\abs{Z_j}^q e^{\abs{\operatorname{Im}z}\abs{Z_j}}$ for every $q\ge0$.
Therefore $z\mapsto\E e^{izZ_j}$ is entire, and the identity theorem gives
$f_j=g$ on $\R$.

The set
\[
\left\{(f_1,f_2,\theta)\in\mathfrak F_K^2\times[0,2\pi]:
 \abs{\cos\theta}\ge c_*,\ \eta\ge\eta_{\rm loc}\right\}
\]
is compact because $\eta$ is continuous in $\mathsf S_0$.  If it is empty,
there is nothing to prove on the complementary region.  Otherwise, the preceding Darmois--Skitovich and Maxwell arguments show that
$\overline E(f_1,f_2,\theta)\ne0$ throughout this compact set.  Since
$\eta\ge\eta_{\rm loc}>0$ there, the continuous function
$\frac{\norm{\overline E(f_1,f_2,\theta)}_{\mathsf S_3}}{\eta}$
is strictly positive and therefore attains a positive minimum. Consequently,
\[
 \norm{D_\theta}_{\mathsf S_3}\ge a_1\abs{sc}\eta
 \qquad(\eta\ge\eta_{\rm loc}).
\tag{7.50}\label{eq:globalbound}
\]
Setting $a_*:=\min\{a_0/2,a_1\}$ and combining
\eqref{eq:localbound}, \eqref{eq:globalbound}, and
$\eta^2=\NonG_w(Z_1)^2+\NonG_w(Z_2)^2$ gives \eqref{eq:maxwell}.
\end{proof}

\begin{remark}[Why third order appears]
At zero Sobolev order, the evaluation constraint $h''(0)=0$ is not
continuous, so unit-variance normalization cannot uniformly control the
degree-two Hermite coefficient.  In $\mathsf S_3$, the three evaluations
$h(0)$, $h'(0)$, and $h''(0)$ are continuous.  Equation
\eqref{eq:lowcontrol} then bounds the coefficients of degrees zero, one, and
two by those of degrees at least three, on which rotation is uniformly
coercive.
\end{remark}

\section{Population score gap in the wrong direction}
\label{sec:popgap}

Theorem~\ref{thm:maxwell} is stated for an abstract rotation of independent
standardized sources.  We now verify that the standardized residuals from
the wrong OLS direction have exactly this form, with mixing strength bounded
below by a constant multiple of $\beta$.  This converts Theorem~\ref{thm:maxwell} into the population score gap used by the test.

For any centered law $P$ with positive-definite covariance, define its two
standardized OLS pairs by
\begin{align}
 (S_{\dirf},T_{\dirf})
 &=\left(
 \frac{X}{\sqrt{\Var X}},
 \frac{Y-\alpha_{\dirf}X}
 {\sqrt{\Var(Y-\alpha_{\dirf}X)}}
 \right),
 &\alpha_{\dirf}&=\frac{\Cov(X,Y)}{\Var X},\notag\\
 (S_{\dirr},T_{\dirr})
 &=\left(
 \frac{Y}{\sqrt{\Var Y}},
 \frac{X-\alpha_{\dirr}Y}
 {\sqrt{\Var(X-\alpha_{\dirr}Y)}}
 \right),
 &\alpha_{\dirr}&=\frac{\Cov(X,Y)}{\Var Y}.
\tag{8.1}\label{eq:OLSpairs}
\end{align}
Let $J_d(P)$ be the $\mathsf S_3$ norm of the defect \eqref{eq:defect} for
the pair indexed by $d\in\{\dirf,\dirr\}$.

\begin{proposition}[Correct score zero, wrong score separated]
\label{prop:population-gap}
There is $\kappa>0$, depending only on the fixed constants, such that, for
$0<\beta\le\beta_0$,
\[
 \begin{array}{lll}
 P\in\cP_{\dirf}:&J_{\dirf}(P)=0,&
 J_{\dirr}(P)\ge\kappa\beta\nu,\\[1mm]
 P\in\cP_{\dirr}:&J_{\dirr}(P)=0,&
 J_{\dirf}(P)\ge\kappa\beta\nu.
 \end{array}
\tag{8.2}\label{eq:scoregap}
\]
\end{proposition}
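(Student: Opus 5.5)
For $P\in\cP_{\dirf}$ I would compute both standardized OLS pairs in \eqref{eq:OLSpairs} explicitly in terms of the sources $(Z_1,Z_2)$. The reverse class is obtained by swapping the labels $X\leftrightarrow Y$, $(Z_1,Z_2,\sigma_1,\sigma_2,a)\leftrightarrow(U_2,U_1,\tau_2,\tau_1,b)$. This swap exchanges the roles of $J_{\dirf}$ and $J_{\dirr}$, so the second line of \eqref{eq:scoregap} follows from the first, and I treat only $P\in\cP_{\dirf}$.

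\emph{Correct direction.} Here $\alpha_{\dirf}=a$, since $\Cov(X,Y)=a\sigma_1^2$ and $\Var X=\sigma_1^2$. Hence
\[
 (S_{\dirf},T_{\dirf})=(\operatorname{sgn}(\sigma_1)Z_1,\operatorname{sgn}(\sigma_2)Z_2)=(Z_1,Z_2),
\]
which is an independent pair. Therefore $\Phi$ factorizes, $D\equiv0$, and $J_{\dirf}(P)=0$.

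\emph{Wrong direction.} Put $V:=a^2\sigma_1^2+\sigma_2^2$. As in \eqref{eq:reverseols}, $\alpha_{\dirr}=b=a\sigma_1^2/V$ and $1-ab=\sigma_2^2/V$. The defining formula gives
\[
 S_{\dirr}=\bar c\,Z_1+\bar s\,Z_2,\qquad
 \bar c:=\frac{a\sigma_1}{\sqrt V},\quad \bar s:=\frac{\sigma_2}{\sqrt V}.
\]
The wrong residual is
\[
 X-bY=\frac{\sigma_1\sigma_2}{V}\bigl(\sigma_2 Z_1-a\sigma_1 Z_2\bigr).
\]
Since the source variances are one and $\bar c^2+\bar s^2=1$, normalizing gives $T_{\dirr}=\bar sZ_1-\bar cZ_2$. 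I then relabel the sources as $Z_1':=Z_2$ and $Z_2':=Z_1$, set $c:=\bar s$ and $s:=\bar c$, and reflect the second coordinate, $T\mapsto -T$. This yields exactly the rotation \eqref{eq:rotation}:
\[
 S=cZ_1'+sZ_2',\qquad -T=-sZ_1'+cZ_2'.
\]

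Next I check that the reflection does not change the score. The defect of $(S,-T)$ equals $D(u,-v)$. The weight $W_2$ is even, and the family of monomials $t^\alpha\partial^\gamma$ in \eqref{eq:Sr} is invariant up to signs under $v\mapsto-v$. So the $\mathsf S_3(\R^2)$ norm is invariant, and $J_{\dirr}(P)$ equals the $\mathsf S_3$ norm of the defect of the rotated pair.

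The constants are uniform over the class. Because $V\le(\overline a^2+1)\overline\sigma^2$, the cosine satisfies
\[
 \abs c=\frac{\sigma_2}{\sqrt V}\ge c_*:=\frac{\underline\sigma}{\overline\sigma\sqrt{\overline a^2+1}},
\]
which depends only on the fixed constants \eqref{eq:fixedconstants}. Likewise
\[
 \abs s=\frac{\abs a\sigma_1}{\sqrt V}\ge\frac{\beta\underline\sigma}{\overline\sigma\sqrt{\overline a^2+1}}=c_*\beta,
\]
so $\abs{sc}\ge c_*^2\beta$.

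Now I apply Theorem~\ref{thm:maxwell} with $K$ and this $c_*$. Its hypotheses hold because the sources are independent, standardized, and satisfy $\norm{Z_j}_{\psi_2}\le K$. Using $\NonG_w(Z_j)\ge\nu$, I get
\[
 J_{\dirr}(P)\ge a_*(K,c_*)\,c_*^2\,\beta\,\sqrt2\,\nu,
\]
so $\kappa:=\sqrt2\,a_*c_*^2$ works. This argument needs no smallness of $\beta$; the restriction $\beta\le\beta_0$ only ensures that the class parameters are admissible.

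The main obstacle is the algebraic step: verifying that the wrong-direction residual normalizes to exactly $\pm(\bar sZ_1-\bar cZ_2)$, with the same $(\bar c,\bar s)$ as in $S_{\dirr}$. This relies on the identity $1-ab=\sigma_2^2/V$. The second delicate point is the orientation of the rotation. Theorem~\ref{thm:maxwell} needs the cosine bounded away from zero, but in the natural labeling the cosine is the small quantity of order $\beta$. The fix is to swap the source labels so that the large coefficient $\sigma_2/\sqrt V$ becomes the cosine, and to absorb the resulting sign through the reflection invariance of the $\mathsf S_3$ norm.
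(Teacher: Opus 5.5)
Your proof is correct and follows the paper's argument: the paper writes the wrong-direction pair as $(1+\lambda^2)^{-1/2}(\lambda Z_1+Z_2,\,Z_1-\lambda Z_2)$ with $\lambda=a\sigma_1/\sigma_2$, reorders the sources as $(Z_2,Z_1)$ to get a rotation whose cosine $(1+\lambda^2)^{-1/2}$ is bounded below and whose $\abs{sc}$ is of order $\beta$, and then applies Theorem~\ref{thm:maxwell}, exactly as you do. One small slip: after your relabeling, $T_{\dirr}=\bar sZ_2'-\bar cZ_1'=-sZ_1'+cZ_2'$ already, so no reflection is needed and your displayed identity for $-T$ has the wrong sign; your reflection-invariance remark makes this harmless.
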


\begin{proof}
Consider a forward law.  Its forward pair is exactly $(Z_1,Z_2)$, so its
defect vanishes.  Let
\[
 \lambda:=\frac{a\sigma_1}{\sigma_2}.
\]
Since
\[
 \Var(Y)=a^2\sigma_1^2+\sigma_2^2,
 \qquad
 \alpha_{\dirr}
 =\frac{a\sigma_1^2}{a^2\sigma_1^2+\sigma_2^2},
\]
one has
\[
 \Var(X-\alpha_{\dirr}Y)
 =\frac{\sigma_1^2\sigma_2^2}
        {a^2\sigma_1^2+\sigma_2^2}.
\]
Substitution into \eqref{eq:OLSpairs} therefore gives
\[
 (S_{\dirr},T_{\dirr})
 =\frac1{\sqrt{1+\lambda^2}}
 \bigl(\lambda Z_1+Z_2,\ Z_1-\lambda Z_2\bigr).
\tag{8.3}\label{eq:wrongrotation}
\]
After ordering the latent source vector as $(Z_2,Z_1)$, the two rows in
\eqref{eq:wrongrotation} are exactly $(c,s)$ and $(-s,c)$, where
\[
 c=(1+\lambda^2)^{-1/2},\qquad
 s=\lambda(1+\lambda^2)^{-1/2}.
\]
The fixed coefficient and scale bounds give
\[
 \abs\lambda\le\Lambda:=\frac{\overline a\,\overline\sigma}
 {\underline\sigma},\qquad
 \abs\lambda\ge\frac{\underline\sigma}{\overline\sigma}\beta.
\]
Consequently
\[
 \abs c\ge(1+\Lambda^2)^{-1/2}=:c_*>0,
 \qquad
 \abs{sc}=\frac{\abs\lambda}{1+\lambda^2}\ge c_1\beta.
\tag{8.4}\label{eq:scbeta}
\]
Theorem~\ref{thm:maxwell}, \eqref{eq:scbeta}, and
$\NonG_w(Z_j)\ge\nu$ give
\[
 J_{\dirr}(P)
 \ge a_*\abs{sc}
 \{\NonG_w(Z_1)^2+\NonG_w(Z_2)^2\}^{1/2}
 \ge a_*c_1\sqrt2\,\beta\nu.
\]
Thus the forward case holds with $\kappa=a_*c_1\sqrt2$.  Swapping $X$ and
$Y$ proves the reverse case with the same constant.
\end{proof}

\section{A uniform estimator of the Sobolev score}
\label{sec:estimator}

Proposition~\ref{prop:population-gap} reduces direction selection to
estimating the forward and reverse population scores.  Each score is obtained
by applying the OLS standardization determined by $\Sigma(P)$ and then taking
the norm of the mean of a Hilbert-valued characteristic-function feature.
We estimate $\Sigma(P)$ on one subsample and the two feature means on another,
so that the random standardization is independent of the observations used
for feature estimation.  The following Hilbert-space median-of-means lemma
provides the required conditional concentration.

\begin{proposition}[Measurable median-of-means selector in a Hilbert space]
\label{prop:robustmean}
Let $V_1,\ldots,V_N$ be i.i.d. random elements of a real or complex
separable Hilbert space $\mathbb H$, with mean $m$ and
$\E\norm{V_1-m}_{\mathbb H}^2\le v^2$.  For every integer
$8\le B\le N/2$, there is a Borel-measurable estimator
$\widehat m_B$ such that
\[
 \Pp\left(\norm{\widehat m_B-m}_{\mathbb H}
 >C v\sqrt{B/N}\right)\le2e^{-cB}.
\tag{9.1}\label{eq:HilbertMOM}
\]
The construction uses only the data and $B$, not $v$.
\end{proposition}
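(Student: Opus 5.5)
We use the standard Hilbert-space median-of-means construction, with a geometric-median-type selector that is Borel measurable and does not require knowledge of $v$. First, split the sample into $B$ disjoint blocks $I_1,\dots,I_B$ of equal size $\lfloor N/B\rfloor\ge2$; any leftover observations are discarded. Then form the block means $\bar V_b:=|I_b|^{-1}\sum_{i\in I_b}V_i$. By independence and Chebyshev's inequality in $\mathbb H$ (where $\E\|\bar V_b-m\|^2\le v^2/|I_b|\le 2v^2B/N$),
\[
\Pp\bigl(\|\bar V_b-m\|>r\bigr)\le \frac{2v^2B}{Nr^2}\le \frac1{10}
\qquad\text{for } r:=\sqrt{20}\,v\sqrt{B/N}.
\]
Call block $b$ good if $\|\bar V_b-m\|\le r$. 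The good indicators are i.i.d. Bernoulli with success probability at least $9/10$. Hoeffding's inequality then shows that more than $3B/4$ blocks are good, except on an event of probability at most $e^{-2B(3/20)^2}\le 2e^{-cB}$.

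For the selector we avoid the geometric median, whose measurability and uniqueness need an argument, and use the following discrete rule. For each $b$, let $\rho_b$ be the smallest radius $\rho$ such that the closed ball of radius $\rho$ around $\bar V_b$ contains at least $\lceil 3B/4\rceil$ of the points $\bar V_1,\dots,\bar V_B$; equivalently, $\rho_b$ is the $\lceil 3B/4\rceil$-th order statistic of $(\|\bar V_b-\bar V_{b'}\|)_{b'}$. Set $\widehat m_B:=\bar V_{b^\star}$, where $b^\star$ is the smallest index minimizing $\rho_b$. Every ingredient is a finite composition of continuous maps (norms of differences), order statistics, and an argmin with lexicographic tie-breaking, so $\widehat m_B$ is Borel measurable on $\mathbb H^N$. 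Separability of $\mathbb H$ ensures that the Borel $\sigma$-field of the product is the product $\sigma$-field, so measurability of $\widehat m_B$ as a function of the data is legitimate. The rule uses only the data and $B$.

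On the good event, pick any good block $b_0$. All $>3B/4$ good means lie within $2r$ of $\bar V_{b_0}$, so $\rho_{b_0}\le 2r$, and hence $\rho_{b^\star}\le2r$. The ball of radius $2r$ around $\bar V_{b^\star}$ contains at least $3B/4$ of the block means, and more than $3B/4$ blocks are good. These two collections have total size exceeding $B$, so by pigeonhole some block is in both: it is good and within $2r$ of $\widehat m_B$. Therefore $\|\widehat m_B-m\|\le 2r+r=3r=C v\sqrt{B/N}$, which proves \eqref{eq:HilbertMOM}.

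The main obstacle is only bookkeeping. We must check that the constraint $B\le N/2$ gives $|I_b|\ge N/(2B)$, and the factor $2$ is absorbed into $C$ above. We must also handle the complex case, where Chebyshev in $\mathbb H$ is unchanged. The condition $B\ge8$ is not needed for the argument; it only guarantees that $\lceil 3B/4\rceil<B$, which keeps the selection rule nontrivial.
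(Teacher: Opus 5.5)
Your proof is correct and is essentially the paper's argument: block means, Chebyshev plus Hoeffding to make more than $3B/4$ blocks good, a selector given by the smallest-index minimizer of an order statistic of the pairwise block-mean distances, and a pigeonhole step giving the radius $3r$. The only differences are cosmetic constants: you use the $\lceil 3B/4\rceil$-th order statistic and a $1/10$ Chebyshev level, where the paper uses $\lceil 2B/3\rceil$ and $1/8$.
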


\begin{proof}
Partition the first $\ell B$ observations into $B$ blocks of common size
$\ell=\lfloor N/B\rfloor$, and let $M_b$ be the corresponding block means.
For each $b$, let $r_b$ be the $\lceil2B/3\rceil$-th order statistic of
\[
 \bigl\{\norm{M_b-M_j}_{\mathbb H}:1\le j\le B\bigr\}.
\]
Let $\widehat b$ be the smallest-index minimizer of $r_b$ and set
$\widehat m_B:=M_{\widehat b}$.  This estimator is Borel measurable because
it is obtained from finitely many measurable distances, order statistics,
and a deterministic tie-breaking rule.

Independence and the Hilbert-space inner product give
\[
 \E\norm{M_b-m}_{\mathbb H}^2
 =
 \frac1{\ell^2}
 \sum_{i=1}^{\ell}\E\norm{V_i-m}_{\mathbb H}^2
 \le\frac{v^2}{\ell}.
\]
Set
\[
 r:=\frac{\sqrt8\,v}{\sqrt\ell},
 \qquad
 I_b:=\one_{\{\norm{M_b-m}_{\mathbb H}>r\}}.
\]
Chebyshev's inequality gives $\E I_b\le1/8$.  The variables
$I_1,\ldots,I_B$ are independent, so Hoeffding's inequality yields
\[
 \Pp\left(\sum_{b=1}^B I_b>\frac B4\right)
 \le
 \Pp\left(
 \sum_{b=1}^B(I_b-\E I_b)>\frac B8
 \right)
 \le
 \exp\left\{-\frac{2(B/8)^2}{B}\right\}
 =e^{-B/32}.
\]
Thus, except on an event of probability at most $e^{-B/32}$, the set
\[
 \mathcal G
 :=\left\{b:\norm{M_b-m}_{\mathbb H}\le r\right\}
\]
contains at least $\lceil3B/4\rceil$ indices.

On this event, if $b,j\in\mathcal G$, then
\[
 \norm{M_b-M_j}_{\mathbb H}\le2r.
\]
Since $\lceil3B/4\rceil\ge\lceil2B/3\rceil$, every $b\in\mathcal G$
satisfies $r_b\le2r$, and hence
\[
 r_{\widehat b}\le2r.
\]
By the definition of $r_{\widehat b}$, at least
$\lceil2B/3\rceil$ indices $j$ satisfy
\[
 \norm{M_{\widehat b}-M_j}_{\mathbb H}\le r_{\widehat b}\le2r.
\]
This set of indices intersects $\mathcal G$, because
\[
 \left\lceil\frac{2B}{3}\right\rceil
 +\left\lceil\frac{3B}{4}\right\rceil>B.
\]
Choosing an index $j$ in the intersection gives
\[
 \norm{\widehat m_B-m}_{\mathbb H}
 \le
 \norm{M_{\widehat b}-M_j}_{\mathbb H}
 +\norm{M_j-m}_{\mathbb H}
 \le3r.
\]
Finally, $B\le N/2$ implies
\[
 \ell=\left\lfloor\frac NB\right\rfloor\ge\frac{N}{2B},
\]
and therefore
\[
 3r\le12v\sqrt{\frac BN}.
\]
Hence
\[
 \Pp\left(
 \norm{\widehat m_B-m}_{\mathbb H}
 >12v\sqrt{\frac BN}
 \right)
 \le e^{-B/32},
\]
which proves \eqref{eq:HilbertMOM}.
\end{proof}

To apply Proposition~\ref{prop:robustmean}, we express each directional score
as a function of a Hilbert-valued mean.  The relevant feature is evaluated at
the OLS-standardized pair $A_d(\Sigma(P))(X,Y)^\top$, so it also depends on
the unknown covariance matrix.  We begin by controlling this covariance
dependence uniformly over the model.

\paragraph{Uniform covariance conditioning.} Write
\[
 \Sigma(P):=\E_P[(X,Y)^\top(X,Y)]
\]
for the covariance matrix of the centered observation.  All model covariance
matrices lie in a fixed compact spectral interval: there are
$0<\lambda_-<\lambda_+<\infty$, depending only on the fixed constants, such that
\[
 \lambda_-I_2\preceq\Sigma(P)\preceq \lambda_+I_2
\tag{9.2}\label{eq:spectralcompact}
\]
for every law in either class.  Indeed, for a forward law,
\[
 (X,Y)^\top=B(Z_1,Z_2)^\top,
 \qquad
 B=\begin{pmatrix}\sigma_1&0\\a\sigma_1&\sigma_2\end{pmatrix},
 \qquad \Sigma=BB^\top.
\]
The fixed coefficient and scale bounds imply
$\norm{B}_{\rm op}+\norm{B^{-1}}_{\rm op}\le C$, and hence
\[
 \norm{B^{-1}}_{\rm op}^{-2}I_2
 \preceq \Sigma\preceq \norm{B}_{\rm op}^2I_2.
\]
For a reverse law the same argument applies to
$B=\left(\begin{smallmatrix}\tau_1&b\tau_2\\0&\tau_2\end{smallmatrix}\right)$
with source vector $(U_1,U_2)^\top$.  This proves
\eqref{eq:spectralcompact}.

\paragraph{OLS transformations.} For a positive-definite $2\times2$ matrix $\Gamma$, let
$A_d(\Gamma)$ be the matrix implementing the standardized OLS transformation
in direction $d$ from \eqref{eq:OLSpairs}.  Explicitly,
\begin{align}
 A_{\dirf}(\Gamma)
 &=\begin{pmatrix}
 \Gamma_{11}^{-1/2}&0\\
 -\dfrac{\Gamma_{12}/\Gamma_{11}}
 {\sqrt{\Gamma_{22}-\Gamma_{12}^2/\Gamma_{11}}}&
 \dfrac1{\sqrt{\Gamma_{22}-\Gamma_{12}^2/\Gamma_{11}}}
 \end{pmatrix},\notag\\[2mm]
 A_{\dirr}(\Gamma)
 &=\begin{pmatrix}
 0&\Gamma_{22}^{-1/2}\\
 \dfrac1{\sqrt{\Gamma_{11}-\Gamma_{12}^2/\Gamma_{22}}}&
 -\dfrac{\Gamma_{12}/\Gamma_{22}}
 {\sqrt{\Gamma_{11}-\Gamma_{12}^2/\Gamma_{22}}}
 \end{pmatrix}.
\tag{9.3}\label{eq:Atransform}
\end{align}
On
\[
 \mathcal C:=\{\Gamma:(\lambda_-/2)I_2\preceq\Gamma
 \preceq2\lambda_+I_2\},
\tag{9.4}\label{eq:Ccompact}
\]
these maps and their first derivatives are uniformly bounded.

Once an OLS transformation is fixed, all weighted derivatives entering the
$\mathsf S_3$ score can be represented as coordinates of a single
Hilbert-valued expectation.  We now construct that representation.

Write $\N_0:=\{0,1,2,\ldots\}$.  To describe the score estimator, set
(from this point through the end of
Section~\ref{sec:estimator}, $\alpha$ denotes a multiindex, never an OLS or
structural coefficient)
\begin{align*}
 \mathcal I_2&:=\{(\alpha,\gamma)\in\N_0^2\times\N_0^2:
                  \abs\alpha+\abs\gamma\le3\},\\
 \mathcal I_1&:=\{(a,k)\in\N_0\times\N_0:a+k\le3\}.
\end{align*}
Define the complex separable Hilbert spaces
\[
 \mathbb H_2:=\bigoplus_{(\alpha,\gamma)\in\mathcal I_2}L^2(W_2),
 \qquad
 \mathbb H_1:=\bigoplus_{(a,k)\in\mathcal I_1}L^2(W_1),
 \qquad
 \mathbb H:=\mathbb H_2\oplus\mathbb H_1\oplus\mathbb H_1.
\tag{9.5}\label{eq:featureHilbert}
\]

\paragraph{Hilbert-valued characteristic-function features.}
Fix an OLS-transformed pair $(S,T)$.  Each weighted derivative appearing in
the $\mathsf S_3$ score is the expectation of an explicit function of
$(S,T)$.  We collect all these functions into a single Hilbert-valued
feature.
For a pair $(S,T)$, define the finite direct-sum Hilbert feature
\[
 \Psi(S,T):=
 \left(
 \bigl[u^{\alpha_1}v^{\alpha_2}(iS)^{\gamma_1}(iT)^{\gamma_2}
 e^{i(uS+vT)}\bigr]_{(\alpha,\gamma)\in\mathcal I_2},
 \bigl[u^a(iS)^ke^{iuS}\bigr]_{(a,k)\in\mathcal I_1},
 \bigl[v^a(iT)^ke^{ivT}\bigr]_{(a,k)\in\mathcal I_1}
 \right),
\tag{9.6}\label{eq:PsiFeature}
\]
which is an element of $\mathbb H$.  For each coordinate of $\Psi(S,T)$, its
squared Hilbert norm is a finite Gaussian integral of a frequency monomial
multiplied by a polynomial in $(S,T)$ of total degree at most six.  Summing
over the finite index sets therefore gives
\[
 \norm{\Psi(S,T)}_{\mathbb H}^2
 \le C(1+\abs S^6+\abs T^6).
\tag{9.7}\label{eq:featuremoment}
\]

\paragraph{Recovering the Sobolev score.}
The mean of $\Psi(S,T)$ contains the joint and marginal characteristic-function
derivatives as separate coordinates.  The following map subtracts the
products of the marginal coordinates from the corresponding joint
coordinates.
For $z=(z^{(2)},z^{(S)},z^{(T)})\in\mathbb H$, define the locally Lipschitz
map $\mathcal T:\mathbb H\to\mathbb H_2$ componentwise by
\[
 [\mathcal Tz]_{\alpha,\gamma}(u,v)
 :=z^{(2)}_{\alpha,\gamma}(u,v)
 -z^{(S)}_{\alpha_1,\gamma_1}(u)
  z^{(T)}_{\alpha_2,\gamma_2}(v).
\tag{9.8}\label{eq:Tmap}
\]
The product lies in $L^2(W_2)$ by Fubini.  For later reference, on every
radius-$R$ ball in $\mathbb H$, tensor
factorization gives
\[
 \norm{\mathcal Tz-\mathcal Tz'}_{\mathbb H_2}
 \le C(1+R)\norm{z-z'}_{\mathbb H}
 \qquad(\norm z_{\mathbb H},\norm{z'}_{\mathbb H}\le R).
\tag{9.9}\label{eq:Tlipschitz}
\]
If $m_{S,T}:=\E\Psi(S,T)$, then
\[
 [\mathcal Tm_{S,T}]_{\alpha,\gamma}
 =u^{\alpha_1}v^{\alpha_2}
   \partial_u^{\gamma_1}\partial_v^{\gamma_2}D(u,v),
 \qquad
 J(S,T):=\norm{\mathcal Tm_{S,T}}_{\mathbb H_2}.
\tag{9.10}\label{eq:scoreasmap}
\]
Indeed, differentiation under the expectation gives the joint derivative,
and the last two feature groups give the two marginal derivatives.  Thus
\eqref{eq:scoreasmap} is exactly, component by component, the norm
\eqref{eq:Sr} of the defect \eqref{eq:defect}.
\\

This representation applies to a fixed OLS transformation.  In the estimator,
$\Sigma(P)$ is replaced by an estimate, so we must control how the feature
mean and the resulting score vary with the covariance matrix.

\begin{lemma}[Stability with respect to the OLS transform]
\label{lem:transformstability}
Uniformly over the model, over $d\in\{\dirf,\dirr\}$, and over
$\Gamma,\Gamma'\in\mathcal C$,
\[
 \norm{\E\Psi(A_d(\Gamma)(X,Y)^\top)
          -\E\Psi(A_d(\Gamma')(X,Y)^\top)}_{\mathbb H}
 \le C\norm{\Gamma-\Gamma'}_F.
\tag{9.11}\label{eq:transformstability}
\]
If
\[
 m_d(\Gamma):=\E\Psi(A_d(\Gamma)(X,Y)^\top),
 \qquad
 J_{d,\Gamma}(P):=\norm{\mathcal Tm_d(\Gamma)}_{\mathbb H_2},
\]
then
\[
 \abs{J_{d,\Gamma}(P)-J_{d,\Gamma'}(P)}
 \le C\norm{\Gamma-\Gamma'}_F.
\]
\end{lemma}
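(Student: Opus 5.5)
The argument is a pathwise Lipschitz estimate for the feature map, followed by the local Lipschitz bound \eqref{eq:Tlipschitz} for $\mathcal T$.

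\textbf{Step 1: the transform is Lipschitz in the covariance.} Fix $d$, write $W=(X,Y)^\top$, and set $(S,T)=A_d(\Gamma)W$ and $(S',T')=A_d(\Gamma')W$. Since $\mathcal C$ is convex and $A_d$ has uniformly bounded first derivatives on $\mathcal C$ (stated after \eqref{eq:Ccompact}), the mean value theorem gives
\[
 \norm{A_d(\Gamma)-A_d(\Gamma')}_{\rm op}\le C\norm{\Gamma-\Gamma'}_F .
\]
It follows that $\abs{S-S'}+\abs{T-T'}\le C\norm{\Gamma-\Gamma'}_F\norm{W}_2$. Boundedness of $A_d$ on $\mathcal C$ also gives $\abs S+\abs T+\abs{S'}+\abs{T'}\le C\norm W_2$.

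\textbf{Step 2: the feature is locally Lipschitz.} Consider a typical coordinate
\[
 u^{\alpha_1}v^{\alpha_2}(iS)^{\gamma_1}(iT)^{\gamma_2}e^{i(uS+vT)} .
\]
I would bound its difference between $(S,T)$ and $(S',T')$ by the sum of two pieces.
\begin{itemize}
\item The difference of the monomials in $(S,T)$. Since these have degree at most three, this is bounded by $C(1+\norm W_2)^{2}\bigl(\abs{S-S'}+\abs{T-T'}\bigr)$.
\item The difference of the exponentials. Using $\abs{e^{ix}-e^{iy}}\le\abs{x-y}$, this is bounded by $(\abs u+\abs v)\bigl(\abs{S-S'}+\abs{T-T'}\bigr)$ times the monomial.
\end{itemize}
Squaring and integrating against $W_2$ costs only finite Gaussian frequency moments. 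The marginal groups in \eqref{eq:PsiFeature} are handled the same way. Together with Step 1 this gives, pointwise in the sample,
\[
 \norm{\Psi(S,T)-\Psi(S',T')}_{\mathbb H}\le C(1+\norm W_2)^{4}\norm{\Gamma-\Gamma'}_F .
\]

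\textbf{Step 3: take expectations.} Move the norm inside the expectation (Jensen, or the Bochner inequality). We need $\E(1+\norm W_2)^4\le C$ uniformly over the model. This holds because $W=B(Z_1,Z_2)^\top$ with $\norm B_{\rm op}\le C$ and $\norm{Z_j}_{\psi_2}\le K$, so all polynomial moments are uniformly bounded. This proves \eqref{eq:transformstability}.

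\textbf{Step 4: the score.} The reverse triangle inequality gives
\[
 \abs{J_{d,\Gamma}-J_{d,\Gamma'}}\le\norm{\mathcal Tm_d(\Gamma)-\mathcal Tm_d(\Gamma')}_{\mathbb H_2} .
\]
By \eqref{eq:featuremoment} and the same moment bound, $\norm{m_d(\Gamma)}_{\mathbb H}\le\E\norm{\Psi}_{\mathbb H}\le R$ for a fixed $R$, uniformly over $\Gamma\in\mathcal C$ and over the model. Applying \eqref{eq:Tlipschitz} on that ball and then \eqref{eq:transformstability} gives the second claim with constant $C(1+R)C$.

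The main obstacle is keeping track of the polynomial weights in Step 2 so that every constant depends only on the fixed constants. This comes down to uniform boundedness of $A_d$ and its derivative on $\mathcal C$, together with uniform sub-Gaussian moments of $W$, and both are already available.
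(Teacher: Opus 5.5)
Your proposal is correct and follows the paper's proof essentially step for step. Both arguments run the same chain: a mean-value Lipschitz bound for $A_d$ on the convex set $\mathcal C$, then a pointwise bound $\norm{\Psi(S,T)-\Psi(S',T')}_{\mathbb H}\le C(1+\norm{W}_2)^4\norm{\Gamma-\Gamma'}_F$, then uniform sub-Gaussian moments, and finally \eqref{eq:Tlipschitz} on a fixed ball together with the reverse triangle inequality.

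The only difference is cosmetic. You bound the feature difference directly, using $|e^{ix}-e^{iy}|\le|x-y|$ and a monomial-difference estimate. The paper instead differentiates along the segment $A_t=(1-t)A_d(\Gamma)+tA_d(\Gamma')$ and applies a Hilbert-valued fundamental theorem of calculus. Your version avoids having to verify strong $C^1$ regularity of that path and reaches the same envelope.
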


\begin{proof}
Each formula in \eqref{eq:Atransform} is $C^1$ on the positive-definite cone.
The compact convex set $\mathcal C$ lies inside that cone, so its derivatives
are bounded on a neighborhood of $\mathcal C$.  The finite-dimensional
mean-value theorem gives
\[
 \norm{A_d(\Gamma)-A_d(\Gamma')}_{\rm op}
 \le C\norm{\Gamma-\Gamma'}_F.
\]
Let $W=(X,Y)^\top$, and set
\[
 A_t=(1-t)A_d(\Gamma)+tA_d(\Gamma'),\qquad
 (S_t,T_t)^\top=A_tW,\qquad
 \dot A=A_d(\Gamma')-A_d(\Gamma).
\]
For example, differentiation of a joint component in
\eqref{eq:PsiFeature} gives
\begin{align*}
 &\quad \partial_t\left[u^{\alpha_1}v^{\alpha_2}(iS_t)^{\gamma_1}
 (iT_t)^{\gamma_2}e^{i(uS_t+vT_t)}\right]\\
 &=u^{\alpha_1}v^{\alpha_2}e^{i(uS_t+vT_t)}
 \Bigl[\gamma_1i\dot S(iS_t)^{\gamma_1-1}(iT_t)^{\gamma_2}
 +\gamma_2i\dot T(iS_t)^{\gamma_1}(iT_t)^{\gamma_2-1}
 +i(u\dot S+v\dot T)(iS_t)^{\gamma_1}(iT_t)^{\gamma_2}\Bigr],
\end{align*}
where the term multiplied by $\gamma_j$ is omitted when $\gamma_j=0$, and
$(\dot S,\dot T)^\top=\dot A W$.  The marginal components have the analogous
formula.  Since $\sup_t\norm{A_t}_{\rm op}\le C$ and
$\abs{\dot S}+\abs{\dot T}\le C\norm{\dot A}_{\rm op}\norm{W}_{\ell_2}$, Gaussian
integration over the frequency variables yields
\[
 \norm{\partial_t\Psi(S_t,T_t)}_{\mathbb H}
 \le C\norm{\dot A}_{\rm op}(1+\norm{W}_{\ell_2}^4).
\]
For each fixed $W$, dominated convergence in the Gaussian frequency
variables, using this polynomial envelope, shows that
$t\mapsto\Psi(S_t,T_t)$ is strongly $C^1$ as an $\mathbb H$-valued map, with
the derivative displayed above.
The pointwise Hilbert-valued fundamental theorem of calculus, Minkowski's
inequality, and the uniform eighth moment therefore give
\begin{align*}
 \norm{m_d(\Gamma')-m_d(\Gamma)}_{\mathbb H}
 &\le \int_0^1\E\norm{\partial_t\Psi(S_t,T_t)}_{\mathbb H}\dd t\\
 &\le C\norm{\Gamma-\Gamma'}_F,
\end{align*}
which is \eqref{eq:transformstability}. Moreover, Jensen's inequality and \eqref{eq:featuremoment} imply
\[
 \sup_{P,d,\Gamma\in\mathcal C}\norm{m_d(\Gamma)}_{\mathbb H}
 \le \sup_{P,d,\Gamma\in\mathcal C}
 \bigl(\E\norm{\Psi(A_d(\Gamma)W)}_{\mathbb H}^2\bigr)^{1/2}
 =:R_0<\infty.
\]
Applying \eqref{eq:Tlipschitz} on the radius-$R_0$ ball and then the reverse
triangle inequality proves the asserted bound for $J_{d,\Gamma}$.
\end{proof}

We can now combine covariance estimation, conditional robust mean estimation,
and the stability lemma.  The result is a uniform estimator of both
directional scores at the parametric deviation scale $\sqrt{x/n}$.

\begin{proposition}[Uniform high-probability score estimation]
\label{prop:score-estimator}
There are constants $c_0,c,C>0$ such that, for every
$8\le x\le c_0n$, there are measurable estimators
$\widehat J_{\dirf,x},\widehat J_{\dirr,x}$ satisfying
\[
 \sup_{P\in\cP_{\dirf}\cup\cP_{\dirr}}
 \Pp_P\left(
 \max_{d\in\{\dirf,\dirr\}}
 \abs{\widehat J_{d,x}-J_d(P)}>C\sqrt{x/n}
 \right)
 \le Ce^{-cx}.
\tag{9.12}\label{eq:scoreconcentration}
\]
\end{proposition}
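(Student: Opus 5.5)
The estimator uses sample splitting.  Split the $n$ observations into two halves.  The first half is used for covariance estimation and the second for feature-mean estimation.

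\textbf{Step 1: covariance estimate.}  On the first half, form a covariance estimate $\widehat\Sigma$.  Each entry of $WW^\top$, with $W=(X,Y)^\top$, is sub-exponential with $\psi_1$ norm bounded uniformly over the model, exactly as in Proposition~\ref{prop:covariance}.  Bernstein's inequality and a union bound over the three entries therefore give
\[
 \Pp\bigl(\norm{\widehat\Sigma-\Sigma(P)}_F>C\sqrt{x/n}\bigr)\le Ce^{-cx}
 \qquad(x\le c_0n).
\]
Taking $c_0$ small makes $C\sqrt{x/n}\le\lambda_-/2$.  On this event, \eqref{eq:spectralcompact} gives $\widehat\Sigma\in\mathcal C$.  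Off the event, or whenever $\widehat\Sigma\notin\mathcal C$, we may output any fixed value (say $0$); that case is already inside the failure probability.

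\textbf{Step 2: conditional feature means.}  Condition on the first half, so that $\widehat\Sigma\in\mathcal C$ is fixed.  On the second half, let $V_i:=\Psi(A_d(\widehat\Sigma)W_i)$, which are i.i.d.\ in $\mathbb H$ with mean $m_d(\widehat\Sigma)$.  The variance proxy is controlled by \eqref{eq:featuremoment}:
\[
 \E\norm{V_i}_{\mathbb H}^2\le C\bigl(1+\E\abs S^6+\E\abs T^6\bigr)\le v^2 .
\]
Here $v$ is uniform over the model and over $\Gamma\in\mathcal C$, because $\norm{A_d(\Gamma)}_{\rm op}$ is bounded on $\mathcal C$ and sixth moments of $(X,Y)$ are uniformly bounded by the $\psi_2$ bounds.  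Apply Proposition~\ref{prop:robustmean} with $N=\lfloor n/2\rfloor$ and $B=\lceil x\rceil$; the constraint $B\le N/2$ holds by shrinking $c_0$.  This yields $\widehat m_d$ with
\[
 \norm{\widehat m_d-m_d(\widehat\Sigma)}_{\mathbb H}\le Cv\sqrt{x/n}
\]
outside conditional probability $2e^{-cx}$.  Since this conditional bound holds uniformly in $\widehat\Sigma$, integrating over the first half gives the same unconditional bound.  Measurability holds because $\widehat m_B$ is a Borel function of the data and $\widehat\Sigma$ is measurable.

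\textbf{Step 3: assembling the score.}  Define $\widehat J_{d,x}:=\norm{\mathcal T\widehat m_d}_{\mathbb H_2}$.  The triangle inequality splits the error as
\[
 \abs{\widehat J_{d,x}-J_d(P)}
 \le\norm{\mathcal T\widehat m_d-\mathcal Tm_d(\widehat\Sigma)}_{\mathbb H_2}
 +\abs{J_{d,\widehat\Sigma}(P)-J_{d,\Sigma(P)}(P)} .
\]
The identity $J_{d,\Sigma(P)}(P)=J_d(P)$ holds by \eqref{eq:scoreasmap}, since $A_d(\Sigma(P))$ implements exactly the OLS pair \eqref{eq:OLSpairs}.

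For the first term, $\norm{m_d(\widehat\Sigma)}_{\mathbb H}\le R_0$ by the proof of Lemma~\ref{lem:transformstability}, and on the good event $\norm{\widehat m_d}_{\mathbb H}\le R_0+1$ because $Cv\sqrt{x/n}\le1$ after shrinking $c_0$.  The local Lipschitz bound \eqref{eq:Tlipschitz} on the radius-$(R_0+1)$ ball then gives a bound $C\sqrt{x/n}$.  The second term is at most $C\norm{\widehat\Sigma-\Sigma(P)}_F\le C\sqrt{x/n}$ by Lemma~\ref{lem:transformstability}.

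A union bound over the covariance event and the two directions finishes the proof, with all constants uniform over $\cP_{\dirf}\cup\cP_{\dirr}$.

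\textbf{Main obstacle.}  The delicate point is the conditioning argument: the robust mean bound must hold uniformly over the random transform $A_d(\widehat\Sigma)$.  This is why the variance proxy $v$ must be uniform over $\Gamma\in\mathcal C$, and why independence of the two halves is essential.  A second point to check is that $\mathcal T$ is only locally Lipschitz, so we need the a priori bound $R_0$ together with the smallness of $\sqrt{x/n}$ to keep $\widehat m_d$ inside the ball where \eqref{eq:Tlipschitz} applies.
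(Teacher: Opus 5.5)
Your proposal is correct and follows the paper's proof: sample splitting, conditional Hilbert-space median-of-means for $\E\Psi(A_d(\widehat\Sigma)W)$ on the second part, $\widehat J_{d,x}=\norm{\mathcal T\widehat m_d}_{\mathbb H_2}$, and the final bound via the local Lipschitz estimate \eqref{eq:Tlipschitz} on a fixed ball together with Lemma~\ref{lem:transformstability}. The only real difference is the covariance step. You use the empirical second-moment matrix with Bernstein's inequality and a data-driven fallback when $\widehat\Sigma\notin\mathcal C$. The paper instead uses median-of-means followed by Frobenius projection onto $\mathcal C$, which needs only fourth moments and guarantees $\widehat\Sigma\in\mathcal C$ without a fallback; both routes are valid here.
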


\begin{proof}
Split the sample into parts of sizes $n_1=\lfloor n/3\rfloor$ and
$n_2=n-n_1$, write $W=(X,Y)^\top$, and set $B_x=\lceil x\rceil$.
Choose $c_0$ small enough that
$B_x\le\min(n_1,n_2)/2$ whenever $8\le x\le c_0n$.  On the first part, apply
Proposition~\ref{prop:robustmean}, with $B=B_x$, to the
$\R^3$-valued vector $(X^2,XY,Y^2)$.  The model assumes
$\E X=\E Y=0$, so its mean is exactly the three distinct entries of
$\Sigma$; its covariance is uniformly bounded by
the fourth-moment bound.  Symmetrize the result and project it in Frobenius
norm onto $\mathcal C$.  Projection onto a closed convex set is measurable and
nonexpansive relative to the true covariance, so, with probability at least
$1-Ce^{-cx}$,
\[
 \norm{\widehat\Sigma-\Sigma}_F\le C\sqrt{x/n}.
\tag{9.13}\label{eq:SigmaConcentration}
\]
Here, on the stated nonempty range (after decreasing $c_0$ if necessary),
\[
 B_x\le x+1\le\frac98x,\qquad n_1\ge\frac n4,\qquad n_2\ge\frac n2,
\]
so Proposition~\ref{prop:robustmean} indeed gives the displayed
$\sqrt{x/n}$ radius and an $e^{-cx}$ tail.

Conditional on this first subsample, transform each observation in the
remaining subsample by $A_d(\widehat\Sigma)$, for both $d$.  Apply
Proposition~\ref{prop:robustmean} in the direct-sum Hilbert space
$\mathbb H$, again with $B=B_x$, and denote its output in direction $d$ by
$\widehat m_{d,x}$.  Independence of the two subsamples implies
that, conditionally on $\widehat\Sigma$, the remaining observations are
still i.i.d. with law $P$.  Because $\widehat\Sigma\in\mathcal C$,
$\sup_{d,\Gamma\in\mathcal C}\norm{A_d(\Gamma)}_{\rm op}<\infty$; hence
\eqref{eq:featuremoment} and the uniform sixth moment give
\[
 \E\!\left[
 \norm{\Psi(A_d(\widehat\Sigma)W)-m_d(\widehat\Sigma)}_{\mathbb H}^2
 \mid\widehat\Sigma\right]
 \le C
\]
uniformly in $P,d$, and $\widehat\Sigma$.  A union bound over the two
directions therefore gives, except on an event of conditional probability
$Ce^{-cx}$,
\[
 \max_d\norm{\widehat m_{d,x}-m_d(\widehat\Sigma)}_{\mathbb H}
 \le C\sqrt{x/n}.
\]

Define
\[
 \widehat J_{d,x}:=\norm{\mathcal T\widehat m_{d,x}}_{\mathbb H_2}.
\]
The proof of Lemma~\ref{lem:transformstability} gives
$\norm{m_d(\widehat\Sigma)}_{\mathbb H}\le R_0$.  On the median-of-means
event,
\[
 \norm{\widehat m_{d,x}}_{\mathbb H}
 \le R_0+C\sqrt{x/n}\le R_0+C\sqrt{c_0}=:R.
\]
Thus both arguments lie in a fixed radius-$R$ ball, and
\eqref{eq:Tlipschitz}, \eqref{eq:transformstability}, and
\eqref{eq:SigmaConcentration} give
\begin{align*}
 \abs{\widehat J_{d,x}-J_d(P)}
 &\le
 \abs{\widehat J_{d,x}-J_{d,\widehat\Sigma}(P)}
 +\abs{J_{d,\widehat\Sigma}(P)-J_{d,\Sigma}(P)}\\
 &\le C\norm{\widehat m_{d,x}-m_d(\widehat\Sigma)}_{\mathbb H}
      +C\norm{\widehat\Sigma-\Sigma}_F
 \le C\sqrt{x/n}.
\end{align*}
Taking expectations of the conditional failure bound and adding the
covariance-estimation failure probability proves
\eqref{eq:scoreconcentration}.  Measurability follows from the finite
tie-broken selector in Proposition~\ref{prop:robustmean} and continuous
algebraic operations.
\end{proof}

The population gap from Section~\ref{sec:popgap} and the uniform score
estimator now give the non-Gaussian branch of the test.  We combine it with
the covariance branch from Proposition~\ref{prop:covariance} according to
which signal is larger.

\begin{proposition}[Uniform directional upper bound]
\label{prop:upperrisk}
For all sufficiently small $\beta,\nu$ and every $n\ge1$, there is a
measurable rule satisfying
\[
 \mathcal R_n(\widehat d_n;\beta,\nu)
 \le C\exp\{-cn(d_\beta^2+\beta^2\nu^2)\}.
\tag{9.14}\label{eq:upperrisk}
\]
\end{proposition}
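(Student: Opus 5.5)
We combine the two branches already built. Fix $\beta\le\beta_0$, $\nu\le\nu_0$ and $n\ge1$, and compare the two known separations $d_\beta$ and $\kappa\beta\nu/2$, where $\kappa$ is the constant from Proposition~\ref{prop:population-gap}. The rule depends only on $(\beta,\nu)$ and the fixed constants, never on the data, and it switches between two tests according to which separation is larger. Since $\max\{d_\beta^2,\beta^2\nu^2\}\ge\frac12(d_\beta^2+\beta^2\nu^2)$, it suffices to show that the chosen branch has error at most $C\exp\{-cn\max(\cdot)\}$ for the larger signal.

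\emph{Covariance branch.} Suppose $d_\beta\ge\beta\nu$. Then $d_\beta>0$, and we use the sign rule \eqref{eq:hatDelta}. Proposition~\ref{prop:covariance} applies because $\beta\le\beta_0\le\min\{1,\overline a\}$ and $\cQ(K,\nu)\ne\varnothing$ for $\nu\le\nu_{\rm src}$ (Proposition~\ref{prop:nonempty-ident}; we shrink $\nu_0$ if needed). It gives worst error at most $2e^{-cnd_\beta^2}\le2\exp\{-c'n(d_\beta^2+\beta^2\nu^2)\}$.

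\emph{Independence branch.} Suppose $d_\beta<\beta\nu$. We use the estimators $\widehat J_{d,x}$ of Proposition~\ref{prop:score-estimator} and declare $\dirf$ if $\widehat J_{\dirf,x}\le\widehat J_{\dirr,x}$, and $\dirr$ otherwise. Take $x:=c_1n\beta^2\nu^2$ with $c_1$ small enough that $C\sqrt{x/n}=C\sqrt{c_1}\,\beta\nu<\kappa\beta\nu/2$.

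Under a forward law, Proposition~\ref{prop:population-gap} gives $J_{\dirf}=0$ and $J_{\dirr}\ge\kappa\beta\nu$. On the good event of \eqref{eq:scoreconcentration}, each estimator lies within $\kappa\beta\nu/2$ of its population value, so no error occurs. The reverse case is symmetric. The error is therefore at most $Ce^{-cx}=C\exp\{-cc_1n\beta^2\nu^2\}$, and the right-hand side is $\le C\exp\{-c''n(d_\beta^2+\beta^2\nu^2)\}$.

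\emph{Range conditions on $x$.} This is the only real obstacle: Proposition~\ref{prop:score-estimator} requires $8\le x\le c_0n$.

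The upper constraint $x\le c_0 n$ holds once $c_1\beta_0^2\nu_0^2\le c_0$, so we shrink $c_1$, $\beta_0$, $\nu_0$ as needed.

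If $x<8$, then $n\beta^2\nu^2<8/c_1$, and the target bound $C\exp\{-cn(d_\beta^2+\beta^2\nu^2)\}\ge Ce^{-2c\cdot8/c_1}$ exceeds $1$ once $C$ is chosen large relative to $c$. Any rule, for instance always answering $\dirf$, then satisfies the bound trivially. The same trivial argument covers small $n$ in the covariance branch, so we never need $n$ large explicitly.

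Finally, measurability is inherited from the two components, since the branch choice is deterministic given $(\beta,\nu)$. The constants $c$ and $C$ depend only on the fixed constants in \eqref{eq:fixedconstants}, through $\kappa$, $c_0$ and the Bernstein constant.
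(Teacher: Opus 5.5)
Your proposal is correct and follows essentially the same route as the paper's proof. You split on $d_\beta\ge\beta\nu$ versus $d_\beta<\beta\nu$, use the covariance sign rule in the first case and the smaller-score rule with $x\asymp n\beta^2\nu^2$ in the second, absorb the $x<8$ regime into the prefactor via $n(d_\beta^2+\beta^2\nu^2)<2n\beta^2\nu^2$, and finish with $\max\{d_\beta^2,\beta^2\nu^2\}\asymp d_\beta^2+\beta^2\nu^2$. One small inconsistency: your opening mentions comparing $d_\beta$ with $\kappa\beta\nu/2$, but the branches you actually carry out use the threshold $\beta\nu$. This is harmless, since either threshold works up to constants.
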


The parameters $(\beta,\nu)$ and the fixed nuisance bounds specify the
minimax class and are available to the decision rule.  The rule may use them
to choose between the covariance statistic and the independence score and to
set the deviation parameter $x$.

\begin{proof}
First suppose $d_\beta<\beta\nu$.  Choose the fixed constant $b>0$ small
enough that $C\sqrt b\le\kappa/3$, where $C$ is the constant in
\eqref{eq:scoreconcentration} and $\kappa$ is the gap constant in
\eqref{eq:scoregap}.  Set $x=bn\beta^2\nu^2$.  By reducing the fixed upper
ranges $\beta_0,\nu_0$ if necessary, also arrange
$b\beta_0^2\nu_0^2\le c_0$, where $c_0$ is from
Proposition~\ref{prop:score-estimator}.  Thus, whenever
$n\beta^2\nu^2\ge 8/b$, the choice
$x=bn\beta^2\nu^2$ lies in $[8,c_0n]$.  Construct the corresponding
$x$-indexed estimators from
Proposition~\ref{prop:score-estimator}, and choose the direction with the
smaller value of $\widehat J_{d,x}$.
Then the estimation error in \eqref{eq:scoreconcentration} is at most one
third of the gap in \eqref{eq:scoregap}, except with probability
$Ce^{-cn\beta^2\nu^2}$.  If $n\beta^2\nu^2<8/b$, an arbitrary tie
break has risk at most one.  In this case, because $d_\beta<\beta\nu$,
\[
 n(d_\beta^2+\beta^2\nu^2)<16/b.
\]
Thus, after choosing the prefactor in \eqref{eq:upperrisk} at least
$\exp(16c/b)$, its right-hand side is at least one and also bounds this
small-sample branch.

If $d_\beta\ge\beta\nu$, use the covariance sign rule of
Proposition~\ref{prop:covariance}.  In the two cases the exponent is,
respectively, at least a constant times $n\beta^2\nu^2$ and $nd_\beta^2$.
Since
\[
 \max\{d_\beta^2,\beta^2\nu^2\}
 \le d_\beta^2+\beta^2\nu^2
 \le2\max\{d_\beta^2,\beta^2\nu^2\},
\]
this proves \eqref{eq:upperrisk}.
\end{proof}

Proposition~\ref{prop:upperrisk} implies the upper bound in
Theorem~\ref{thm:main}.  Write \eqref{eq:upperrisk} as
$A\exp(-an\xi)$, where $\xi=d_\beta^2+\beta^2\nu^2$, and choose
\[
 C_0\ge\frac1a\left(1+\frac{\log A}{\log(1/\delta_0)}\right).
\]
Then $n\ge C_0\log(1/\delta)/\xi$ implies
$A e^{-an\xi}\le A\delta^{aC_0}\le\delta$ for every
$\delta\le\delta_0$.  Thus it remains only to handle the integer sample size
\[
 n=\left\lceil
 C_0\frac{\log(1/\delta)}{\xi}
 \right\rceil
\]
appearing in the definition of $N_2^\star$.  Shrink the parameter ranges so
$\xi\le1$ and take $\delta_0\le e^{-1}$.  Then
\[
 \left\lceil C_0\frac{\log(1/\delta)}{\xi}\right\rceil
 \le (C_0+1)\frac{\log(1/\delta)}{\xi},
\]
which gives the stated sample-complexity upper bound.

\section{Matching two-point lower bound}
\label{sec:lower}

Proposition~\ref{prop:upperrisk} completes the upper bound.  To show that its
two signals cannot be improved, we now construct one admissible forward law
and one admissible reverse law separated by squared Hellinger distance at most
$C(d_\beta^2+\beta^2\nu^2)$.  The construction splits their discrepancy into
a rotation of size $\beta\nu$ and a nonorthogonal matrix perturbation of size
$d_\beta$.

\begin{proposition}[An admissible least-favorable forward--reverse pair]
\label{prop:hardpair}
There are $\beta_{\rm lb},\nu_{\rm lb}>0$ and $C<\infty$, depending only on
the fixed constants, such that, for every
$0<\beta\le\beta_{\rm lb}$ and $0<\nu\le\nu_{\rm lb}$, there exist
$P_{\rm f}\in\cP_{\dirf}(\beta,\nu)$ and
$P_{\rm r}\in\cP_{\dirr}(\beta,\nu)$ satisfying
\[
 H^2(P_{\rm f},P_{\rm r})
 \le C\{d_\beta^2+\beta^2\nu^2\}.
\tag{10.1}\label{eq:hardHellinger}
\]
Here, for densities $p,q$ with respect to a common dominating measure,
\[
 H^2(P,Q):=1-\int\sqrt{pq}
 =\frac12\norm{\sqrt p-\sqrt q}_2^2.
\]
\end{proposition}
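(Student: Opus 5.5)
Both laws will be built from the same product source density $\pi_\eps=p_\eps\otimes p_\eps$, using the admissible path \eqref{eq:sourcepath} of Proposition~\ref{prop:nonempty-ident} with $\eps=\nu/A_\circ$. Then $\NonG_w=\nu$ exactly, the $\psi_2$ norm equals $K_{\rm G}<K$, and both sources are centered with unit variance. Take the forward law as the law of $W_{\rm f}=B_{\rm f}Z$, $Z\sim\pi_\eps$, with $B_{\rm f}=\bigl(\begin{smallmatrix}\sigma_1&0\\ \beta\sigma_1&\sigma_2\end{smallmatrix}\bigr)$. Take the reverse law as the law of $W_{\rm r}=B_{\rm r}U$, $U\sim\pi_\eps$, with $B_{\rm r}=\bigl(\begin{smallmatrix}\tau_1&b\tau_2\\0&\tau_2\end{smallmatrix}\bigr)$, source order $(U_1,U_2)$, and $\abs b\ge\beta$. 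The scales and $b$ are chosen so that $\Sigma_{\rm r}$ is as close to $\Sigma_{\rm f}$ as the scale box allows. When $\beta^2\le1-\rho$, Proposition~\ref{prop:covariance} gives exactly matching covariances: $\alpha=q=\beta$, $s=u=H$, $t=v=H(1-\beta^2)$. When $d_\beta>0$, take the boundary choice $s=u=H$, $t=v=L$, $\abs a=\abs b=\beta$. A direct computation from \eqref{eq:covequality} then shows that the covariance mismatch is $\norm{\Sigma_{\rm f}-\Sigma_{\rm r}}_F\le Cd_\beta$.

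Reduce to a comparison of source laws. Write $B_{\rm r}=B_{\rm f}R$ with $R=B_{\rm f}^{-1}B_{\rm r}$, so that $P_{\rm r}$ is the image under $B_{\rm f}$ of the law of $RU$. Since Hellinger distance is invariant under the common bijection $B_{\rm f}$,
\[
 H^2(P_{\rm f},P_{\rm r})=H^2\bigl(\pi_\eps,\ \mathcal L(RU)\bigr).
\]
Factor $R=QE$, where $Q$ is the orthogonal (polar) factor and $E$ is symmetric positive definite. Then $RR^\top=B_{\rm f}^{-1}\Sigma_{\rm r}B_{\rm f}^{-\top}$, so $\norm{E-I}\le C\norm{\Sigma_{\rm r}-\Sigma_{\rm f}}\le Cd_\beta$. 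The orthogonal part $Q$ is, up to a coordinate swap and reflections, exactly the wrong-direction rotation \eqref{eq:wrongrotation}. Its angle satisfies $\abs{\sin\theta}\le C\beta$, because both $\beta$ and $b$ are $O(\beta)$ and the matrices differ from a permutation of $I$ by $O(\beta)$. Here I will choose the source ordering of the reverse model so that $Q$ is near the identity rather than near the swap; the labeling of $U_1,U_2$ is free. By the triangle inequality for the Hellinger metric $H$,
\[
 H(\pi_\eps,\mathcal L(QEU))\le H\bigl(\pi_\eps,\mathcal L(QU)\bigr)+H\bigl(\mathcal L(QU),\mathcal L(QEU)\bigr).
\]

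The second term equals $H(\pi_\eps,\mathcal L(EU))$. Because $\pi_\eps\ge\tfrac12\phi_2$ and $\pi_\eps\le\tfrac32\phi_2$, with smooth density ratio $r_\eps$, a linear map within $O(d_\beta)$ of the identity changes the density by $O(d_\beta)$ in $L^2(\phi_2^{-1})$. This follows by differentiating $\det(E)^{-1}\pi_\eps(E^{-1}x)$ in $E$ and using Gaussian moments together with the boundedness of $h_\circ,h_\circ'$. Hence the squared second term is at most $Cd_\beta^2$.

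For the first term, which I expect to be the main point, I use the fact that $\phi_2$ is rotation invariant. This gives
\[
 \pi_\eps(Q^{-1}x)-\pi_\eps(x)=\phi_2(x)\bigl[r_\eps(Q^{-1}x)-r_\eps(x)\bigr],\qquad r_\eps=(1+\eps h_\circ)\otimes(1+\eps h_\circ).
\]
The bracket is $\eps$ times a difference of bounded smooth functions evaluated at points differing by a rotation of angle $O(\beta)$, plus $\eps^2$ terms of the same type. Its magnitude is therefore at most $C\eps\abs\theta(1+\norm x)$, since $h_\circ'$ is bounded. With $\pi_\eps\ge\phi_2/2$, the bound $H^2\le\tfrac12\int(p-q)^2/p$ then yields $H^2\le C\eps^2\theta^2\le C\beta^2\nu^2$. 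Combining the two terms gives \eqref{eq:hardHellinger}.

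Two details need care: the source reordering that makes $Q$ close to the identity, and the constraint $\abs b\le\overline a$. For the latter I simply take $\beta_{\rm lb}\le\min\{\overline a,\tfrac12\}$, and $\nu_{\rm lb}\le\nu_{\rm src}$ so that the source path is admissible.
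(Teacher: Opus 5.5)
Your proof is correct and is essentially the paper's argument. You use the same hard pair: your matched and boundary scale choices are exactly $\zeta_\beta=\max\{\rho,1-\beta^2\}$, and your $R=B_{\rm f}^{-1}B_{\rm r}$ is the inverse of the paper's $T=A_{\rm r}^{-1}A_{\rm f}$. You split the Hellinger distance by the triangle inequality into an $O(\beta)$ rotation, controlled by rotation invariance of $\phi_2$ and $\sup\norm{\nabla r_\eps}\le C\eps$, plus an $O(d_\beta)$ strain, controlled by differentiating the pushed-forward density along a linear path.

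The differences are cosmetic:
\begin{itemize}
\item You isolate the strain with the polar factor $E$ and bound $\norm{E-I}$ by the covariance mismatch, where the paper compares entries to get $\norm{T-R_\beta}_F\le Cd_\beta$.
\item You use a $\chi^2$ bound where the paper differentiates the square-root density. Your version needs $E$ close enough to $I$ that $\phi_2(E^{-1}x)^2/\phi_2(x)$ is integrable, which holds because $d_\beta\le\beta^2$ is small.
\item The product density ratio lies in $[1/4,9/4]$, not $[1/2,3/2]$; this changes only constants.
\end{itemize}
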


\begin{proof}
Use the source path \eqref{eq:sourcepath} with
\[
 \eps:=\nu/A_\circ.
\tag{10.2}\label{eq:epsilonnu}
\]
By \eqref{eq:pathpsi}--\eqref{eq:pathNG}, this source belongs to
$\cQ(K,\nu)$ when $\nu\le\nu_{\rm src}$.  Let
\[
 \pi_\eps(z):=p_\eps(z_1)p_\eps(z_2)
\]
be the density of two independent copies; we use the same symbol for the
induced probability law when writing push-forwards and Hellinger distances.
Put
\[
 \zeta_\beta:=\max\{\rho,1-\beta^2\},
 \qquad d_\beta=\beta^2+\zeta_\beta-1.
\tag{10.3}\label{eq:zetabeta}
\]
Since a minimax lower bound requires only one forward--reverse pair, it
suffices to use the positive edge sign.  Define
\[
 A_{\rm f}:=\overline\sigma
 \begin{pmatrix}1&0\\ \beta&\sqrt{\zeta_\beta}\end{pmatrix},
 \qquad
 A_{\rm r}:=\overline\sigma
 \begin{pmatrix}\sqrt{\zeta_\beta}&\beta\\0&1\end{pmatrix}.
\tag{10.4}\label{eq:hardmatrices}
\]
For a measurable map $A$, write $A_\#P$ for the push-forward of $P$, and set
\[
 P_{\rm f}:=(A_{\rm f})_\#\pi_\eps,
 \qquad
 P_{\rm r}:=(A_{\rm r})_\#\pi_\eps.
\]
The law $P_{\rm f}$ is the forward model with
\[
 a=\beta,\qquad \sigma_1=\overline\sigma,\qquad
 \sigma_2=\overline\sigma\sqrt{\zeta_\beta};
\]
whereas $P_{\rm r}$ is the reverse model with
\[
 b=\beta,\qquad \tau_2=\overline\sigma,\qquad
 \tau_1=\overline\sigma\sqrt{\zeta_\beta}.
\]
Since $\zeta_\beta\in[\rho,1]$, all four scales satisfy the required bounds.
Hellinger distance is invariant under a common
invertible transformation.  After applying $A_{\rm r}^{-1}$, they become
$T_\#\pi_\eps$ and $\pi_\eps$, where
\[
 T:=A_{\rm r}^{-1}A_{\rm f}
 =\begin{pmatrix}
 (1-\beta^2)/\sqrt{\zeta_\beta}&-\beta\\
 \beta&\sqrt{\zeta_\beta}
 \end{pmatrix},
 \qquad \det T=1.
\tag{10.5}\label{eq:Tmatrix}
\]
For $\vartheta\in\R$, write
\[
 \mathcal R_\vartheta
 :=\begin{pmatrix}\cos\vartheta&-\sin\vartheta\\
                    \sin\vartheta&\cos\vartheta\end{pmatrix},
 \qquad \vartheta_\beta:=\arcsin\beta.
\]
Then
\[
 R_\beta:=\mathcal R_{\vartheta_\beta}
 =\begin{pmatrix}
 \sqrt{1-\beta^2}&-\beta\\
 \beta&\sqrt{1-\beta^2}
 \end{pmatrix}.
\tag{10.6}\label{eq:Rbeta}
\]
Put $q_\beta:=\sqrt{1-\beta^2}$.  Because
$\zeta_\beta=q_\beta^2+d_\beta$, $q_\beta\ge\sqrt3/2$, and
$\sqrt{\zeta_\beta}\ge\sqrt\rho$,
\[
 \abs{\sqrt{\zeta_\beta}-q_\beta}
 =\frac{d_\beta}{\sqrt{\zeta_\beta}+q_\beta}\le Cd_\beta,
 \qquad
 \abs{\frac{q_\beta^2}{\sqrt{\zeta_\beta}}-q_\beta}
 =\frac{q_\beta}{\sqrt{\zeta_\beta}}
   \abs{q_\beta-\sqrt{\zeta_\beta}}
 \le Cd_\beta.
\]
Comparing the four entries in \eqref{eq:Tmatrix} and \eqref{eq:Rbeta}
therefore gives
\[
 \norm{T-R_\beta}_F\le C d_\beta.
\tag{10.7}\label{eq:strainmatrix}
\]

Let $\phi_2(z):=\phi(z_1)\phi(z_2)$ be the standard bivariate Gaussian
density, and write $\pi_\eps(z)=\phi_2(z)r_\eps(z)$, where
\[
 r_\eps(z)=\{1+\eps h_\circ(z_1)\}
             \{1+\eps h_\circ(z_2)\}.
\tag{10.8}\label{eq:qepsilon}
\]
The density ratio $r_\eps$ lies in $[1/4,9/4]$, and
$\sup_{z\in\R^2}\norm{\nabla r_\eps(z)}_{\ell_2}\le C\abs\eps$.
Rotational invariance of $\phi_2$
and the elementary inequality
$\abs{\sqrt x-\sqrt y}\le C\abs{x-y}$ on $[1/4,9/4]$ imply
\begin{align}
 H^2(\pi_\eps,(R_\beta)_\#\pi_\eps)
 &\le C\int\phi_2(z)
 \abs{r_\eps(z)-r_\eps(R_\beta^{-1}z)}^2\dd z\notag\\
 &\le C\beta^2\eps^2.
\tag{10.9}\label{eq:rotationH}
\end{align}
For the second inequality, let
$J=\left(\begin{smallmatrix}0&-1\\1&0\end{smallmatrix}\right)$.  The ordinary
fundamental theorem of calculus and the preceding uniform gradient bound
give, for every $z\in\R^2$,
\begin{align*}
 \abs{r_\eps(z)-r_\eps(R_\beta^{-1}z)}
 &\le\int_0^{\abs{\vartheta_\beta}}
 \abs{\ip{\nabla r_\eps(\mathcal R_{-t}z),
                    J\mathcal R_{-t}z}}\dd t\\
 &\le C\abs\eps\,\abs{\vartheta_\beta}\norm{z}_{\ell_2}
 \le C\abs\eps\,\beta\norm{z}_{\ell_2}.
\end{align*}
Squaring and integrating this bound against $\phi_2$ proves
\eqref{eq:rotationH}.

It remains to control the nonorthogonal perturbation from $R_\beta$ to $T$.
Set $B:=T-R_\beta$ and
$S_t=R_\beta+tB$, $0\le t\le1$.  By \eqref{eq:strainmatrix} and
$d_\beta\le\beta^2$,
\[
 \norm{S_t-R_\beta}_{\rm op}\le Cd_\beta\le C\beta^2.
\]
Choose $\beta_{\rm lb}$ so that the last quantity is at most $1/2$.
Since $R_\beta$ is orthogonal, every singular value of $S_t$ then lies in
$[1/2,3/2]$; in particular, every $S_t$ is invertible and
$\norm{S_t}_{\rm op}+\norm{S_t^{-1}}_{\rm op}\le C$.  The square root
of the density of $(S_t)_\#\pi_\eps$ is
\[
 G_t(x)=\abs{\det S_t}^{-1/2}
 \sqrt{\pi_\eps(S_t^{-1}x)}.
\tag{10.10}\label{eq:Gt}
\]
Put $A_t=S_t^{-1}B$ and $y=S_t^{-1}x$.
Differentiation gives the exact identity
\[
 \dot G_t(x)=-\frac12G_t(x)
 \left\{\tr A_t+\ip{\nabla\log \pi_\eps(y),A_ty}\right\}.
\tag{10.11}\label{eq:Gtdot}
\]
After the change of variables $x=S_ty$,
\begin{align}
 \norm{\dot G_t}_2^2
 &=\frac14\int \pi_\eps(y)
 \left\{\tr A_t+\ip{\nabla\log \pi_\eps(y),A_ty}\right\}^2\dd y
 \le C\norm{B}_F^2.
\tag{10.12}\label{eq:Gtdotbound}
\end{align}
Indeed,
\[
 \partial_j\log \pi_\eps(y)
 =-y_j+\frac{\eps h_\circ'(y_j)}{1+\eps h_\circ(y_j)},
\]
whose magnitude is at most $\abs{y_j}+C$ uniformly over the permitted $\eps$; also
$\pi_\eps\le(9/4)\phi_2$ and
$\norm{A_t}_F\le C\norm{B}_F$.  Gaussian fourth
moments then prove \eqref{eq:Gtdotbound}.  For every $x$, the map
$t\mapsto G_t(x)$ is $C^1$ with derivative \eqref{eq:Gtdot}; moreover,
\eqref{eq:Gtdotbound} gives
$\int_0^1\norm{\dot G_t}_2\dd t\le C\norm{B}_F$.  Hence the pointwise identity
$G_1-G_0=\int_0^1\dot G_t\dd t$ and Minkowski's integral inequality imply
\[
 \norm{G_1-G_0}_2
 \le\int_0^1\norm{\dot G_t}_2\dd t
 \le C\norm{B}_F.
\]
Together with \eqref{eq:strainmatrix} and the definition of Hellinger
distance, this gives
\[
 H((R_\beta)_\#\pi_\eps,T_\#\pi_\eps)
 =\frac1{\sqrt2}\norm{G_1-G_0}_2
 \le Cd_\beta.
\tag{10.13}\label{eq:strainH}
\]

The Hellinger triangle inequality, \eqref{eq:rotationH}, and
\eqref{eq:strainH} now yield
\[
 H(\pi_\eps,T_\#\pi_\eps)
 \le C(d_\beta+\beta\abs\eps).
\]
Square this inequality, use \eqref{eq:epsilonnu}, and absorb the fixed
$A_\circ$ into the constant to obtain \eqref{eq:hardHellinger}.
\end{proof}

Proposition~\ref{prop:hardpair} supplies a forward--reverse pair with the
required one-sample Hellinger separation.  Tensorization of Hellinger
affinity now converts that pair into a lower bound for every $n$-sample
directional test.

\begin{proposition}[Testing consequence]
\label{prop:testinglower}
There are fixed $c,C>0$ such that every possibly randomized decision rule
satisfies
\[
 \mathcal R_n(\widehat d_n;\beta,\nu)
 \ge\frac14\exp\{-Cn(d_\beta^2+\beta^2\nu^2)\}
\tag{10.14}\label{eq:testinglower}
\]
throughout a sufficiently small fixed $(\beta,\nu)$ range.
\end{proposition}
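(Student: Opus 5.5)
The plan is to reduce the composite minimax risk to a simple binary test between the two laws $P_{\rm f}\in\cP_{\dirf}(\beta,\nu)$ and $P_{\rm r}\in\cP_{\dirr}(\beta,\nu)$ supplied by Proposition~\ref{prop:hardpair}, and then apply the standard Le Cam--Hellinger affinity bound. Restrict $(\beta,\nu)$ to $0<\beta\le\beta_{\rm lb}$, $0<\nu\le\nu_{\rm lb}$ (also $\nu\le\nu_{\rm src}$ so the class is nonempty). For any possibly randomized rule $\widehat d_n$, write $\psi=\Pp(\widehat d_n=\dirr\mid\text{data})\in[0,1]$. Then
\[
 \mathcal R_n(\widehat d_n;\beta,\nu)
 \ge\max\{\E_{P_{\rm f}^{\otimes n}}\psi,\ \E_{P_{\rm r}^{\otimes n}}(1-\psi)\}
 \ge\frac12\bigl\{\E_{P_{\rm f}^{\otimes n}}\psi+\E_{P_{\rm r}^{\otimes n}}(1-\psi)\bigr\}.
\]
So the composite suprema in \eqref{eq:riskrule} only help.

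Next I would lower bound the sum of the two error probabilities. With densities $p,q$ of the product laws, $\int\min(p,q)\ge\frac12(\int\sqrt{pq})^2$ by Cauchy--Schwarz, because $\int\sqrt{pq}=\int\sqrt{\min\max}\le(\int\min)^{1/2}(\int\max)^{1/2}$ and $\int\max\le2$. Also $\E_p\psi+\E_q(1-\psi)\ge\int\min(p,q)$. Hellinger affinity tensorizes:
\[
 \int\sqrt{p^{\otimes n}q^{\otimes n}}=(1-H^2(P_{\rm f},P_{\rm r}))^n .
\]
Combining these gives $\mathcal R_n\ge\frac14(1-H^2)^{2n}$.

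The remaining step converts the power into an exponential. By \eqref{eq:hardHellinger}, $H^2\le C_1(d_\beta^2+\beta^2\nu^2)\le C_1(\beta^4+\beta^2\nu^2)$, using $d_\beta\le\beta^2$. Shrinking the ranges of $\beta,\nu$ ensures $H^2\le1/2$. Then $\log(1-x)\ge-2x$ on $[0,1/2]$ gives
\[
 (1-H^2)^{2n}\ge e^{-4nH^2}\ge\exp\{-4C_1n(d_\beta^2+\beta^2\nu^2)\},
\]
which is \eqref{eq:testinglower} with $C=4C_1$.

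There is no real obstacle here. The analytic work lies in Proposition~\ref{prop:hardpair}. The only points needing care are that randomized rules are handled through $\psi$, and that the range restriction keeps $H^2$ below $1/2$ so the logarithm inequality applies.
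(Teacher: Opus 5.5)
Your proposal is correct and is essentially the paper's argument. You restrict to the pair from Proposition~\ref{prop:hardpair}, tensorize the Hellinger affinity, shrink the $(\beta,\nu)$ range so that $H^2(P_{\rm f},P_{\rm r})\le 1/2$, and bound the maximum error below by $\tfrac14\Aff^2$. The only differences are cosmetic: you derive $\int\min(p,q)\ge\tfrac12\Aff^2$ directly by Cauchy--Schwarz rather than via $\TV\le\sqrt{1-\Aff^2}$, and you handle randomization through the conditional decision probability $\psi$ rather than by adjoining a common random seed.
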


\begin{proof}
Let $P_{\rm f},P_{\rm r}$ be the pair in
Proposition~\ref{prop:hardpair}, and put
$\xi=d_\beta^2+\beta^2\nu^2$.  By decreasing the fixed upper ranges, assume
$C\xi\le1/2$.  The affinity
$\Aff(P,Q)=\int\sqrt{\dd P\dd Q}$ then satisfies
\[
 \Aff(P_{\rm f},P_{\rm r})=1-H^2(P_{\rm f},P_{\rm r})
 \ge1-C\xi\ge e^{-2C\xi}.
\tag{10.15}\label{eq:affinityone}
\]
Affinity tensorizes, so the $n$-sample affinity is at least $e^{-2Cn\xi}$.
For any two laws,
\[
 \TV(P,Q)\le\sqrt{1-\Aff(P,Q)^2},
 \qquad
 1-\TV(P,Q)\ge\frac12\Aff(P,Q)^2.
\tag{10.16}\label{eq:TVaffinity}
\]
The sum of the two testing errors is at least $1-\TV$, and hence their
maximum is at least $\Aff^2/4$.  Applying this to the product laws proves
\eqref{eq:testinglower}, after changing the numerical constant in the
exponent.  Randomization is covered by adjoining the same independent random
seed to both experiments, which does not change affinity.
\end{proof}

\section{Proof of the main theorem and interpretation}

The preceding two sections have established matching exponential upper and
lower risk bounds.  It remains only to choose common small parameter ranges
and translate those risk bounds into the sample-complexity statement.

\begin{proof}[Proof of Theorem~\ref{thm:main}]
Choose $\beta_0$ no larger than every fixed small upper bound required in
Propositions~\ref{prop:upperrisk} and \ref{prop:hardpair}, and choose
$\nu_0\le\min\{\nu_{\rm src},\nu_{\rm lb}\}$.  The upper bound follows from
Proposition~\ref{prop:upperrisk} as explained after its proof.

For the lower bound, take the infimum over all decision rules in
Proposition~\ref{prop:testinglower}; no minimizer need exist.  This gives
\[
 \mathcal R_{2,n}^\star(\beta,\nu)
 \ge\frac14e^{-Cn(d_\beta^2+\beta^2\nu^2)}.
\]
Choose $\delta_0\le1/16$.  Then
for every $n$ such that $\mathcal R_{2,n}^\star\le\delta$,
$\log(1/(4\delta))\ge\frac12\log(1/\delta)$ and rearrangement gives
\[
 n\ge c\frac{\log(1/\delta)}{d_\beta^2+\beta^2\nu^2}.
\]
Taking the infimum over such sample sizes proves the lower bound and completes
the theorem.
\end{proof}

\paragraph{Coverage of degenerate moment patterns.}
The upper bound never assumes a density.  The smooth density
\eqref{eq:sourcepath} is used only to exhibit a least-favorable lower pair.
Accordingly, the lower bound proves non-improvability along one admissible
source direction; uniformity over all directions in the nonparametric source
class is supplied by the upper bound.
Nor can the theorem be reduced to a fixed low-order cumulant.  For example,
\[
 \Pp(Z=0)=\frac23,\qquad
 \Pp(Z=\sqrt3)=\Pp(Z=-\sqrt3)=\frac16
\]
has mean zero, variance one, skewness zero, fourth moment three, and
$\norm Z_{\psi_2}=\sqrt{3/\log4}<K_{\rm G}$.  Its characteristic function is
$f_Z(t)=2/3+(1/3)\cos(\sqrt3t)$, so direct Gaussian integration gives
\begin{align*}
 \NonG_w(Z)^2
 &=\frac12+\frac49e^{-3/4}+\frac1{18}e^{-3}
   -2\sqrt{\frac23}\left(\frac23+\frac13e^{-1/2}\right)
   +\frac1{\sqrt2}\\
 &>9.9\times10^{-4}.
\end{align*}
Thus this law belongs to $\cQ(K,\nu)$ for every
$\nu\le0.031$.  The Hermite argument in Theorem~\ref{thm:maxwell} treats all
orders simultaneously.

\paragraph{Why the scale phase is unavoidable.}
When $\underline\sigma<\overline\sigma$ and
$\beta^2\le1-\rho$, Proposition~\ref{prop:covariance} shows that the forward
and reverse covariance classes intersect.  The hard matrices
\eqref{eq:hardmatrices} realize this intersection with identical covariance
when $d_\beta=0$, so covariance then contains no information for
distinguishing that forward--reverse pair.  When the scale interval is a
singleton, covariance remains directional even at the Gaussian limit.
Together, the covariance separation calculation and the matching hard pair
show that $d_\beta$ is the parameter governing this transition.

\end{document}